\documentclass[11pt, a4paper]{article}
\voffset=-2.15cm \hoffset=-1.50cm \textheight=24.80cm \textwidth=15.7cm
\usepackage{amsmath,amsthm,amssymb,amsbsy,upref,color,graphicx,amscd,hyperref}
\usepackage[active]{srcltx}
\usepackage[latin1]{inputenc}
\usepackage{amsmath}
\usepackage{amsfonts}
\usepackage{amssymb}
\usepackage{makeidx}
\usepackage{enumerate}

\def\ds{\displaystyle}
\def\Ss{H^{cp}}
\def\E{\mathcal{E}_X}
\def\eps{{\varepsilon}}
\def\N{\mathbb{N}}
\def\R{\mathbb{R}}
\def\A{\mathcal{A}}
\def\B{\mathcal{B}}
\def\EE{\mathcal{E}}

\def\HH{\mathcal{H}}
\def\LL{\mathcal{L}}
\def\L{\mathcal{L}}

\def\O{\Omega}

\def\<{\langle}
\def\>{\rangle}

\def\m{m}
\def\Dr{\mathcal{D}}
\newcommand{\be}{\begin{equation}}
\newcommand{\ee}{\end{equation}}
\newcommand{\bib}[4]{\bibitem{#1}{\sc#2: }{\it#3. }{#4.}}

\newcommand{\cp}{\mathop{\rm cap}\nolimits}

\numberwithin{equation}{section}
\theoremstyle{plain}
\newtheorem{teo}{Theorem}[section]
\newtheorem{lemma}[teo]{Lemma}
\newtheorem{cor}[teo]{Corollary}
\newtheorem{prop}[teo]{Proposition}
\newtheorem{deff}[teo]{Definition}
\theoremstyle{remark}
\newtheorem{oss}[teo]{Remark}

\newenvironment{ack}{{\bf Acknowledgements.}}

\title{Shape optimization problems on metric measure spaces}

\author{Giuseppe Buttazzo, Bozhidar Velichkov}

\begin{document}
\maketitle

\begin{abstract}
We consider shape optimization problems of the form
$$\min\big\{J(\O)\ :\ \O\subset X,\ m(\O)\le c\big\},$$
where $X$ is a metric measure space and $J$ is a suitable shape functional. We adapt the notions of $\gamma$-convergence and weak $\gamma$-convergence to this new general abstract setting to prove the existence of an optimal domain. Several examples are pointed out and discussed.
\end{abstract}

\textbf{Keywords:} shape optimization, metric spaces, capacity, eigenvalues, Sobolev spaces

\textbf{2010 Mathematics Subject Classification:} 49J45, 49R05, 35P15, 47A75, 35J25, 46E35, 35R03, 30L99

%%%%%%%%%%%%%%%%%%%%%%%%%%%%%%%%%%%%%%%%%%%%%%%%%%
\section{Introduction}\label{s0}

Shape optimization problems, though a classical research field starting with isoperimetric type problems, received a lot of attention from the mathematical community in the recent years, especially for their applications to Mechanics and Engineering. In particular {\it spectral optimization problems}, where one is interested in minimizing some suitable function of the spectrum of a differential operator under various types of constraints, have been widely investigated. We refer for instance to the monographs \cite{book, henrot, pierrehenrot} and to the survey paper \cite{buremc}, where the state of the art is described, together with some problems that are still open. We also refer to \cite{bubuve12} for a different type of problems, where internal obstacles are considered.

The ambient space for shape optimization problems in the literature is usually the Euclidean space $\R^d$, or sometimes a smooth Riemannian manifold (as for instance in \cite{sicbaldi}). Some examples that we illustrate in Section \ref{s6} however require a more general framework; this is for instance the case when one looks for optimal domains in a Finsler space, in a Carnot-Carath\'eodory space, or in an infinite dimensional Gaussian space.

In the present paper we consider the very general framework of metric measure spaces and we show that, under suitable conditions, spectral optimization problems admit an optimal domain as a solution. The spectrum we consider is the one of the {\it metric Laplacian}, which requires the definition of the related Sobolev spaces; the key assumption we make on the metric measure space $X$ to develop our theory is the compact embedding of the Sobolev space $H^1(X,m)$ into $L^2(X,m)$, which is satisfied in all the examples which motivated our study.

In Section \ref{s1} we recall the theory of Sobolev spaces over a metric measure space, following the approach introduced in \cite{cheeger}. In Section \ref{s2} we study boundary value problems for the metric Laplacian, together with their properties. In Section \ref{s3} we give our main existence theorem and in Section \ref{s6} we show how some interesting examples fall into our framework. Section \ref{s5} contains an abstract theory of capacity in metric measure space that could be used as an alternative approach.

%%%%%%%%%%%%%%%%%%%%%%%%%%%%%%%%%%%%%%%%%%%%%%%%%%
\section{Sobolev spaces on metric measure spaces}\label{s1}

We work in a separable metric space $(X,d)$ endowed with a finite regular Borel measure $m$ such that every open set has a non-zero measure.

\begin{deff}\label{s1d1}
Let $u:X\to\overline\R$ be a measurable function. An upper gradient $g$ for $u$ is a Borel function $g:X\to[0,+\infty]$, such that for all points $x_1,x_2\in X$ and all continuous rectifiable curves, $c:[0,l]\to X$ parametrized by arc-length, with $c(0)=x_1$, $c(l)=x_2$, we have 
$$|u(x_2)-u(x_1)|\le\int_0^l g(c(s))ds,$$
where the left hand side is intended as $+\infty$ if $|u(x_{1})|$ or $|u(x_2)|$ is $+\infty$.
\end{deff}

Following the original notation in \cite{cheeger}, for $u\in L^2(X,m)$ we set
$$|u|_{1,2}=\inf\Big\{\liminf_{j\to\infty}\|g_j\|_{L^2}\Big\},\qquad
\|u\|_{1,2}=\|u\|_{L^2}+|u|_{1,2}$$
where the infimum above is taken over all sequences $(g_j)$, for which there exists a sequence $u_j\to u$ in $L^2$ such that, for each $j$, $g_j$ is an upper gradient for $u_j$. We define the Sobolev space $H=H^1(X,m)$ as the class of functions $u\in L^2(X,m)$ such that the norm $\|u\|_{1,2}$ is finite. In \cite[Theorem 2.7]{cheeger} it was proved that the space $H^1(X,m)$, endowed with the norm $\|\cdot\|_{1,2}$, is a Banach space. Moreover, in the same work, the following notion of a gradient was introduced .

\begin{deff}\label{s1d2}
The function $g\in L^2(X,m)$ is a generalized upper gradient of $u\in L^2(X,m)$, if there exist sequences $(g_{j})_{j\ge1}\subset L^2(X,m)$ and $(u_{j})_{j\ge1}\subset L^2(X,m)$ such that
$$u_j\to u\hbox{ in }L^2(X,m),\qquad g_j\to g\hbox{ in }L^2(X,m),$$
and $g_j$ is an upper gradient for $u_j$, for every $j\ge1$.
\end{deff}

For each $u\in H^1(X,m)$ there exists a unique generalized upper gradient $g_u\in L^2(X,m)$, such that 
$$\|u\|_{1,2}=\|u\|_{L^2}+\|g_u\|_{L^2};$$
moreover, for each generalized upper gradient $g$ of $u$, we have $g_u\le g$. The function $g_u$ is called minimal generalized upper gradient. It is the metric space analogue of the modulus of the weak gradient $|\nabla u|$, when $X$ is a bounded open set of the Euclidean space and $u\in H^1(X)$, the usual Sobolev space on $X$. Moreover, under some mild conditions on the metric space $X$ and the measure $m$, the minimal generalized upper gradient has a pointwise expression (see \cite{cheeger}). In fact, for any Borel function $u$, one can define
$$Lip\,u(x)=\liminf_{r\to0}\sup_{d(x,y)=r}\frac{|u(x)-u(y)|}{r},$$
with the convention $Lip\,u(x)=0$, whenever $x$ is an isolated point. If the measure metric space $(X,d,m)$ satisfies some standard assumptions (doubling and supporting a weak Poincar\'e inequality), then the function $Lip\,u$ is the minimal generalized upper gradient (see \cite[Theorem 6.1]{cheeger}. This notion of weak differentiability is flexible enough to allow the generalization of some of the notions, typical for the calculus in the Euclidean space, to the measure metric space setting. For example, in a natural way, one can define harmonic functions, solutions of the Poisson equation on an open set and some shape functionals on the subsets $\O\subset X$ as the Dirichlet energy $E(\O)$ and the first eigenvalue of the Dirichlet Laplacian $\lambda_1(\O)$:
\begin{align}
&\ds E(\O)=\inf\Big\{\frac{1}{2}\int g_u^2\,dm(x)+\frac{1}{2}\int u^2\,dm(x)-\int u\,dm(x)\ :\nonumber\\
&\ds\hskip2truecm\ u\in L^2(X,m),\ u=0\hbox{ $m$-a.e. on }X\setminus\O\Big\},\label{s1e1}\\
&\ds\lambda_1(\O)=\inf\Big\{\frac{\int g_u^2dx}{\int u^2dx}:\ u\neq 0,\ u\in L^2(X,m),\ u=0\hbox{ $m$-a.e. on }X\setminus\O\Big\}.\label{s1e2}
\end{align}

Our main existence results concerning the functionals defined above will be proved in Sections \ref{s3} and \ref{s6}. Even if the Cheeger framework of Sobolev spaces over a metric measure space is sufficient for our purposes, we notice that the framework and the results remain valid in the following more general abstract setting.

Consider a linear subspace $H\subset L^2(X,m)$ such that:
\begin{enumerate}[(H1)]
\item $H$ is a Riesz space ($u,v\in H\ \Rightarrow\ u\vee v,u\wedge v\in H$),
\item $H$ has the Stone property ($u\in H\ \Rightarrow\ u\wedge 1\in H$).
\end{enumerate} 

Suppose that we have a mapping $D:H\to L^2(X,m)$ such that:
\begin{enumerate}[(D1)]
\item $Du\ge0$, for each $u\in H$,
\item $D(u+v)\le Du+Dv$, for each $u,v\in H$,
\item $D(\alpha u)=|\alpha|Du$, for each $u\in H$ and $\alpha\in\R$,
\item $D(u\vee v)=Du\cdot I_{\{u>v\}}+Dv\cdot I_{\{u\le v\}}$.
\end{enumerate} 

\begin{oss}\label{s1r1}
In the above hypotheses on $H$ and $D$, we have that $D(u\wedge v)=Dv\cdot I_{\{u>v\}}+Du\cdot I_{\{u\le v\}}$ and $D(|u|)=Du$. Moreover, the quantity
$$\|u\|_{H}=\left(\|u\|^2_{L^2}+\|Du\|^2_{L^2}\right)^{1/2},$$
defined for $u\in H$, is a norm on $H$ which makes the inclusion $i:H\hookrightarrow L^2$ continuous.
\end{oss}

Clearly, one can take as $H$ the Sobolev space $H^1(X,m)$ and as $Du$ the minimal generalized upper gradient $g_u$. In this case, the conditions $H1,\ H2,\ D1,\ D2,\ D3,\ D4$ are satisfied (see \cite{cheeger}).\\

Furthermore, we assume that:
\begin{enumerate}[($\HH$1)]
\item $(H,\|\cdot\|_{H})$ is complete,
\item the inclusion $i:H\hookrightarrow L^2$ is compact,
\item the norm of the gradient is l.s.c. with respect to the $L^2$ convergence, i.e. for each sequence $u_n$ bounded in $H$ and convergent in the strong $L^2$ norm to a function $u\in L^2(X,m)$, we have that $u\in H$ and
$$\int_X|Du|^2dm\le \liminf_{n\to\infty}\int_X|Du_n|^2dm.$$
\end{enumerate}

All the results that we present are valid in the general setting of a Banach space $H$ and a gradient operator $D:H\to L^2$ satisfying $H1$, $H2$, $D1$, $D2$, $D3$, $D4$, $\HH1$, $\HH2$ and $\HH3$. In fact, from now on, we will use the notation $H$ instead of $H^1(X,m)$ and $Du$ instead of $g_u$, keeping in mind that $D$ is not a linear operator. We notice that $H$ can be chosen to be any closed Riesz subspace of $H^1(X,m)$. For example, one can consider the space $H^1_0(X,m)$, defined as the closure, with respect to the norm $\|\cdot\|_{1,2}$, of the Lipschitz functions with compact support in $X$. Notice that the different choices of $H$ lead to different functionals $\lambda_1$ and $E$ (see Section \ref{s6} for more details).

%%%%%%%%%%%%%%%%%%%%%%%%%%%%%%%%%%%%%%%%%%%%%%%%%%
\section{Elliptic operators on measure metric spaces}\label{s2}

Throughout this section we will assume that $H$ is a linear subspace of $L^2(X,m)$ such that the conditions $H1$, $H2$, $D1$, $D2$, $D3$, $D4$, $\HH1$, $\HH2$ and $\HH3$ are satisfied. In the first sub-section we select a suitable set of domains on which to develop a theory of boundary value problems, analogous to the Euclidean one. Note that we do not assume that the continuous functions are dense in the Sobolev space $H$. Nevertheless, this is true for most of the choices of $H$. In those cases we can work with quasi-open sets as we will see in Section \ref{s6}.

Our definition of a Sobolev function, which has 0 as a boundary value, slightly differs from the classical one. However, from the point of view of the shape optimization problems we consider, this difference is unessential (see Theorem \ref{teoJqo}). 

\begin{deff}\label{s2d1}
For each Borel set $\O\subset X$ we define the space of Sobolev functions with zero boundary values as
$$H_{0}(\O)=\{u\in H:\ u=0\hbox{ $m$-a.e. on }X\setminus\O\}.$$
\end{deff}

We say that a function $u\in L^2(X,m)$ is a solution of the elliptic boundary value problem formally written as
\be\label{s2e1}
\begin{cases}
-\Delta u+au=f,\\
u_{|\partial\O}=0,
\end{cases}
\ee
with $f\in L^2(X,m)$ and $a>0$, if $u$ is a minimizer of the functional
$$F_\O^{a,f}(u)=\frac{1}{2}\int_X|Du|^2d\m+\frac{a}{2}\int_X|u|^2d\m-\int_X fud\m+\chi_{H_{0}(\O)}(u),$$ 
where the characteristic function $\chi_{H_{0}(\O)}$ is defined on $L^2(X,m)$ as
\be\label{s2e2}
\chi_{H_0(\O)}(u)=\begin{cases}
0&\hbox{if }u\in H_0(\O),\\
+\infty&\hbox{otherwise.}
\end{cases}
\ee
In particular, any solution of \eqref{s2e1} is in $H$.

\begin{prop}\label{s2p1}
For each Borel set $\O\subset X$, the problem \eqref{s2e1} has a unique solution $w_{\O,a,f}\in H$. Moreover, if $f\ge0$, then $w_{\O,a,f}\ge0$ $m$-a.e. on $X$.
\end{prop}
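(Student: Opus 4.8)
The plan is to run the direct method of the calculus of variations, using the structural axioms on $D$ to replace the linearity one would normally invoke. First I would check that $F_\O^{a,f}$ is bounded below and coercive on $H_0(\O)$. For $u\in H_0(\O)$, Young's inequality gives $\int_X fu\,d\m\le\frac a4\|u\|_{L^2}^2+\frac1a\|f\|_{L^2}^2$, whence
$$F_\O^{a,f}(u)\ge\frac12\|Du\|_{L^2}^2+\frac a4\|u\|_{L^2}^2-\frac1a\|f\|_{L^2}^2,$$
so the functional is bounded below and $F_\O^{a,f}(u)$ controls $\|u\|_H^2$. A minimizing sequence $(u_n)$ is therefore bounded in $H$. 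By the compact embedding $\HH2$ I would extract a subsequence converging strongly in $L^2$ to some $u$; since each $u_n=0$ $m$-a.e. on $X\setminus\O$ and $L^2$-convergence preserves this, $u\in H_0(\O)$. For lower semicontinuity along the sequence, the terms $\frac a2\|u\|_{L^2}^2$ and $-\int_X fu\,d\m$ are continuous for strong $L^2$-convergence, while $\HH3$ yields both $u\in H$ and $\int_X|Du|^2\,d\m\le\liminf_n\int_X|Du_n|^2\,d\m$. Taking the liminf gives $F_\O^{a,f}(u)\le\liminf_n F_\O^{a,f}(u_n)$, so $u$ is a minimizer, which I would name $w_{\O,a,f}$.

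For uniqueness I would show $F_\O^{a,f}$ is strictly convex. The set $H_0(\O)$ is a linear subspace, so its indicator $\chi_{H_0(\O)}$ is convex; the term $-\int_X fu\,d\m$ is linear and $\frac a2\|u\|_{L^2}^2$ is strictly convex. The only nonlinear term is $\frac12\int_X|Du|^2\,d\m$, whose convexity I would extract from the axioms: for $t\in[0,1]$, axioms D2 and D3 give $0\le D(tu+(1-t)v)\le tDu+(1-t)Dv$ pointwise (the left inequality from D1), and squaring these non-negative quantities together with convexity of $s\mapsto s^2$ yields $|D(tu+(1-t)v)|^2\le t|Du|^2+(1-t)|Dv|^2$; integrating gives convexity of the Dirichlet term. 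A strictly convex functional admits at most one minimizer.

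Finally, for the sign I would assume $f\ge0$ and compare $w=w_{\O,a,f}$ with $w^+=w\vee0$. Since $0\in H$, axiom H1 gives $w^+\in H$, and $w^+=0$ $m$-a.e. on $X\setminus\O$, so $w^+\in H_0(\O)$. From D3 one deduces $D0=0$, and then D4 gives $D(w^+)=Dw\cdot I_{\{w>0\}}$. Comparing term by term, the gradient and $L^2$ contributions of $w^+$ are no larger than those of $w$ (they merely discard the integral over $\{w\le0\}$), while $-\int_X fw^+\,d\m+\int_X fw\,d\m=-\int_X f\,w^-\,d\m\le0$ since $f,w^-\ge0$. Hence $F_\O^{a,f}(w^+)\le F_\O^{a,f}(w)$; minimality forces equality, and uniqueness forces $w^+=w$, i.e.\ $w\ge0$ $m$-a.e. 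I expect the main obstacle to be precisely that $D$ is not linear: the convexity and lower semicontinuity of the Dirichlet term cannot be taken for granted and must be drawn out of D1--D4 and $\HH3$, and the positivity argument hinges on the chain-rule-type identity D4 rather than on the familiar Euclidean relation for $|\nabla u^+|$.
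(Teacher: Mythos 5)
Your proposal is correct and follows essentially the same route as the paper: the direct method with coercivity from Young's inequality, compactness from $\HH2$, lower semicontinuity from $\HH3$, uniqueness from convexity of the Dirichlet term (via D1--D3) plus strict convexity of the $L^2$ term, and positivity by a truncation comparison settled by uniqueness. The only cosmetic difference is in the last step: the paper compares $w$ with $|w|$ using the identity $D|u|=Du$ (Remark \ref{s1r1}), while you compare with $w\vee 0$ using $D(w\vee 0)=Dw\cdot I_{\{w>0\}}$ from D4 and $D0=0$; the two are equivalent, both resting on D4.
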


\begin{proof}
Suppose that $w_n$ is a minimizing sequence for $F_\O^{a,f}$ in $H_0(\O)$. Moreover, we can assume that for each $n>0$
$$\frac{1}{2}\int_X|Dw_n|^2d\m+\frac{a}{2}\int_X|w_n|^2d\m-\int_X fw_nd\m\le0,$$
and thus
$$\frac{1}{2}\int_X|Dw_n|^2d\m+\frac{a}{4}\int_X|w_n|^2d\m\le\frac{1}{a}\int_X f^2d\m,$$
from which we deduce that the sequence $w_n$ is bounded in $H$:
$$\|w_n\|_H\le C_a\|f\|_{L^2(X,m)}$$
for a suitable constant $C_a$. By the compact inclusion of $H$ in $L^2(X,m)$, we have that up to a subsequence 
$$w_n\xrightarrow[n\to\infty]{L^2}w\in H.$$
By the semicontinuity of the $H$ norm with respect to the convergence in $L^2(X,m)$, we have that 
$$F_{\O}^{a,f}(w)\le\liminf_{n\to\infty}F_\O^{a,f}(w_n),$$
and thus we have the existence of a minimizer. The uniqueness follows by the inequality 
$$D\left(\frac{u+v}{2}\right)\le\frac{1}{2}Du+\frac{1}{2}Dv,$$
 and the strict convexity of the $L^2$ norm. In the case when $f\ge0$, we have the inequality $F_\O^{a,f}(|u|)\le F_\O^{a,f}(u)$, for each $u\in H$ and so, by the uniqueness of the minimizer, we have that $w_{\O,a,f}\ge0$.
\end{proof}

\begin{oss}
From the proof of Proposition \ref{s2p1} we obtain, for any $f\in L^2(X,m)$ and $a>0$, the estimates
\begin{align}
&\|w_{\O,a,f}\|_H\le C_a\|f\|_{L^2(X,m)},\label{s2p1e1}\\
&\big|F_\O^{a,f}(w_{\O,a,f})\big|\le C_a\|f\|^2_{L^2(X,m)}.\label{s2p1e2}
\end{align}
\end{oss}

In the following, we will always denote with $w_{\O,a,f}$ the unique solution of \eqref{s2e1}. Since we will often consider the case $a=1$, $f=1$, we adopt the notation
\be\label{s2e3}
w_\O:=w_{\O,1,1},\qquad F_\O=F_\O^{1,1}.
\ee
For $a\in(0,+\infty)$ and $f\in L^2(X,m)$, we have comparison principles, for the family of solutions $w_{\O,a,f}$ of the problem \eqref{s2e1}, which are analogous to those in the Euclidean space. 

\begin{prop}\label{s2p2}
Assume that $f\ge0$. Then the solutions of \eqref{s2e1} satisfy the following inequalities:
\begin{enumerate}[(a)]
\item If $\omega$ and $\O$ are Borel sets in $X$ such that $\omega\subset\O$, then $w_{\omega,a,f}\le w_{\O,a,f}$.
\item If $0<a<A$, then $w_{\O,a,f}\ge w_{\O,A,f}$.
\item If $f,g\in L^2(X,m)$ are such that $f\le g$, then $w_{\O,a,f}\le w_{\O,a,g}$.
\end{enumerate}
\end{prop}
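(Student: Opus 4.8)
The plan is to prove all three inequalities by the same variational mechanism: for each claimed inequality $w_1 \le w_2$, I will test the optimality of each minimizer against a competitor built from lattice operations ($\vee$, $\wedge$), and then exploit the uniqueness from Proposition \ref{s2p1}. The underlying idea is the standard ``cut-and-paste'' argument: if $w_1 \le w_2$ fails on a set of positive measure, I replace $w_1$ by $w_1 \wedge w_2$ and $w_2$ by $w_1 \vee w_2$, show these competitors do not increase the respective energies, and conclude that the new pair is also a pair of minimizers, forcing equality with the originals and hence the inequality. The key algebraic input is property (D4) together with Remark \ref{s1r1}, which gives $D(u\vee v) = Du\cdot I_{\{u>v\}} + Dv\cdot I_{\{u\le v\}}$ and the companion formula for $u\wedge v$; these let me split and recombine the gradient energies exactly.

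For part (a), I would note first that $\omega \subset \O$ gives $H_0(\omega) \subset H_0(\O)$, so $w_{\omega,a,f}\in H_0(\O)$ is an admissible competitor for the problem on $\O$. Set $w_1 = w_{\omega,a,f}$ and $w_2 = w_{\O,a,f}$; both are nonnegative since $f\ge 0$, by Proposition \ref{s2p1}. I would test the functional $F_\omega^{a,f}$ at $w_1 \wedge w_2$ and the functional $F_\O^{a,f}$ at $w_1 \vee w_2$. The crucial point is that $w_1\wedge w_2 \in H_0(\omega)$ (since $w_1$ vanishes off $\omega$ and the wedge is dominated by $w_1$) and $w_1\vee w_2\in H_0(\O)$, so both competitors are admissible for their respective problems. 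Using the (D4)-type splitting of the gradient terms and the pointwise identity $(w_1\wedge w_2)^2 + (w_1\vee w_2)^2 = w_1^2 + w_2^2$ for the zeroth-order terms, one checks that
$$F_\omega^{a,f}(w_1\wedge w_2) + F_\O^{a,f}(w_1\vee w_2) \le F_\omega^{a,f}(w_1) + F_\O^{a,f}(w_2).$$
Since $w_1,w_2$ are the respective minimizers, the reverse inequality holds term by term, so both must be equalities; uniqueness then yields $w_1\wedge w_2 = w_1$ and $w_1\vee w_2 = w_2$, i.e. $w_1\le w_2$.

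Part (c) follows the same pattern on a single domain $\O$ with two right-hand sides $f\le g$: writing $w_1=w_{\O,a,f}$, $w_2=w_{\O,a,g}$, I test $F_\O^{a,f}$ at $w_1\wedge w_2$ and $F_\O^{a,g}$ at $w_1\vee w_2$; here the extra ingredient is the sign of the linear term, where the inequality $f\le g$ enters precisely to absorb the cross-terms $\int (g-f)(w_1\vee w_2 - w_2)\,dm \ge 0$, and the lattice competitors are automatically in $H_0(\O)$. Part (b) is the analogous statement with $a<A$: the zeroth-order coefficient plays the role that $f$ played in (c), and one verifies that the difference in the quadratic penalties $\tfrac{a}{2}\int|\cdot|^2$ versus $\tfrac{A}{2}\int|\cdot|^2$ has the right sign on the overlap region. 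I expect the main obstacle to be bookkeeping the gradient energy correctly: because $D$ is not linear, I cannot simply expand $\int|D(w_1\vee w_2)|^2$ by bilinearity, so the argument must rely entirely on the pointwise indicator decomposition in (D4) and the fact that the sets $\{w_1>w_2\}$ and $\{w_1\le w_2\}$ partition $X$, ensuring the summed gradient energies of the wedge and vee exactly reconstitute those of $w_1$ and $w_2$ with no error term.
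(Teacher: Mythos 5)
Your proposal is correct and takes essentially the same route as the paper's proof: lattice competitors $u\vee U\in H_0(\Omega)$ and $u\wedge U\in H_0(\omega)$, the exact exchange of gradient and zeroth-order energies via the pointwise splitting in (D4), the minimality of each solution, and uniqueness of minimizers, with the sign of $\frac{A-a}{2}\int_{\{u<U\}}(U^2-u^2)\,dm$ (resp. $\int_{\{u>U\}}(g-f)(u-U)\,dm$) doing the work in parts (b) and (c). The only cosmetic difference is that you sum the two minimality inequalities into a single global energy identity, whereas the paper performs the same cancellation region by region on $\{u>U\}$ (or $\{u<U\}$); the mathematical content is identical.
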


\begin{proof}
\begin{enumerate}[(a)]
\item We write, for simplicity, $u=w_{\omega,a,f}$ and $U=w_{\O,a,f}$. Consider the functions $u\vee U\in H_0(\O)$ and $u\wedge U\in H_0(\omega)$ so that
$$\chi_{H_0(\omega)}(u\wedge U)=\chi_{H_0(\omega)}(u)=0,\qquad\chi_{H_0(\O)}(u\vee U)=\chi_{H_0(\O)}(U)=0.$$
Moreover, by the minimizing property of $u$ and $U$, we have
$$F_\omega^{a,f}(u\wedge U)\ge F_\omega^{a,f}(u),$$
$$F_\O^{a,f}(u\vee U)\ge F_\O^{a,f}(U).$$
We write $X=\{u> U\}\cup\{u\le U\}$ to obtain
$$\begin{array}{ll}
&\ds\frac{1}{2}\int_{\{u> U\}\cap\omega}|DU|^2d\m+\frac{a}{2}\int_{\{u>U\}\cap\omega}|U|^2d\m-\int_{\{u>U\}\cap\omega}fU\,d\m\ge\\
&\ds\qquad\qquad\qquad\ge\frac{1}{2}\int_{\{u> U\}\cap\omega}|Du|^2d\m+\frac{a}{2}\int_{\{u> U\}\cap\omega}|u|^2d\m-\int_{\{u> U\}\cap\omega}fu\,d\m,\\
&\ds\frac{1}{2}\int_{\{u> U\}\cap\O}|Du|^2d\m+\frac{a}{2}\int_{\{u> U\}\cap\O}|u|^2d\m-\int_{\{u> U\}\cap\O}fu\,d\m\ge\\
&\ds\qquad\qquad\qquad\ge\frac{1}{2}\int_{\{u> U\}\cap\O}|DU|^2d\m+\frac{a}{2}\int_{\{u>U\}\cap\O}|U|^2d\m-\int_{\{u>U\}\cap\O}fU\,d\m.
\end{array}$$
Since $\{u>U\}\subset\omega\subset\O$, we can conclude that
$$\begin{array}{ll}
&\ds\frac{1}{2}\int_{\{u> U\}}|DU|^2d\m+\frac{a}{2}\int_{\{u> U\}}|U|^2d\m-\int_{\{u> U\}}fU\,d\m=\\
&\ds\qquad\qquad\qquad=\frac{1}{2}\int_{\{u> U\}}|Du|^2d\m+\frac{a}{2}\int_{\{u>U\}}|Du|^2d\m-\int_{\{u>U\}}fu\,d\m,
\end{array}$$
and then
$$F_\omega^{a,f}(u\wedge U)= F_\omega^{a,f}(u),$$
$$F_\O^{a,f}(u\vee U)= F_\O^{a,f}(U).$$
By the uniqueness of the minimizer, we have $u=u\wedge U$ and $U=u\vee U$. Then $u\le U$ $m$-a.e. in $X$.

\item Let $u=w_{\O,a,f}$ and $U=w_{\O,A,f}$. As before, we consider the functions $u\vee U\in H_0(\O)$ and $u\wedge U\in H_0(\O)$. We have
$$F_\O^{a,f}(u\vee U)\ge F_\O^{a,f}(u),\qquad F_\O^{A,f}(u\wedge U)\ge F_\O^{A,f}(U).$$
We write $X=\{u< U\}\cup\{u\ge U\}$ to obtain
$$\begin{array}{ll}
&\ds\frac{1}{2}\int_{\{u< U\}}|DU|^2d\m+\frac{a}{2}\int_{\{u< U\}}|U|^2d\m-\int_{\{u< U\}}fU d\m\\
&\ds\qquad\qquad\qquad\ge\frac{1}{2}\int_{\{u< U\}}|Du|^2d\m+\frac{a}{2}\int_{\{u< U\}}|u|^2d\m-\int_{\{u< U\}}fu d\m,\\
&\ds\frac{1}{2}\int_{\{u< U\}}|Du|^2d\m+\frac{A}{2}\int_{\{u< U\}}|u|^2d\m-\int_{\{u<U\}}fu d\m\\
&\ds\qquad\qquad\qquad\ge\frac{1}{2}\int_{\{u< U\}}|DU|^2d\m+\frac{A}{2}\int_{\{u< U\}}|U|^2d\m-\int_{\{u< U\}}fU d\m.
\end{array}$$
Combining the two inequalities, we have
$$\begin{array}{ll}
0&\ds\ge\left(\frac{1}{2}\int_{\{u< U\}}|Du|^2d\m+\frac{a}{2}\int_{\{u< U\}}|u|^2d\m-\int_{\{u<U\}}fu d\m\right)\\
&\ds\qquad\qquad\qquad-\left(\frac{1}{2}\int_{\{u< U\}}|DU|^2d\m+\frac{a}{2}\int_{\{u< U\}}|U|^2d\m-\int_{\{u< U\}}fU d\m\right)\\
&\ds\ge\frac{A-a}{2}\int_{\{u< U\}}(|U|^2-|u|^2)d\m\ge0.
\end{array}$$
Therefore we have, 
$$\int_{\{u< U\}}(|U|^2-|u|^2)\,d\m=0,$$
and, in conclusion, $u\ge U$ $m$-a.e. on $X$.

\item Let $u=w_{\O,a,f}$ and $U=w_{\O,a,g}$. As in the previous two points, we consider the functions $u\vee U,u\wedge U\in H_0(\omega)$, and write
$$F_\O^{a,g}(u\vee U)\ge F_\O^{a,g}(U),\qquad F_\O^{a,f}(u\wedge U)\ge F_\O^{a,f}(u).$$
We decompose the metric space $X$ as $\{u> U\}\cup\{u\le U\}$ to obtain
$$\begin{array}{ll}
&\ds\frac{1}{2}\int_{\{u> U\}}|Du|^2d\m+\frac{a}{2}\int_{\{u> U\}}u^2d\m-\int_{\{u> U\}}gu d\m\\
&\ds\qquad\qquad\qquad\ge\frac{1}{2}\int_{\{u> U\}}|DU|^2d\m+\frac{a}{2}\int_{\{u> U\}}U^2d\m-\int_{\{u> U\}}gU d\m,\\
&\ds\frac{1}{2}\int_{\{u> U\}}|DU|^2d\m+\frac{a}{2}\int_{\{u> U\}}U^2d\m-\int_{\{u>U\}}fU d\m\\
&\ds\qquad\qquad\qquad\ge\frac{1}{2}\int_{\{u> U\}}|Du|^2d\m+\frac{a}{2}\int_{\{u> U\}}u^2d\m-\int_{\{u> U\}}fu d\m.
\end{array}$$
Then, we have
$$\begin{array}{ll}
0&\ds\ge\frac{1}{2}\int_{\{u> U\}}|Du|^2d\m+\frac{a}{2}\int_{\{u> U\}}u^2d\m-\int_{\{u> U\}}fu d\m\\
&\ds\qquad\qquad\qquad-\left(\frac{1}{2}\int_{\{u> U\}}|DU|^2d\m+\frac{a}{2}\int_{\{u> U\}}U^2d\m-\int_{\{u> U\}}fU d\m\right)\\
&\ds\ge\int_{\{u> U\}}(g-f)u d\m-\int_{\{u> U\}}(g-f)U d\m=\int_{\{u> U\}}(g-f)(u-U) d\m\ge0.
\end{array}$$
Thus, we obtain the equality
$$\begin{array}{ll}
&\ds\frac{1}{2}\int_{\{u> U\}}|Du|^2d\m+\frac{a}{2}\int_{\{u> U\}}u^2d\m-\int_{\{u> U\}}fu d\m\\
&\ds\qquad\qquad\qquad=\frac{1}{2}\int_{\{u> U\}}|DU|^2d\m+\frac{a}{2}\int_{\{u> U\}}U^2d\m-\int_{\{u> U\}}fU d\m,
\end{array}$$
and, in terms of the functional $F_\O^{a,f}$,
$$F_\O^{a,f}(u)=F_\O^{a,f}(u\wedge U).$$
By the uniqueness of the minimizer of $F_\O^{a,f}$, we conclude that $U\ge u$ $m$-a.e.
\end{enumerate}
\end{proof}

We now prove a result, analogous to the strong maximum principle for elliptic operators in the Euclidean space. To prove this result we need the following lemma, which is similar to \cite[Proposition 3.1]{dmgar}.

\begin{lemma}\label{s2l1}
Fix an arbitrary $u\in H_0(\O)$ and consider the sequence of functionals defined on $L^2(X,m)$
$$F_n(v)=\frac{1}{2}\int_{X}|Dv|^2d\m+\frac{n}{2}\int_{X}|v-u|^2 d\m+\chi_{H_{0}(\O)}(v).$$
Each of these functionals has a unique minimizer $u_n\in H_0(\O)$. The sequence of minimizers $u_n$ is convergent to $u$ in strongly in $L^2(X,m)$; more precisely, we have
$$\|u_n-u\|^2_{L^2(X)}\le\frac{C}{n}.$$
\end{lemma}

\begin{proof}
For each $n\ge1$, we have
$$\frac{n}{2}\int_{X}|u_n-u|^2 d\m\le F_n(u_n)\le F_n(u)=\frac{1}{2}\int_{X}|Du|^2d\m,$$
which concludes the proof.
\end{proof}

The following proposition replaces the classical ``strong maximum principle'' in the general metric space framework.

\begin{prop}\label{s2p3}
Let $\O\subset X$ be a Borel set and let $w_\O$ be the solution of the problem \eqref{s2e1} with $a=1$ and $f=1$. Then for each $u\in H_0(\O)$, we have $\{u\neq0\}\subset\{w_\O>0\}$ $m$-a.e..
\end{prop}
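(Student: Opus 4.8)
The plan is to exploit the approximating minimizers $u_n$ furnished by Lemma \ref{s2l1} together with the comparison principles of Proposition \ref{s2p2}. First I would reduce to the case of a bounded, nonnegative $u$. Since $H$ is a Riesz space, $|u|\in H_0(\O)$ and $\{u\neq0\}=\{|u|\neq0\}$, so we may assume $u\ge0$; using the Stone property together with $D3$, each truncation $u\wedge k$ lies in $H_0(\O)$ and satisfies $\{u\wedge k\neq0\}=\{u>0\}=\{u\neq0\}$ for every $k>0$. Hence it suffices to establish the inclusion for a fixed $u$ with $0\le u\le k$.

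Next I would identify the minimizers produced by the lemma. Expanding $\frac n2\int_X|v-u|^2\,d\m=\frac n2\int_X v^2\,d\m-\int_X(nu)\,v\,d\m+\frac n2\int_X u^2\,d\m$ shows that $F_n(v)=F_\O^{n,nu}(v)+\frac n2\int_X u^2\,d\m$, so the minimizer $u_n$ of $F_n$ coincides with the solution $w_{\O,n,nu}$ of \eqref{s2e1}. Since $nu\ge0$, Proposition \ref{s2p1} gives $u_n\ge0$ $m$-a.e.

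The core step is a comparison chain bounding $u_n$ by a multiple of $w_\O$. From $0\le nu\le nk$ and part (c) of Proposition \ref{s2p2} we get $u_n=w_{\O,n,nu}\le w_{\O,n,nk}$. By $D3$ the functional rescales as $F_\O^{n,cf}(cv)=c^2F_\O^{n,f}(v)$ for every $c>0$ (here $\chi_{H_0(\O)}(cv)=\chi_{H_0(\O)}(v)$ and $D(cv)=cDv$), whence the minimizers rescale too: $w_{\O,n,nk}=nk\,w_{\O,n,1}$. Finally, taking $n>1$ and applying part (b) of Proposition \ref{s2p2} gives $w_{\O,n,1}\le w_{\O,1,1}=w_\O$. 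Combining the three estimates yields $u_n\le nk\,w_\O$ $m$-a.e., so $\{u_n>0\}\subset\{w_\O>0\}$ and, equivalently, $u_n=0$ $m$-a.e. on $\{w_\O=0\}$.

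To conclude, Lemma \ref{s2l1} gives $u_n\to u$ in $L^2(X,m)$, so a subsequence converges $m$-a.e.; since each $u_n$ vanishes $m$-a.e. on $\{w_\O=0\}$, the limit $u$ does as well. Thus $\{u\neq0\}\subset\{w_\O>0\}$ $m$-a.e., which after undoing the two reductions is the assertion. The main obstacle I anticipate is the nonlinearity of $D$: every algebraic manipulation used above, namely the rescaling identity $w_{\O,n,cf}=c\,w_{\O,n,f}$ and the three comparisons, must be justified through $D3$ and Proposition \ref{s2p2} rather than through a linear Euler--Lagrange equation. Once these are secured the remaining passages are routine. An alternative, self-contained route avoiding the lemma would test the minimality of $w_\O$ against the competitor $w_\O\vee(t|u|)\in H_0(\O)$ and let $t\to0^+$, using $D4$ to compute $D\big(w_\O\vee(t|u|)\big)$; this produces the same conclusion but again hinges entirely on handling $D$ through $D3$ and $D4$.
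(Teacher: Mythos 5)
Your proposal is correct and follows essentially the same route as the paper: both reduce to a bounded nonnegative $u$, identify the penalized minimizers $u_n$ of Lemma \ref{s2l1} as the solutions $w_{\O,n,nu}$ of \eqref{s2e1}, bound them by a multiple of $w_\O$ via the comparison principles of Proposition \ref{s2p2}, and pass to the $m$-a.e.\ limit. The only (cosmetic) difference is the order of the comparisons --- you apply part (c) first and then rescale by $D3$ and apply part (b), obtaining $u_n\le nk\,w_\O$, whereas the paper goes through the intermediate minimizer $v_n$ of $G_n$ (part (b) then part (c)) to get $u_n\le v_n\le nw_\O$ --- and your explicit rescaling identity $w_{\O,n,ck}=c\,w_{\O,n,k}$ in fact makes precise the step the paper uses implicitly when asserting that $nw_\O$ minimizes $F_\O^{(n)}$.
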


\begin{proof}
Considering $|u|$ instead of $u$, we can restrict our attention only to nonnegative functions. Moreover, by taking $u\wedge 1$, we can suppose that $0\le u\le1$. Consider the family of functionals $(F_n)_{n\in\N}$, introduced in Lemma \ref{s2l1}, together with the corresponding minimizers $(u_n)_{n\in\N}$. Observe that $u_n$ is also a minimizer of the functional
$$F'_n(v)=\frac{1}{2}\int_{X}|Dv|^2d\m+\frac{n}{2}\int_{X}v^2 d\m-n\int_{X}vu d\m+\chi_{H_0(\O)}(v).$$
Consider the sequence of functionals
$$G_n(v)=\frac{1}{2}\int_{X}|Dv|^2d\m+\frac{1}{2}\int_{X}v^2d\m-n\int_{X}vu d\m+\chi_{H_0(\O)}(v);$$
each of them has a unique minimizer $v_n$. Since $n w_\O$ is the unique minimizer of the functional
$$F_\O^{(n)}(v)=\frac{1}{2}\int_X|Dv|^2d\m+\frac{1}{2}\int_{X}v^2d\m-n\int_{X}v d\m+\chi_{H_0(\O)}(v),$$
we have, by the weak maximum principle (Proposition \ref{s2p2}), that $nw_\O\ge v_n\ge u_n$. Thus, the inclusion $\{u_n>0\}\subset\{w_\O>0\}$ holds $m$-a.e. for each $n$ and, passing to the limit as $n\to\infty$, we obtain $\{w_\O>0\}\supset\{u>0\}$.
\end{proof}

\begin{cor}\label{s2c1}
Let $\O\subset X$ be a Borel set and let $w_\O$ be the solution of the problem \eqref{s2e1} with $a=1$ and $f=1$. Then, we have
\begin{enumerate}[(a)]
\item $H_0(\O)=H_0(\{w_\O>0\})$,
\item $\lambda_1(\O)=\lambda_1(\{w_\O>0\})$,
\item $E(\O)=E(\{w_\O>0\})$.
\end{enumerate}
\end{cor}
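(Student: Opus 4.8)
The plan is to reduce the whole statement to the identity of function spaces in part (a), namely $H_0(\O)=H_0(\{w_\O>0\})$, from which parts (b) and (c) will follow with no extra work, since both $\lambda_1$ and $E$ are defined as infima of fixed functionals taken over exactly these spaces.

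For part (a) I would write $\omega:=\{w_\O>0\}$ and prove the two inclusions separately. First I note that $w_\O\in H_0(\O)$ forces $w_\O=0$ $m$-a.e.\ on $X\setminus\O$, so $m(\omega\setminus\O)=0$, i.e.\ $\omega\subset\O$ up to an $m$-null set. Consequently $X\setminus\O\subset X\setminus\omega$ modulo $m$-null sets, and any function vanishing $m$-a.e.\ outside $\omega$ also vanishes $m$-a.e.\ outside $\O$; this yields the easy inclusion $H_0(\omega)\subset H_0(\O)$. The reverse inclusion $H_0(\O)\subset H_0(\omega)$ is where the substance lies, and it is delivered directly by Proposition \ref{s2p3}: fixing $u\in H_0(\O)$, the proposition asserts $\{u\neq0\}\subset\{w_\O>0\}=\omega$ $m$-a.e., which is precisely the statement that $u=0$ $m$-a.e.\ on $X\setminus\omega$, hence $u\in H_0(\omega)$. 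Combining the two inclusions proves (a).

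For (b) and (c) I would simply read off the definitions \eqref{s1e1} and \eqref{s1e2} (with the gradient $Du$ in place of $g_u$): the quantity $\lambda_1(\cdot)$ is the infimum of the Rayleigh quotient over the nonzero elements of $H_0(\cdot)$, while $E(\cdot)$ is the infimum of the Dirichlet-type energy over all of $H_0(\cdot)$. (Functions in $L^2\setminus H$ contribute $+\infty$ and are irrelevant to both infima, so the admissible class is effectively $H_0(\cdot)$.) Since part (a) shows these admissible classes coincide for $\O$ and $\omega$, the corresponding infima are equal, giving $\lambda_1(\O)=\lambda_1(\omega)$ and $E(\O)=E(\omega)$.

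I expect the only delicate point to be the careful bookkeeping of the $m$-a.e.\ set inclusions in (a); the genuine analytic content, the reverse inclusion $H_0(\O)\subset H_0(\omega)$, is entirely absorbed by Proposition \ref{s2p3}, so no further estimates are required and (b)–(c) are purely formal consequences.
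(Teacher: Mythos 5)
Your proposal is correct and follows exactly the route the paper intends: the paper states Corollary \ref{s2c1} without proof as an immediate consequence of Proposition \ref{s2p3}, and your argument (the easy inclusion $H_0(\{w_\O>0\})\subset H_0(\O)$ from $w_\O\in H_0(\O)$, the substantive inclusion from Proposition \ref{s2p3}, and (b)--(c) as formal consequences of the coincidence of the admissible classes) is precisely that intended argument, carried out with the correct $m$-a.e.\ bookkeeping.
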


\begin{deff}\label{s2d2}
We say that the Borel set $\O\subset X$ is an energy set, if the solution $w_\O$ of \eqref{s2e1} with $a=1$ and $f=1$ is such that $m(\O\setminus\{w_\O>0\})=0$.
\end{deff}

Proposition \ref{s2p3} allows us to associate to each energy set $\O$ a unique function $w_\O\in H$. This identification will allow us to import some of the Banach space properties of $H$ into the family of energy sets. In particular, in the next section we will introduce a notion of convergence for this class of domains.

\begin{oss}\label{s2r1}
For each $u\in H$ the set $\O=\{u>0\}$ is an energy set. In fact, $\{w_\O>0\}\subset\{u>0\}$ since $w_\O\in H_0(\O)$, while for the opposite inclusion we use Proposition \ref{s2p3}, by which we have that $\{u>0\}=\{u^+\neq0\}\subset\{w_\O>0\}$. We will give a precise characterization of the energy sets in Section \ref{s5} (see Remark \ref{oss0}).
\end{oss}

\begin{oss}\label{s2r2}
Suppose that $F$ is a functional defined on the family of closed linear subspaces of $H$. If $\O$ is a solution of the shape optimization problem 
\be\label{sopF}
\min\Big\{F(H_0(\O)):\ \O\subset X,\ \O\hbox{ Borel, }m(\O)\le c\Big\},
\ee
then $\{w_\O>0\}$ is also a solution of the same problem \eqref{sopF}, i.e. there exists a solution which is an energy set. 
\end{oss}

%%%%%%%%%%%%%%%%%%%%%%%%%%%%%%%%%%%%%%%%%%%%%%%%%%
\section{The $\gamma$ and weak-$\gamma$ convergences}\label{s3}

Throughout this section we will assume that all the properties $H1,H2$, $D1,D2,D3$, $D4$, $\HH1,\HH2$ and $\HH_3$ are satisfied. We introduce a suitable topology on the class of energy sets, which allows us to prove the main existence result Theorem \ref{teoJ}.

\begin{deff}\label{s3d1}
We say that a sequence of energy sets $\O_n$ $\gamma$-converges to the energy set $\O$ if $w_{\O_n}$ converges to $w_\O$ strongly in $L^2(X,m)$.
\end{deff}

\begin{deff}\label{s3d2}
We say that a sequence of energy sets $\O_n$ weak-$\gamma$-converges to the energy set $\O$ if the sequence $\left(w_{\O_n}\right)_{n\ge1}$ is strongly convergent in $L^2(X,m)$ and its limit $w\in H$ is such that $\{w>0\}=\O$.
\end{deff}

\begin{oss}\label{s3r1}
The family of energy set is sequentially compact with respect to the weak $\gamma$ convergence. In fact, by \eqref{s2p1e1} and the compact inclusion of $H$ in $L^2(X,m)$, we have that each sequence of energy sets has a weak-$\gamma$ convergent subsequence.
\end{oss}

\begin{prop}\label{s3p1}
Suppose that a sequence of energy sets $\O_n$ weak-$\gamma$-converges to $\O$ and suppose that $(u_n)_{n\ge0}\subset H$ is a sequence bounded in $H$ and strongly convergent in $L^2(X,m)$ to a function $u\in H$. If $u_n\in H_0(\O_n)$ for every $n$, then $u\in H_0(\O)$.
\end{prop}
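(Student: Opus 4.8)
The plan is to reduce to nonnegative $u_n$ and then to prove the only nontrivial fact, namely that $u$ vanishes $m$-a.e. on $\{w=0\}$, where $w=\lim_n w_{\O_n}$ and $\O=\{w>0\}$ by definition of weak-$\gamma$ convergence. First I would replace $u_n$ by $|u_n|$: since $D|u_n|=Du_n$ (Remark \ref{s1r1}) the new sequence is still bounded in $H$ and converges in $L^2$ to $|u|$, it still lies in $H_0(\O_n)$, and $|u|\in H_0(\O)$ is equivalent to $u\in H_0(\O)$. By $\HH3$ we already know $u\in H$, and since each $w_{\O_n}\ge0$ (Proposition \ref{s2p1}) the limit satisfies $w\ge0$, so $X\setminus\O=\{w=0\}$ up to $m$-null sets. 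Because $\O_n$ is an energy set we have $H_0(\O_n)=H_0(\{w_{\O_n}>0\})$ (Corollary \ref{s2c1}), so Proposition \ref{s2p3} gives $\{u_n>0\}\subset\{w_{\O_n}>0\}$, i.e. $u_n=0$ $m$-a.e. on $\{w_{\O_n}=0\}$; thus it suffices to prove $\int_{\{w=0\}}u\,dm=0$.

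The core of the argument is a uniform estimate obtained by testing the minimality of $w_{\O_n}$ for $F_{\O_n}$. For $k\in\N$ set $\phi_n=u_n/k$ and use the competitor $w_{\O_n}\vee\phi_n\in H_0(\O_n)$. Writing $S=\{\phi_n>w_{\O_n}\}=\{u_n>k\,w_{\O_n}\}$ and splitting the gradient term with D4, all integrands over $S^c$ coincide with those of $F_{\O_n}(w_{\O_n})$, so the inequality $F_{\O_n}(w_{\O_n}\vee\phi_n)\ge F_{\O_n}(w_{\O_n})$ reduces, after keeping the linear terms together on the left, to
\[
\int_S(\phi_n-w_{\O_n})\,dm\le \tfrac12\int_S|D\phi_n|^2\,dm+\tfrac12\int_S\phi_n^2\,dm-\tfrac12\int_S|Dw_{\O_n}|^2\,dm-\tfrac12\int_S w_{\O_n}^2\,dm.
\]
Dropping the two nonpositive $w_{\O_n}$-terms on the right and using $D\phi_n=Du_n/k$ (D3), the right-hand side is at most $\tfrac{1}{2k^2}\|u_n\|_H^2$, while the left-hand side equals $\tfrac1k\|(u_n-k\,w_{\O_n})^+\|_{L^1}$. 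Since $\sup_n\|u_n\|_H<\infty$, this yields the key bound $\|(u_n-k\,w_{\O_n})^+\|_{L^1}\le C/k$ with $C$ independent of $n$.

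To conclude I would fix $k$ and let $n\to\infty$: since $(u_n-k\,w_{\O_n})^+\to(u-kw)^+$ in $L^2$ (hence in $L^1$, as $m$ is finite) by $L^2$-continuity of the positive part, the bound passes to the limit and gives $\|(u-kw)^+\|_{L^1}\le C/k$. Letting $k\to\infty$, monotone convergence gives $(u-kw)^+\downarrow u\,I_{\{w=0\}}$ in $L^1$, whence $\int_{\{w=0\}}u\,dm=0$ and therefore $u\in H_0(\{w>0\})=H_0(\O)$.

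I expect the main obstacle to be isolating the correct competitor and rearrangement rather than any later step. The naive manipulation, which bounds the $w_{\O_n}$-energy from below by $-\int_S w_{\O_n}$, collapses to the trivial inequality $0\le\tfrac{1}{2k^2}(\|Du_n\|_{L^2}^2+\|u_n\|_{L^2}^2)$ and carries no information; the essential point is instead to retain $\int_S(\phi_n-w_{\O_n})$ intact on the left and discard only the nonpositive $w_{\O_n}$-terms on the right, which is precisely what produces the decisive factor $1/k$. It is exactly here that the boundedness of $u_n$ in $H$ (and not merely in $L^2$) is used, consistently with the fact that the statement fails for spiky sequences with uncontrolled gradients.
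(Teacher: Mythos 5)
Your proof is correct, but it takes a genuinely different route from the paper's. The paper proceeds by penalization: invoking Lemma \ref{s2l1}, it introduces for each $k$ the auxiliary minimizers $u_n^k$ of $F_{n,k}$ (after first truncating to $0\le u_n\le 1$), shows via the comparison argument of Proposition \ref{s2p3} that $u_n^k\le(k+1)w_{\O_n}$, extracts for each $k$ an $L^2$-limit $u^k\le(k+1)w$ (hence $u^k\in H_0(\O)$), and uses minimality of $u_n^k$ to get the uniform-in-$n$ bound $\|u_n^k-u_n\|_{L^2}^2\le C/k$, so that $u^k\to u$ in $L^2$ and $u\in H_0(\O)$. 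You bypass all of this auxiliary machinery: a single competitor test, $F_{\O_n}(w_{\O_n}\vee(u_n/k))\ge F_{\O_n}(w_{\O_n})$, together with $D3$--$D4$ and the $H$-bound on $u_n$, yields the uniform $L^1$ estimate $\|(u_n-k\,w_{\O_n})^+\|_{L^1}\le C/k$, and then the two limits $n\to\infty$ (Lipschitz continuity of $v\mapsto v^+$ in $L^2$, finiteness of $m$) and $k\to\infty$ (monotone convergence) give $u=0$ $m$-a.e.\ on $\{w=0\}=X\setminus\O$, which suffices since $u\in H$ is part of the hypothesis. The two arguments share the same skeleton --- for each $k$, control $u_n$ by a multiple of $w_{\O_n}$ with error $O(1/k)$ uniformly in $n$, then pass to the limit --- but yours needs no truncation $u_n\wedge 1$, no Lemma \ref{s2l1}, no auxiliary minimization problems, and no per-$k$ subsequence extraction; your aside invoking Proposition \ref{s2p3} and Corollary \ref{s2c1} is in fact not even used. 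What the paper's version buys in exchange is an explicit approximating sequence $u^k\in H_0(\O)$ converging to $u$, a construction whose building blocks (the penalized minimizers dominated by $(k+1)w_{\O_n}$) are reused elsewhere in the paper (e.g.\ in the proof of Proposition \ref{s2p3} itself) and parallel the capacitary-measure techniques it is designed to replace. Your closing observation is also on point: the uniform $H$-bound enters exactly through $\int_S|D\phi_n|^2\le k^{-2}\|Du_n\|_{L^2}^2$, and without it the key factor $1/k$ in the $L^1$ estimate is lost.
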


\begin{proof}
First, taking $|u_n|$, we can suppose $u_n\ge0$ for every $n\ge1$. Moreover, by considering the sequence $u_n\wedge 1$, we can also suppose that $0\le u_n\le1$. For each $n,k\ge1$ we define on $L^2(X,m)$ the functional
$$F_{n,k}(v)=\frac{1}{2}\int_{\O_n}|Dv|^2d\m+\frac{1}{2}\int_{\O_n}|v|^2d\m-\int_{\O_n}vd\m+\frac{k}{2}\int_{\O_n}|v-u_n|^2d\m+\chi_{H_0(\O_n)}(v).$$
Denote by $u_{n}^{k}\in H_0(\O_n)$ the (unique) minimizer of $F_{n,k}$. By Lemma \ref{s2l1}, we have that
$$u_n^k\xrightarrow[k\to\infty]{L^2(X,m)}u_n.$$
Moreover, as in the proof of Proposition \ref{s2p3}, we have the inequality $u_{n}^{k}\le (k+1)w_{\O_n}$.

Observe that, since the sequence $(w_{\O_n})_{n\ge1}$ is weakly convergent in $H$ it is also bounded in the norm of $H$. Then, for each $k\ge1$, the sequence $(u_n^k)_{n\ge1}$ is also bounded in $H(X)$ and so, by extracting a subsequence, we can suppose that $u_n^k\xrightarrow[n\to\infty]{L^2}u^k$, for some $u_k\in H(X)$. Moreover, $u^k\in H_0(\O)$, since, passing to the limit as $n\to\infty$, we have $u^k\le(k+1)w$.

By the fact that $u_n^k$ is the minimizer of $F_{n,k}$, we have
$$F_{n,k}(u_n^k)\le F_{n,k}(u_n)=\frac{1}{2}\int_{\O_n}|Du_n|^2d\m+\frac{1}{2}\int_{\O_n}|u_n|^2d\m-\int_{\O_n}u_nd\m\le C,$$
where the last inequality is due to the hypothesis that $u_n$ is bounded in $H$. On the other hand
$$\begin{array}{ll}
F_{n,k}(u_n^k)&\ds=\frac{1}{2}\int_{\O_n}|Du_n^k|^2d\m+\frac{1}{2}\int_{\O_n}|u_n^k|^2d\m-\int_{\O_n}u_n^kd\m+\frac{k}{2}\int_{\O_n}|u_n^k-u_n|^2d\m\\
&\ds\ge\frac{1}{2}\int_{\O_n}|Dw_{\O_n}|^2d\m+\frac{1}{2}\int_{\O_n}|w_{\O_n}|^2d\m-\int_{\O_n}w_{\O_n}d\m+\frac{k}{2}\int_{\O_n}|u_n^k-u_n|^2d\m.
\end{array}$$
Putting the both inequalities together, we have
$$k\int_{\O_n}|u_n^k-u_n|^2d\m\le C-F_{\O_n}(w_{\O_n})\le C',$$
where the last inequality is due to the boundedness of the sequence $(w_{\O_n})_{n\ge1}$ in $H$.
In conclusion, writing $C$ instead of $C'$, we have 
$$\int_{X}|u_n^k-u_n|^2d\m\le\frac{C}{k},$$
and passing to the limit as $n\to\infty$,
$$\int_{X}|u^k-u|^2d\m\le\frac{C}{k}.$$
Thus, we found a sequence $(u^k)_{k\ge0}\subset H_0(\O)$ convergent in $L^2(X,m)$ to $u$. Then $u\in H_0(\O)$ by Definition \ref{s2d1}.
\end{proof}

We note that Proposition \ref{s3p1} is sufficient for the proof of the existence results Theorem \ref{sopphi} and Theorem \ref{main} (see Remark \ref{s3r1} and Proposition \ref{s4p2}). In order to prove the more general Theorem \ref{teoJ}, we need a make a more careful analysis of the relations between the $\gamma$ and the weak-$\gamma$ convergences. From here to the end of this section, we present an argument which allows us to avoid the notion of capacitary measures, which in the Euclidean setting are the closure of the energy sets with respect to the $\gamma$-convergence (see \cite{dmmo}, \cite{dmgar}, \cite{budm1}, \cite{budm2}, \cite{bdmexistence}). 

\begin{lemma}\label{s3l1}
Consider a sequence $\O_n$ of energy sets, which weak-$\gamma$ converges to the energy set $\O$. Suppose that for each $n\ge1$ we have that $\O\subset\O_n$. Then $w=w_\O$.
\end{lemma}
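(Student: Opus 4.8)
The plan is to show that the $L^2$-limit $w$ of the sequence $w_{\O_n}$ is itself the minimizer $w_\O$ of the energy functional $F_\O=F_\O^{1,1}$ over $H_0(\O)$, and then to invoke uniqueness. First I would record the two facts that weak-$\gamma$ convergence supplies: the sequence $w_{\O_n}$ converges to $w$ strongly in $L^2(X,\m)$ with $\{w>0\}=\O$, so in particular $w=0$ $m$-a.e. on $X\setminus\O$, i.e. $w\in H_0(\O)$ by Definition \ref{s2d1}; and, by the estimate \eqref{s2p1e1} with $a=f=1$, the sequence $(w_{\O_n})$ is bounded in $H$.

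The key use of the hypothesis $\O\subset\O_n$ is that $H_0(\O)\subset H_0(\O_n)$ for every $n$, so $w_\O$ is an admissible competitor in the minimization defining $w_{\O_n}$. Since for $u\in H_0(\O)$ the two functionals agree, $F_{\O_n}(u)=F_\O(u)$, the minimizing property of $w_{\O_n}$ gives $F_{\O_n}(w_{\O_n})\le F_{\O_n}(w_\O)=F_\O(w_\O)$. Next I would pass to the $\liminf$ as $n\to\infty$ on the left-hand side: the gradient term is handled by the lower semicontinuity assumption $(\HH3)$, while the zero-order term $\tfrac12\int w_{\O_n}^2\,d\m$ and the linear term $\int w_{\O_n}\,d\m$ pass to the limit by the strong $L^2$ convergence (the latter because $1\in L^2(X,\m)$, as $m$ is finite). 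Since $w\in H_0(\O)$, this yields $F_\O(w)\le\liminf_n F_{\O_n}(w_{\O_n})\le F_\O(w_\O)$. As $w_\O$ is the unique minimizer of $F_\O$ on $H_0(\O)$ (Proposition \ref{s2p1}), I conclude $w=w_\O$.

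The argument is essentially soft, so I do not expect a serious obstacle; the only points requiring care are the identification $w\in H_0(\O)$, which rests precisely on the defining property $\{w>0\}=\O$ of weak-$\gamma$ convergence, and the correct splitting in the lower semicontinuity step so that assumption $(\HH3)$ is applied only to the gradient term while the other two terms are treated by strong $L^2$ convergence. As a consistency check, one also obtains the inequality $w_\O\le w$ directly from the comparison principle Proposition \ref{s2p2}(a), since $\O\subset\O_n$ forces $w_\O\le w_{\O_n}$ and hence $w_\O\le w$ in the $L^2$ limit; this, however, is not needed for the equality.
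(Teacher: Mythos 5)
Your proof is correct, and it takes a genuinely different and substantially more elementary route than the paper's. You use the hypothesis $\O\subset\O_n$ only through the inclusion $H_0(\O)\subset H_0(\O_n)$, which makes $w_\O$ an admissible competitor for each $F_{\O_n}$ and yields the uniform bound $F_{\O_n}(w_{\O_n})\le F_{\O_n}(w_\O)=F_\O(w_\O)$; combined with $w\in H_0(\O)$ (which, as you note, rests on $\{w>0\}=\O$ together with $w\ge0$, though in fact only the inclusion $\{w>0\}\subset\O$ is needed) and the correctly split lower semicontinuity step --- $(\HH3)$ for the gradient term, strong $L^2$ convergence for the quadratic and linear terms, the latter legitimate since $m(X)<\infty$ makes $1\in L^2(X,\m)$ --- you get $F_\O(w)\le\liminf_n F_{\O_n}(w_{\O_n})\le F_\O(w_\O)$, and uniqueness of the minimizer (Proposition \ref{s2p1}) concludes. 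The paper instead runs a localized $\Gamma$-convergence argument: for $A=\{w>w_\O\}$ it introduces shifted functionals $F_{\O_n}^{(A)}$ and $F_{\O_n}^{(A^c)}$ whose unique minimizers vanish, extracts $\Gamma$-limits, bounds them from below by explicit functionals $G^{(A)},G^{(A^c)}$ built from $w$, and tests with $(w_\O-w_{\O_n})I_{A_n}$ to force $G^{(A)}(0)=G^{(A)}(w_\O-w)$, which by uniqueness of the minimizer of $G^{(A)}$ gives $m(\{w>w_\O\})=0$, i.e.\ $w\le w_\O$; the reverse inequality comes from the comparison principle of Proposition \ref{s2p2}(a). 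Amusingly, the inequality $w_\O\le w$ that you offer as a mere consistency check is exactly one half of the paper's actual proof. The paper's heavier machinery is in the spirit of the capacitary-measure analysis of \cite{dmgar} (which the authors state they wish to avoid importing wholesale) and sets up tools echoed in the subsequent propositions, but for the lemma as stated your direct minimality-plus-semicontinuity comparison proves the same conclusion with far less overhead and under a formally weaker hypothesis.
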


\begin{proof}
For a given Borel set $A\subset X$, consider the sequence of functionals
$$\begin{array}{ll}i
F_{\O_n}^{(A)}(u)&\ds=\frac{1}{2}\int_{A\cap\O_n}|D(u+w_{\O_n})|^2d\m+\frac{1}{2}\int_{A\cap\O_n}|u+w_{\O_n}|^2d\m\\
&\ds\qquad\qquad\qquad-\int_{A\cap\O_n}(u+w_{\O_n})d\m+\chi_{H_0(\O_n\cap A)}(u),\\
F_{\O_n}^{(A^c)}(u)&\ds=\frac{1}{2}\int_{A^c\cap\O_n}|D(u+w_{\O_n})|^2d\m+\frac{1}{2}\int_{A^c\cap\O_n}|u+w_{\O_n}|^2d\m\\
&\ds\qquad\qquad\qquad-\int_{A^c\cap\O_n}(u+w_{\O_n})d\m+\chi_{H_0(\O_n\cap A^c)}(u),
\end{array}$$
defined on $L^2(X,m)=L^2(A,m)\oplus L^2(A^c,m)$. If $u,v\in H_0(\O_n)$, then we have
$$F_{\O_n}^{(A)}(u)+F_{\O_n}^{(A^c)}(v)=
\begin{cases}
\infty&\hbox{if $u\notin H_0(A)$ or }v\notin H_0(A^c),\\
F_{\O_n}(u+v+w_n)&\hbox{otherwise.}
\end{cases}$$
Then, by the uniqueness of the minimizer of $F_{\O_n}$, we have that $F_{\O_n}^{(A)}$ and $F_{\O_n}^{(A^c)}$ admit unique minimizers, both equal to $0$. We can assume, up to a subsequence, that there are functionals $F^{(A)}$ and $F^{(A^c)}$ on $L^2(X,m)$ such that
$$F_{\O_n}^{(A)}\xrightarrow[]{\Gamma}F^{(A)}\qquad\hbox{and}\qquad
F_{\O_n}^{(A^c)}\xrightarrow[]{\Gamma}F^{(A^c)},$$
where the $\Gamma$-convergence is for the functionals defined on the metric space $L^2(X,m)$. Observe that for each $u_n\xrightarrow[]{L^2(X,m)} u$, we have
$$\liminf_{n\to\infty}F_{\O_n}^{(A)}(u_n)\ge G^{(A)}(u),$$
where we defined
$$\begin{array}{ll}
&\ds G^{(A)}(u)=\frac{1}{2}\int_A|D(u+w)|^2d\m+\frac{1}{2}\int_A|u+w|^2d\m-\int_A(u+w)d\m+\chi_{H_0(\O\cap A)}(u),\\
\\
&\ds G^{(A^c)}(u)=\frac{1}{2}\int_{A^c}|D(u+w)|^2d\m+\frac{1}{2}\int_{A^c}|u+w|^2d\m-\int_{A^c}(u+w)d\m+\chi_{H_0(\O\cap A^c)}(u).
\end{array}$$
As a consequence, we have
$$F^{(A)}(u)\ge G^{(A)}(u)\qquad\hbox{and}\qquad
F^{(A^c)}(u)\ge G^{(A^c)}(u).$$
Suppose that $A=\{w>w_\O\}$ and $A^c=\{w\le w_\O\}$. Then the functionals $G^{(A)}$ and $G^{(A^c)}$ have unique minimizers $(w_\O-w)I_A\in H^1_0(A)$ and $(w_\O-w)I_{A^c}\in H^1_0(A^c)$. Consider the functions $(w_\O-w_{\O_n})I_{A_n}$, where $A_n=\{w_{\O_n}>w_\O\}$. Since $A_n\subset A$ and $\O\subset\O_n$, we have that $(w_\O-w_{\O_n})I_{A_n}\in H_0(\O_n)\cup H_0(A)$
$$F_{\O_n}^{(A)}\left(I_{A_n}(w_\O-w_{\O_n})\right)=\frac{1}{2}\int_{A}|D(w_\O\wedge w_{O_n})|^2d\m+\frac{1}{2}\int_{A}|w_\O\wedge w_{O_n}|^2d\m-\int_{A}w_\O\wedge w_{O_n} d\m,$$
By the definition of the $\Gamma$-limit and the fact that 
$$w_\O\wedge w_{\O_n}\xrightarrow[n\to\infty]{L^2(X,m)}w_\O\wedge w,$$ 
we have
\be
\begin{array}{ll}
F^{(A)}\left((w_\O-w)I_A\right)&\le\liminf_{n\to\infty}F^{(A)}_{\O_n}\left((w_\O-w_{\O_n})I_{A_n}\right)\\
\\
&\le\frac{1}{2}\int_{A}|D(w_\O\wedge w)|^2d\m+\frac{1}{2}\int_A|w_\O\wedge w|^2d\m-\int_{A}w_\O\wedge w d\m\\
\\
&=G^{(A)}((w_\O-w)I_A)=G^{(A)}(w_\O-w).
\end{array}
\ee

Recall that, since $w_\O-w$ is a minimizer for $G^{(A)}$, we have the opposite inequality, and so the equality
$$F^{(A)}(w_\O-w)=G^{(A)}(w_\O-w).$$
From the other side $0$ is a minimizer for $F^{(A)}$, and so we have
$$F^{(A)}(w_\O-w)\ge F^{(A)}(0)\ge G^{(A)}(0)\ge G^{(A)}(w_\O-w).$$
Thus, we obtain the following equality
$$G^{(A)}(0)=G^{(A)}(w_\O-w),$$
from which, by the uniqueness of the minimizer, the proof of the lemma is concluded.
\end{proof}

\begin{prop}\label{s3p2}
Suppose that the sequence $\O_n$ of energy sets weak-$\gamma$ converges to $\O$ and let $w=\lim_{n\rightarrow\infty}w_{\O_n}$, where the limit is strong in $L^2(X,m)$. Then we have $w\le w_\O$.
\end{prop}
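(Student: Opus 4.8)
The plan is to show that the set $A=\{w>w_\O\}$ is $m$-negligible, which is exactly the assertion $w\le w_\O$. Since each $w_{\O_n}\ge0$ (Proposition \ref{s2p1}), the strong $L^2$ limit satisfies $w\ge0$, and by Proposition \ref{s3p1} applied to $u_n=w_{\O_n}$ (which is bounded in $H$ by \eqref{s2p1e1} and converges in $L^2$ to $w\in H$) we have $w\in H_0(\O)$. As $w_\O\ge0$, on $A$ we have $w>w_\O\ge0$, so $A\subset\{w>0\}=\O$; hence it suffices to prove $m(A)=0$.

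First I would reproduce the localization from the proof of Lemma \ref{s3l1}, choosing there $A=\{w>w_\O\}$. This provides the localized functionals $F^{(A)}_{\O_n}$, their $\Gamma$-limit $F^{(A)}$ on $L^2(X,\m)$, the lower bound $F^{(A)}\ge G^{(A)}$ coming from the localized lower semicontinuity of the gradient, the fact that $0$ is the unique minimizer of $F^{(A)}$ (because $w_{\O_n}$ minimizes $F_{\O_n}$), and the fact that $(w_\O-w)I_A$ is the unique minimizer of $G^{(A)}$. All of this is independent of any inclusion relation between $\O$ and $\O_n$.

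The key step is the recovery estimate. I would test $F^{(A)}_{\O_n}$ with $u_n=(w_\O-w_{\O_n})I_{A_n}$, where $A_n=\{w_{\O_n}>w_\O\}$. The point, and the reason the present statement does not need $\O\subset\O_n$, is that $u_n\in H_0(\O_n\cap A)$ in any case: off $\O_n$ one has $w_{\O_n}=0$, so $w_{\O_n}>w_\O$ would force $w_\O<0$, impossible, whence $I_{A_n}=0$ and $u_n=0$ there; moreover $A_n\subset A$ gives $u_n\in H_0(A)$. Since $u_n+w_{\O_n}=w_\O\wedge w_{\O_n}\to w_\O\wedge w$ in $L^2(X,\m)$, the $\Gamma$-liminf inequality together with the explicit computation of $F^{(A)}_{\O_n}(u_n)$ as the energy of $w_\O\wedge w_{\O_n}$ on $A$ yields $F^{(A)}\big((w_\O-w)I_A\big)\le G^{(A)}\big((w_\O-w)I_A\big)$, exactly as in Lemma \ref{s3l1}.

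Finally I would assemble the chain $G^{(A)}(0)\le F^{(A)}(0)\le F^{(A)}\big((w_\O-w)I_A\big)\le G^{(A)}\big((w_\O-w)I_A\big)=\min G^{(A)}$, where the first inequality is $F^{(A)}\ge G^{(A)}$ and the second is the minimality of $0$ for $F^{(A)}$. This forces $G^{(A)}(0)=\min G^{(A)}$, so $0$ is a minimizer of $G^{(A)}$; by strict convexity (uniqueness of the minimizer) $0=(w_\O-w)I_A$, i.e. $w=w_\O$ on $A=\{w>w_\O\}$, which is possible only if $m(A)=0$. Thus $w\le w_\O$. The hard part is the localized lower semicontinuity encoded in $F^{(A)}\ge G^{(A)}$ together with the admissibility and energy computation of the recovery competitor; once the machinery of Lemma \ref{s3l1} is reused, the only new observation is that the hypothesis $\O\subset\O_n$ there served solely to obtain the reverse inequality $w\ge w_\O$ via the comparison principle (Proposition \ref{s2p2}(a)), and plays no role in the direction proved here.
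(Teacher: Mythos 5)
Your proposal is correct in its overall logic but takes a genuinely different route from the paper's. The paper never re-opens Lemma \ref{s3l1}: it proves the proposition by a truncation argument, introducing $\O_n^\eps=\{w_{\O_n}>\eps\}$ with $w_{\O_n^\eps}=(w_{\O_n}-\eps)\vee0$, passing to the auxiliary sets $\O_n^\eps\cup\O$ --- which contain $\O$ \emph{by construction}, so that Lemma \ref{s3l1} applies as a black box with its hypothesis genuinely satisfied --- using the cut-offs $v_n^\eps=1-\frac{1}{\eps}(w_{\O_n}\wedge\eps)$ to show that the limit $w^\eps$ of $w_{\O_n^\eps\cup\O}$ belongs to $H_0(\O)$, deducing $w^\eps\le w_\O$ from the lemma together with the comparison principle (Proposition \ref{s2p2}), and finally squeezing $(w-\eps)\vee0\le w^\eps\le w_\O$ and letting $\eps\to0$. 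You instead rerun the interior of Lemma \ref{s3l1} with the hypothesis $\O\subset\O_n$ deleted. Your diagnosis of where that hypothesis enters is accurate, and your replacement of its first use is sound and is the real new content: since $w_\O\ge0$, one has $A_n=\{w_{\O_n}>w_\O\}\subset\O_n$ $m$-a.e., so $u_n=(w_\O-w_{\O_n})\wedge0\in H_0(\O_n)$ unconditionally; likewise the identification of the minimizers of $G^{(A)}$ and $G^{(A^c)}$ needs only $A=\{w>w_\O\}\subset\{w>0\}=\O$ and $w\in H_0(\O)$, both supplied by the weak-$\gamma$ convergence. Your route buys a shorter proof, with no truncation, no cut-offs and no $\eps$-limit, and it exposes that the inclusion hypothesis of the lemma feeds only the reverse inequality $w\ge w_\O$.

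One caveat, which you should not pass over: you assert ``$A_n\subset A$'' as if it were automatic, but the inclusion $\{w_{\O_n}>w_\O\}\subset\{w>w_\O\}$ does not follow from anything at hand --- $w_{\O_n}\to w$ only in $L^2(X,m)$, and strong $L^2$ convergence gives no pointwise comparison (one can have $w_{\O_n}>w_\O$ for all $n$ on a set where $w=w_\O$). Without $A_n\subset A$ $m$-a.e., the competitor $u_n$ is not admissible for $F^{(A)}_{\O_n}$ and the recovery estimate collapses; the same applies to the upper bound on $\liminf_n F^{(A)}_{\O_n}(u_n)$ that you take over verbatim. To be fair, the paper's own proof of Lemma \ref{s3l1} asserts the same inclusion without justification, so your argument is exactly as rigorous as the proof you are transplanting. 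But note the structural difference: the paper's proof of this proposition leans only on the \emph{statement} of the lemma, whereas yours leans on its \emph{proof}, so this weak point, quarantined there, becomes load-bearing here and would need to be repaired for your one-sided version of the lemma to stand on its own.
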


\begin{proof}
For each $w_{\O_n}$ consider the energy set $\O_n^\eps=\{w_{\O_n}>\eps\}$. Then we have a simple expression for $w_{\O_n^\eps}$:
$$w_{\O_n^\eps}=(w_{\O_n}-\eps)\vee0.$$
(Otherwise, we would have a contradiction with the uniqueness of the minimizer of $F_{\O_n}$.)
But then, by the dominated convergence theorem, for a certain subsequence, we have $$w_{\O_n^\eps}\xrightarrow[n\to\infty]{}(w-\eps)\vee0.$$

By extracting a further subsequence, we can suppose 
$$w_{\O_n^\eps\cup\O}\xrightarrow[n\to\infty]{L^2}w^\eps,$$
for some $w^\eps\in H$. Moreover, again by the dominated convergence theorem, we have that $v_n^\eps\to v^\eps$ in $L^2(X)$, where
$$v^\eps_n=1-\frac{1}{\eps}(w_{\O_n}\wedge\eps),$$
$$v^\eps=1-\frac{1}{\eps}(w\wedge\eps),$$
and, as a consequence 
$$v_n^\eps\wedge w_{\O_n^\eps\cup\O}\xrightarrow[n\to\infty]{L^2(X)}v^\eps\wedge w^\eps.$$
Observe that 
$$v_n^\eps=0\hbox{ on }\O_n^\eps,\qquad w_{\O_n^\eps\cup\O}=0\hbox{ on }X\setminus(\O_n^\eps\cup\O),$$
and so we have 
$$v_n^\eps\wedge w_{\O_n^\eps\cup\O}=0\hbox{ on }X\setminus\O,$$
that is $v_n^\eps\wedge w_{\O_n^\eps\cup\O}\in H_0(\O)$ and so the limit $v^\eps\wedge w^\eps\in H_0(\O)$. Since $v^\eps=1$ on $X\setminus\O$, we have that $w^\eps\in H_0(\O)$.
Then, by the preceding lemma and the maximum principle, we have that $w^\eps\le w_\O$. From the other side, passing to the limit in the inequality $w_{\O_n^\eps}\le w_{\O_n^\eps\cup\O}$, we have
$$(w-\eps)\vee0\le w^\eps\le w_\O,$$
for each $\eps>0$. In conclusion, since $(w-\eps)\vee0\xrightarrow[\eps\to0]{L^2}w$, we have $w\le w_\O$ as required.
\end{proof}

Now we can prove the following result, which is analogous to Lemma 4.10 of \cite{buremc}.

\begin{prop}\label{s3p3}
Suppose that $(\O_n)_{n\ge1}$ is a sequence of energy sets which weak-$\gamma$-converges to the energy set $\O$. Then, there exists a sequence of energy sets $(\O'_n)_{n\ge1}$ $\gamma$-converging to $\O$ such that for each $n\ge1$ we have that $\O_n\subset\O'_n$.
\end{prop}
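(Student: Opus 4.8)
The plan is to take $\O'_n:=\O_n\cup\O$, so that the containment $\O_n\subset\O'_n$ is automatic and the whole content becomes checking that the $\O'_n$ are energy sets and that they $\gamma$-converge to $\O$. That $\O'_n$ is an energy set follows from the strong maximum principle: since $w_{\O_n}\in H_0(\O_n)$ and $w_\O\in H_0(\O)$ both lie in $H_0(\O_n\cup\O)$, Proposition \ref{s2p3} gives $\O_n\cup\O=\{w_{\O_n}>0\}\cup\{w_\O>0\}\subset\{w_{\O'_n}>0\}$ up to $m$-null sets, and the reverse inclusion always holds, so $m(\O'_n\setminus\{w_{\O'_n}>0\})=0$.

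Next I would extract a limit. By Remark \ref{s3r1} the sequence $(\O'_n)$ has, up to a subsequence, a weak-$\gamma$ limit $\O'=\{w'>0\}$ with $w_{\O'_n}\to w'$ in $L^2(X,m)$. Since $\O\subset\O'_n$, the comparison principle Proposition \ref{s2p2}(a) gives $w_\O\le w_{\O'_n}$, hence $w_\O\le w'$ and $\O\subset\O'$. It remains to prove the opposite inclusion $\O'\subset\O$, i.e. that $w'=0$ $m$-a.e. on $X\setminus\O$.

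To obtain this upper bound I would establish the subadditivity estimate $w_{\O_n\cup\O}\le w_{\O_n}+w_\O$. Granting it, on $X\setminus\O$ we have $w_\O=0$, so $w_{\O'_n}\le w_{\O_n}$ there; passing to the limit and using that the weak-$\gamma$ hypothesis on $(\O_n)$ forces $w:=\lim_n w_{\O_n}$ to vanish on $X\setminus\O=\{w=0\}$, we get $w'\le w=0$ a.e. outside $\O$, while $w'\ge0$ by Proposition \ref{s2p1}. Hence $\O'=\O$, so $(\O'_n)$ weak-$\gamma$-converges to $\O$ itself; since moreover $\O\subset\O'_n$ for every $n$, Lemma \ref{s3l1} upgrades this to $w'=w_\O$. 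As every subsequential $L^2$-limit of $(w_{\O'_n})$ then equals $w_\O$, the full sequence converges, i.e. $\O'_n$ $\gamma$-converges to $\O$.

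The main obstacle is precisely the subadditivity $w_{\O_n\cup\O}\le w_{\O_n}+w_\O$. In the Euclidean linear case it is immediate, because $w_{\O_n}+w_\O$ is a supersolution; here $D$ is only sublinear (property $D2$), so there is no linear equation to compare with. I would instead argue variationally, in the spirit of Proposition \ref{s2p2}: decompose $X$ into $\{w_{\O_n\cup\O}>w_{\O_n}+w_\O\}$ and its complement, and combine the minimality of the three solutions $w_{\O_n\cup\O}$, $w_{\O_n}$, $w_\O$ against the competitors obtained by the lattice operations $\vee,\wedge$, exploiting the splitting $D4$ and the sublinearity $D(w_{\O_n}+w_\O)\le Dw_{\O_n}+Dw_\O$ to force that set to be $m$-negligible. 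Should this comparison prove delicate, an alternative is to bypass subadditivity altogether and replay, directly for the sets $\O_n\cup\O$, the $\Gamma$-convergence and truncation scheme already used in the proof of Proposition \ref{s3p2}.
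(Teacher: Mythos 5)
Your reduction to the subadditivity estimate $w_{\O_n\cup\O}\le w_{\O_n}+w_\O$ is where the argument breaks down, and this is a genuine gap, not a routine verification. The surrounding skeleton is fine (the union of energy sets is an energy set via Proposition \ref{s2p3}; $w_\O\le w'$ by Proposition \ref{s2p2}(a); Lemma \ref{s3l1} plus a subsequence argument would finish). But the variational scheme you sketch for subadditivity does not close: the excess function $\varphi=(w_{\O_n\cup\O}-w_{\O_n}-w_\O)^+$ is supported in $\O_n\cup\O$ but in general in neither $\O_n$ nor $\O$ separately, so there is no admissible competitor in $H_0(\O_n)$ or $H_0(\O)$ against which to test the minimality of $w_{\O_n}$ or $w_\O$; truncating $w_{\O_n\cup\O}$ at $w_{\O_n}+w_\O$ only yields an inequality of the wrong direction. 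Property $D4$ splits gradients of $u\vee v$ and $u\wedge v$, not of sums, and $D2$ goes the wrong way for the comparison you need. In the Euclidean case subadditivity is proved by \emph{adding the two equations}: $-\Delta(w_A+w_B)+(w_A+w_B)\ge 1$ on $A\cup B$ as distributions, i.e.\ $w_A+w_B$ is a supersolution, and one concludes by the weak maximum principle. That proof is irreducibly linear; in the present setting there is no Euler--Lagrange equation at all ($D$ is only sublinear), which is precisely why the paper's Section \ref{s3} is engineered to avoid any such comparison with sums.

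Your fallback ("replay the scheme of Proposition \ref{s3p2} directly for the sets $\O_n\cup\O$") also fails as stated: that scheme needs a function in $H$ vanishing on the added set and equal to $1$ on $X\setminus\O$, and no such cutoff exists if the added set is all of $\O$. The paper's actual proof resolves exactly this by shrinking $\O$ rather than $\O_n$: it takes $\O'_n=\O_n\cup\O_\eps$ with $\O_\eps=\{w_\O>\eps\}$, uses the cutoff $v_\eps=1-\frac1\eps(w_\O\wedge\eps)$ (which vanishes on $\O_\eps$ and equals $1$ on $X\setminus\O$) so that $w_{\O_n\cup\O_\eps}\wedge v_\eps\in H_0(\O_n)$, passes to the limit to get that the limit $w_\eps$ of $w_{\O_n\cup\O_\eps}$ is supported in $\O$, sandwiches $(w_\O-\eps)\vee 0\le w_\eps\le w_\O$ via the maximum principle and Proposition \ref{s3p2}, and finally diagonalizes in $\eps$. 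Note the resulting sets $\O_n\cup\O_{\eps_n}$ still contain $\O_n$, as required, but need not contain $\O$ — your choice $\O_n\cup\O$ is strictly more ambitious than what the statement demands, and it is exactly that extra ambition that forces the unproven subadditivity. If you want to salvage your route, prove the sandwich for $\O_n\cup\O_\eps$ instead; at that point you have reproduced the paper's proof.
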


\begin{proof}
Consider, for each $\eps>0$, the sequence of minimizers $w_{\O_n\cup\O^\eps}$, where $\O_\eps=\{w_\O>\eps\}$.
We can suppose that for each (rational) $\eps>0$ the sequence is convergent in $L^2(X,m)$ to a positive function $w_\eps\in H$.

Consider the function $v_\eps=1-\frac{1}{\eps}(w_\O\wedge\eps)$ which is equal to $0$ on $\O_\eps$ and to $1$ on $X\setminus\O$. Then we have that $w_{\O_n\cup\O^\eps}\wedge v_\eps$ is supported on $\O_n$. Then the $L^2$-limit (which exists thanks to the dominated convergence theorem) $w_\eps\wedge v_\eps$ is supported on $\O$. In conclusion, we have that $w_\eps$ is supported on $\O$ and, by the maximum principle and the proposition above, we have $w_\eps\le w_\O$. 

From the other side, again by the maximum principle, we have $(w_\O-\eps)\vee 0\le w_\eps$. Now we can conclude by a diagonalization argument.
\end{proof}

%%%%%%%%%%%%%%%%%%%%%%%%%%%%%%%%%%%%%%%%%%%%%
\section{Functionals defined on the class of energy sets}\label{s4}

We denote with $\E$ the family of energy sets in $X$. Suppose that 
$$J:\E\to[0,+\infty],$$
is a functional on the family of energy sets such that:

\begin{enumerate}[(J1)]

\item $J$ is lower semicontinuous (shortly, l.s.c.) with respect to the $\gamma$-convergence, that is
$$J(\O)\le\liminf_{n\to\infty}J(\O_n)\qquad
\hbox{whenever }\O_n\xrightarrow{\gamma}\O.$$
\item $J$ is monotone decreasing with respect to the inclusion, that is
$$J(\O_1)\ge J(\O_2)\qquad
\hbox{whenever }\O_1\subset\O_2.$$
\end{enumerate}

\begin{lemma}\label{s4l1}
If $J$ verifies the hypothesis (J1) and (J2) above, then $J$ is l.s.c. with respect to the weak-$\gamma$-convergence.
\end{lemma}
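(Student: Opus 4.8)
The plan is to reduce weak-$\gamma$ lower semicontinuity to the $\gamma$-lower semicontinuity already granted by (J1), using the comparison sequence produced by Proposition \ref{s3p3}. First I would fix a sequence of energy sets $\O_n$ weak-$\gamma$-converging to $\O$, and aim to establish $J(\O)\le\liminf_{n\to\infty}J(\O_n)$. Passing to a subsequence realizing this $\liminf$ is harmless, so I may assume $\lim_{n\to\infty}J(\O_n)$ exists.

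By Proposition \ref{s3p3} there exists a sequence $(\O'_n)_{n\ge1}$ of energy sets which $\gamma$-converges to $\O$ and satisfies $\O_n\subset\O'_n$ for every $n$. This is the crucial device: it trades the weak-$\gamma$ convergence, under which $J$ cannot be controlled directly, for a genuine $\gamma$-convergence, at the price of enlarging each domain. All the analytic content has thereby been absorbed into Proposition \ref{s3p3} (and the auxiliary Lemma \ref{s3l1} and Proposition \ref{s3p2} behind it).

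Next I would invoke the monotonicity (J2): since $\O_n\subset\O'_n$ and $J$ is decreasing with respect to inclusion, we get $J(\O'_n)\le J(\O_n)$ for every $n$, and hence $\liminf_{n\to\infty}J(\O'_n)\le\liminf_{n\to\infty}J(\O_n)$. Finally, applying the $\gamma$-lower semicontinuity (J1) to the sequence $\O'_n\xrightarrow{\gamma}\O$ yields $J(\O)\le\liminf_{n\to\infty}J(\O'_n)$. Chaining the two inequalities gives $J(\O)\le\liminf_{n\to\infty}J(\O'_n)\le\liminf_{n\to\infty}J(\O_n)$, which is exactly weak-$\gamma$ lower semicontinuity.

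There is essentially no remaining obstacle once Proposition \ref{s3p3} is in hand; the argument is purely formal. The only point requiring care is to apply the monotonicity in the correct direction — the larger set $\O'_n$ must receive the smaller value $J(\O'_n)\le J(\O_n)$ — so that the enlargement $\O'_n\supset\O_n$ works in our favour when the two $\liminf$ inequalities are composed.
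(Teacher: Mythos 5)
Your proposal is correct and follows exactly the paper's argument: invoke Proposition \ref{s3p3} to obtain energy sets $\O'_n\supset\O_n$ with $\O'_n\xrightarrow{\gamma}\O$, then chain the monotonicity (J2) with the $\gamma$-lower semicontinuity (J1) to get $J(\O)\le\liminf_n J(\O'_n)\le\liminf_n J(\O_n)$. The paper's proof is precisely this two-line chain, so there is nothing to add.
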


\begin{proof}
Suppose that $\O_n\xrightarrow[n\to\infty]{weak-\gamma}\O$. By Proposition \ref{s3p3}, there exists a sequence of energy sets $(\O'_n)_{n\ge1}$ such that $\O'_n\xrightarrow[n\to\infty]{\gamma}\O$ and $\O_n\subset\O'_n$. 
Thus we have 
$$J(\O)\le\liminf_{n\to\infty}J(\O'_n)\le\liminf_{n\to\infty}J(\O_n).$$
\end{proof}

\begin{lemma}\label{s4l2}
The map $m:\O\mapsto m(\O)$, defined on the family of energy sets $\E$, is l.s.c. with respect to the weak-$\gamma$-convergence.
\end{lemma}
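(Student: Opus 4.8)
The plan is to reduce the statement to Fatou's lemma applied to the indicator functions of the positivity sets. Recall that, by the definition of weak-$\gamma$-convergence, the functions $w_{\O_n}$ converge strongly in $L^2(X,m)$ to some $w\in H$ with $\{w>0\}=\O$; moreover, since each $\O_n$ is an energy set, Definition \ref{s2d2} together with the inclusion $\{w_{\O_n}>0\}\subset\O_n$ (which holds because $w_{\O_n}\in H_0(\O_n)$) gives $m(\O_n)=m(\{w_{\O_n}>0\})$, and of course $m(\O)=m(\{w>0\})$. Thus it suffices to prove
$$m(\{w>0\})\le\liminf_{n\to\infty}m(\{w_{\O_n}>0\}).$$

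First I would pass to subsequences. Set $L=\liminf_{n\to\infty}m(\O_n)$ and choose a subsequence along which $m(\O_{n_k})\to L$. Since $w_{\O_{n_k}}\to w$ strongly in $L^2(X,m)$, a further subsequence (not relabeled) converges pointwise $m$-a.e. to $w$. The key pointwise observation is then that at every point $x$ where $w(x)>0$ and $w_{\O_{n_k}}(x)\to w(x)$, one has $w_{\O_{n_k}}(x)>0$ for all large $k$; hence
$$I_{\{w>0\}}(x)\le\liminf_{k\to\infty}I_{\{w_{\O_{n_k}}>0\}}(x)\qquad\hbox{for $m$-a.e. }x\in X.$$

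Finally I would integrate this inequality over $X$ and apply Fatou's lemma to the nonnegative functions $I_{\{w_{\O_{n_k}}>0\}}$:
$$m(\{w>0\})=\int_X I_{\{w>0\}}\,dm\le\int_X\liminf_{k\to\infty}I_{\{w_{\O_{n_k}}>0\}}\,dm\le\liminf_{k\to\infty}m(\{w_{\O_{n_k}}>0\})=L,$$
the last equality holding because the subsequence $m(\O_{n_k})$ already converges to $L$. Since $m(\O)=m(\{w>0\})$, this yields $m(\O)\le L=\liminf_{n\to\infty}m(\O_n)$, as required. This argument uses only the strong $L^2$-convergence built into the weak-$\gamma$-convergence, together with the energy-set identification $\O=\{w>0\}$; neither monotonicity nor the lower semicontinuity of the gradient is needed. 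The only technical point to treat with care is the double subsequence extraction, guaranteeing simultaneously $m(\O_{n_k})\to L$ and the $m$-a.e. convergence of $w_{\O_{n_k}}$, so that Fatou may be legitimately invoked against the $\liminf$ computed along the original sequence.
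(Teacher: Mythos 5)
Your proof is correct and is essentially the paper's own argument: pass to a subsequence of $w_{\O_n}$ converging pointwise $m$-a.e.\ to $w$ and apply Fatou's lemma to the indicator functions $1_{\{w_{\O_n}>0\}}$, using $m(\O_n)=m(\{w_{\O_n}>0\})$ for energy sets. If anything, your explicit double extraction (first realizing the liminf along a subsequence, then extracting a.e.\ convergence) tightens a step the paper passes over with a bare ``up to a subsequence.''
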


\begin{proof}
Consider a weak-$\gamma$ converging sequence $\O_n\xrightarrow[n\to\infty]{weak-\gamma}\O$ and the function $w\in H$ such that $\{w>0\}=\O$ and $w_{\O_n}\to w$ in $L^2(X,m)$. Up to a subsequence, we can assume that $w_{\O_n}(x)\to w(x)$ for each $x\in X$. Then by Fatou lemma
$$m(\O)=\int_X1_{\{w>0\}}dm\le\liminf_n\int_X1_{\{w_{\O_n}>0\}}dm=\liminf_n m(\O_n)$$
as required.
\end{proof}

\begin{teo}\label{teoJ}
Suppose that $J:\E\to[0,+\infty)$ is a functional on the family of energy sets which verifies the hypothesis (J1) and (J2). Then for each $c\le m(X)$, there exists an energy set $\O_c$ of measure at most $c$ which is a solution of the problem 
\be\label{sopJ}
J(\O_c)=\inf\{J(\O):m(\O)\le c,\ \O\in \E\}.
\ee
\end{teo}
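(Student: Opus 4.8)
The plan is to use the direct method of the calculus of variations, now phrased in terms of the weak-$\gamma$-convergence, exploiting the compactness stated in Remark \ref{s3r1} together with the two semicontinuity lemmas just proved. Let $I=\inf\{J(\O):m(\O)\le c,\ \O\in\E\}$; since $J$ takes values in $[0,+\infty)$, we have $I<+\infty$, and the admissible class is nonempty (for instance $\O=\emptyset$ or any energy set of small measure, and $c\le m(X)$ guarantees at least $X$ itself is admissible when $c=m(X)$). Choose a minimizing sequence $(\O_n)_{n\ge1}$ of admissible energy sets, so that $m(\O_n)\le c$ for every $n$ and $J(\O_n)\to I$.

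The first step is to extract a convergent subsequence. By Remark \ref{s3r1}, the family of energy sets is sequentially compact with respect to the weak-$\gamma$-convergence, so after passing to a subsequence (not relabelled) there is an energy set $\O_c$ with $\O_n\xrightarrow{weak\text{-}\gamma}\O_c$. The second step is to check that the constraint passes to the limit: by Lemma \ref{s4l2}, the measure $m(\cdot)$ is lower semicontinuous along weak-$\gamma$-convergence, hence
$$m(\O_c)\le\liminf_{n\to\infty}m(\O_n)\le c,$$
so $\O_c$ is admissible. The third step is to pass to the limit in the functional: by hypothesis $J$ satisfies (J1) and (J2), so Lemma \ref{s4l1} tells us $J$ is lower semicontinuous with respect to the weak-$\gamma$-convergence, whence
$$J(\O_c)\le\liminf_{n\to\infty}J(\O_n)=I.$$
Since $\O_c$ is admissible we also have $J(\O_c)\ge I$, and therefore $J(\O_c)=I$, which is exactly \eqref{sopJ}.

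The genuinely substantive content of this argument has already been discharged in the preceding sections: all three ingredients — weak-$\gamma$ sequential compactness, lower semicontinuity of the measure, and lower semicontinuity of $J$ — are supplied by Remark \ref{s3r1}, Lemma \ref{s4l2}, and Lemma \ref{s4l1} respectively. Consequently the proof of the theorem itself is a short and essentially routine assembly of these pieces. The step I would flag as the conceptual heart of the matter, and the place where the real work lies, is the passage from (J1)+(J2) to weak-$\gamma$-lower-semicontinuity in Lemma \ref{s4l1}; that lemma in turn rests on Proposition \ref{s3p3}, which produces a $\gamma$-converging sequence $\O_n'\supset\O_n$ and thereby reconciles the weaker (compact) convergence with the finer $\gamma$-convergence along which $J$ is assumed continuous. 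Once those structural results are granted, no further obstacle remains and the extraction-plus-semicontinuity scheme closes the argument immediately.
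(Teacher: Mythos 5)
Your proof is correct and follows essentially the same route as the paper's: a minimizing sequence, weak-$\gamma$ compactness from Remark \ref{s3r1}, and the two semicontinuity results (Lemma \ref{s4l2} for the measure constraint and Lemma \ref{s4l1} for $J$) to pass to the limit. Your closing remark correctly identifies that the substantive work lives in Lemma \ref{s4l1} via Proposition \ref{s3p3}, which is precisely how the paper structures the argument as well.
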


\begin{proof}
Suppose that $(\O_n)_{n\ge1}$ is a minimizing sequence of energy sets of measure at most $c$. There is a weak-$\gamma$-converging subsequence which we still denote in the same way, i.e.
$$\O_n\xrightarrow[n\to\infty]{weak-\gamma}\O.$$
By Lemma \ref{s4l1} and Lemma \ref{s4l2}, we have
$$m(\O)\le \liminf_{n\to\infty}m(\O_n)\le c,$$
$$J(\O)\le\liminf_{n\to\infty}J(\O_n).$$
Then $\O$ is the desired minimizer of $J$.
\end{proof}

\begin{oss}\label{s4r1}
Let $J:\B(X)\to\R$ be a functional, defined on the family of Borel sets $\B(X)$ of $X$, of the form $J(\O)=F(H_0(\O))$, where $F$ is a functional defined on the closed linear subspaces of $H$. Then, in view of Corollary \ref{s2c1} and Remark \ref{s2r1}, $J$ is uniquely determined by its restriction on the energy sets $\E\subset\B(X)$. In particular, if $\O\in \E$ is a solution of the shape optimization problem \eqref{sopJ}, then it is also a solution of 
\be\label{s4r1e1}
\min\Big\{J(\O):\ \O\in\B(X),\ m(\O)\le c\Big\}.
\ee 
The functionals $\lambda_k$ and $E$, from Definition \ref{lb} and Definition \ref{en} below, are of this form.
\end{oss}

The main functionals we are interested in are generalizations of the first eigenvalue of the Dirichlet Laplacian and the Dirichlet energy of an open set in $\R^d$. Before we continue, we restate Definitions \ref{s1e1} and \ref{s1e2} with the generic operator $D$ instead of the generalized weak upper gradient and the space $H$ instead of the Sobolev space $H^1(X,m)$. 

\begin{deff}\label{lb}
For each Borel set $\O\in\B(X)$ the ``first eigenvalue of the Dirichlet Laplacian'' on $\O$ is defined as
\be\label{lb1}
\lambda_{1}(\O)=\inf\Big\{\int_\O|Du|^2d\m:\ u\in H_0(\O),\ \int_\O u^2d\m=1\Big\}.
\ee

More generally, we can define $\lambda_k(\O)$, for each $k>0$, as
\be\label{lbk}
\lambda_{k}(\O)=\inf_{K\subset H_0(\O)}\sup\Big\{\int_\O|Du|^2d\m:\ u\in K,\ \int_\O u^2d\m=1\Big\},
\ee
where the infimum is over all $k$-dimensional linear subspaces $K$ of $H_0(\O)$.
\end{deff}

\begin{deff}\label{en}
For each Borel set $\O\in\B(X)$ the Dirichlet Energy of $\O$ is defined as
\be\label{eneq}
E(\O)=\inf\Big\{\frac{1}{2}\int_\O|Du|^2d\m+\frac{1}{2}\int_\O u^2d\m-\int_\O ud\m:\ u\in H_0(\O)\Big\}.
\ee
\end{deff}

\begin{prop}\label{s4p1}
For each energy set $\O\subset X$ of positive measure, there is a function $u_\O\in H_0(\O)$ with $\|u_\O\|_{L^2}=1$ and such that $\int_\O|Du|^2d\m=\lambda_1(\O)$. More generally, for each $k>0$, there are functions $u_1,\dots,u_k\in H_0(\O)$ such that:
\begin{enumerate}[(a)]
\item $\|u_j\|_{L^2}=1$, for each $j=1,\dots,k$,
\item $\int_X u_iu_j=0$, for each $1\le i<j\le k$,
\item $\int_X |Du|^2dm\le \lambda_k(\O)$, for each $u=\alpha_1u_1+\dots+\alpha_ku_k$, where $\alpha_1^2+\dots+\alpha_k^2=1$. 
\end{enumerate}
\end{prop}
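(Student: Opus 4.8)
The plan is to prove the statement on the existence of optimal eigenfunctions by the direct method of the calculus of variations, treating the case $k=1$ first and then handling the general case by an inductive construction on a finite-dimensional subspace. The key technical ingredients available to us are the compact embedding $\HH2$, the lower semicontinuity of the gradient $\HH3$, and the Riesz/Stone structure of $H$ together with the properties of $D$.

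First I would establish the case $k=1$. I take a minimizing sequence $(u_n)\subset H_0(\O)$ with $\int_\O u_n^2\,d\m=1$ and $\int_\O|Du_n|^2\,d\m\to\lambda_1(\O)$. Such a sequence is bounded in $H$, so by the compact inclusion $\HH2$ a subsequence converges strongly in $L^2(X,m)$ to some $u_\O$. The $L^2$ constraint passes to the limit, giving $\|u_\O\|_{L^2}=1$ (in particular $u_\O\neq0$, which uses that the measure constraint is an $L^2$ normalization, not merely a bound). By $\HH3$ we get $u_\O\in H$ and $\int_\O|Du_\O|^2\,d\m\le\liminf_n\int_\O|Du_n|^2\,d\m=\lambda_1(\O)$; the reverse inequality is immediate from the definition since $u_\O$ is admissible, so equality holds. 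The only point requiring care is that $u_\O\in H_0(\O)$, i.e. that $u_\O=0$ $m$-a.e. on $X\setminus\O$: this follows because $u_n=0$ $m$-a.e. off $\O$ and the convergence is strong in $L^2$, so the limit inherits the same vanishing property.

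For general $k$, I would construct the functions $u_1,\dots,u_k$ by induction, at each stage minimizing the Rayleigh quotient over the subspace of $H_0(\O)$ orthogonal in $L^2$ to the previously chosen functions, and producing the minimizer by exactly the same direct-method argument as above (boundedness, compactness from $\HH2$, lower semicontinuity from $\HH3$). This yields (a) and (b) automatically. To verify (c), I would observe that for $u=\alpha_1u_1+\dots+\alpha_ku_k$ with $\sum\alpha_j^2=1$, the $L^2$-orthonormality from (a)--(b) gives $\int_\O u^2\,d\m=1$, so $u$ is admissible in the Rayleigh quotient; and the subspace $K=\mathrm{span}\{u_1,\dots,u_k\}$ is a $k$-dimensional competitor in the min-max definition \eqref{lbk}, whence $\sup\{\int_\O|Du|^2:u\in K,\ \int u^2=1\}\ge\lambda_k(\O)$. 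To obtain the desired inequality $\int_\O|Du|^2\,d\m\le\lambda_k(\O)$ I would argue that the inductive minimizers are eigenfunctions, so that on $K$ the gradient energy is controlled by the largest eigenvalue $\lambda_k$ appearing; equivalently, the standard Courant--Fischer identity realizes the min-max value on $K$.

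The main obstacle I anticipate is the absence of a genuine bilinear form: because $D$ is only subadditive and positively homogeneous (properties $D1$--$D4$) rather than linear, the usual spectral-theoretic machinery — self-adjointness, an orthonormal eigenbasis, the identity $\int|D(\sum\alpha_ju_j)|^2=\sum\alpha_j^2\int|Du_j|^2$ — is not directly available. Consequently establishing (c) for arbitrary combinations on $K$, rather than just at the chosen minimizers, is delicate: one cannot simply expand the quadratic form. I would expect to need either a convexity argument (using $D(\frac{u+v}{2})\le\frac12Du+\frac12Dv$ as in Proposition \ref{s2p1}) or a reduction showing that the min-max value in \eqref{lbk} is attained precisely on the span of the inductively constructed functions, so that the supremum over $K$ does not exceed $\lambda_k(\O)$. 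This is the step where the metric (nonlinear) nature of the gradient genuinely departs from the Euclidean case and will require the most care.
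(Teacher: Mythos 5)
Your case $k=1$ is exactly the paper's argument (direct method: a normalized minimizing sequence is bounded in $H$, the compact inclusion $\HH2$ gives a strong $L^2$ limit, strong convergence preserves the normalization and the vanishing $m$-a.e. off $\O$, and $\HH3$ gives membership in $H$ and the energy inequality), and it is correct. The gap is in your treatment of $k>1$, precisely at the step you yourself flag: ``the inductive minimizers are eigenfunctions, so \dots the standard Courant--Fischer identity realizes the min-max value on $K$.'' In this setting that step is not merely delicate, it is unavailable, and your proposal does not repair it. Since $D$ is only subadditive and positively homogeneous (properties $D1$--$D4$), the map $u\mapsto\int_X|Du|^2d\m$ is not a quadratic form: the constrained minimizers of your induction satisfy no linear Euler--Lagrange equation, $L^2$-orthogonality to the previously chosen functions yields no ``energy orthogonality,'' the successive constrained minima need not coincide with the min-max values \eqref{lbk}, and for a combination $u=\alpha_1u_1+\dots+\alpha_ku_k$ the only estimate subadditivity provides is $\int_X|Du|^2d\m\le\big(\sum_j|\alpha_j|\,\|Du_j\|_{L^2}\big)^2\le k\,\lambda_k(\O)$, which misses (c) by a factor of $k$. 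So the inductive route cannot close as proposed.

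The repair --- and what the paper's terse ``the proof in the case $k>1$ is analogous'' points to --- is to forgo any spectral structure and run the direct method at the level of subspaces, exploiting that (c) is only an inequality, quantified pointwise in $\alpha$. Take a minimizing sequence of $k$-dimensional subspaces $K_n\subset H_0(\O)$ in \eqref{lbk}, so that $\sup\{\int_\O|Dv|^2d\m:\ v\in K_n,\ \|v\|_{L^2}=1\}\le\lambda_k(\O)+\eps_n$ with $\eps_n\to0$, and choose an $L^2$-orthonormal basis $u_1^n,\dots,u_k^n$ of each $K_n$. Every $u_j^n$ is bounded in $H$, so by $\HH2$, after a single extraction, $u_j^n\to u_j$ strongly in $L^2(X,m)$ for all $j$ simultaneously; strong convergence preserves the $L^2$ inner products, which gives (a) and (b), and preserves vanishing $m$-a.e.\ off $\O$, while $\HH3$ gives $u_j\in H$, hence $u_j\in H_0(\O)$. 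Now for each fixed $\alpha$ with $\alpha_1^2+\dots+\alpha_k^2=1$, the element $u^n=\alpha_1u_1^n+\dots+\alpha_ku_k^n\in K_n$ is $L^2$-normalized by orthonormality, so $\int_X|Du^n|^2d\m\le\lambda_k(\O)+\eps_n$; it converges strongly in $L^2$ to $u=\alpha_1u_1+\dots+\alpha_ku_k$ and is bounded in $H$, so $\HH3$ yields $\int_X|Du|^2d\m\le\liminf_n\int_X|Du^n|^2d\m\le\lambda_k(\O)$, which is exactly (c). Note that this argument makes no claim that the $u_j$ are eigenfunctions or that their span attains the infimum in \eqref{lbk}; none is needed, because the statement asks only for the upper bound.
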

\begin{proof}
Suppose that $(u_n)_{n\ge1}\subset H_0(\O)$ is a minimizing sequence for $\lambda_1(\O)$ such that $\|u_n\|_{L^2}=1$. Then $(u_n)_{n\ge1}$ is bounded with respect to the norm of $H$ and so, there is a subsequence, still denoted in the same way, which strongly converges in $L^2(X,m)$ to some function $u\in H$:
$$u_n\xrightarrow[n\to\infty]{L^2(X,m)}u\in H.$$
We have that $\|u\|_{L^2}=1$ and 
$$\int_\O|Du|^2d\m\le\liminf_{n\to\infty}\int_\O|Du_n|^2d\m=\lambda_1(\O).$$
Thus, $u$ is the desired function. The proof in the case $k>1$ is analogous.
\end{proof}

\begin{prop}\label{s4p2}
For any $k>0$, consider the functional $\lambda_k:\E\to\R$ defined by \eqref{lbk}. It is decreasing with respect to the set inclusion and lower semicontinuous with respect to the weak-$\gamma$-convergence.
\end{prop}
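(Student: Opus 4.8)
The plan is to verify the two asserted properties separately, monotonicity being essentially formal while the semicontinuity carries the real content. For monotonicity, if $\omega\subset\O$ are energy sets then $X\setminus\O\subset X\setminus\omega$, so a function vanishing $m$-a.e. on $X\setminus\omega$ also vanishes $m$-a.e. on $X\setminus\O$; hence $H_0(\omega)\subset H_0(\O)$. In the min-max formula \eqref{lbk} the infimum defining $\lambda_k(\O)$ then ranges over a larger family of $k$-dimensional subspaces than that defining $\lambda_k(\omega)$, whence $\lambda_k(\O)\le\lambda_k(\omega)$.

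For lower semicontinuity I would argue directly on the weak-$\gamma$ convergence, using Proposition \ref{s3p1} to locate the limiting eigenfunctions. Let $\O_n\xrightarrow[n\to\infty]{weak-\gamma}\O$ and put $\ell=\liminf_n\lambda_k(\O_n)$; passing to a subsequence I may assume $\lambda_k(\O_n)\to\ell$ and that $\ell<\infty$, since otherwise there is nothing to prove. By Proposition \ref{s4p1} I choose, for each $n$, functions $u_1^n,\dots,u_k^n\in H_0(\O_n)$ that are $L^2$-orthonormal and satisfy $\int_X|D(\sum_j\alpha_ju_j^n)|^2\,d\m\le\lambda_k(\O_n)$ whenever $\sum_j\alpha_j^2=1$. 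Taking one coefficient equal to $1$ and the rest $0$ shows each $u_j^n$ is bounded in $H$, so by the compact embedding $(\HH2)$ I extract, successively in $j$, an $L^2$-convergent subsequence $u_j^n\to u_j$. Strong $L^2$ convergence preserves the scalar products, so the limits $u_j$ stay orthonormal and span a genuinely $k$-dimensional space, and Proposition \ref{s3p1} (applicable since the $u_j^n$ are bounded in $H$, converge in $L^2$, and lie in $H_0(\O_n)$) gives $u_j\in H_0(\O)$.

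It then remains to bound the Rayleigh quotient on $K=\mathrm{span}(u_1,\dots,u_k)\subset H_0(\O)$. Fixing coefficients with $\sum_j\alpha_j^2=1$ and setting $u^n=\sum_j\alpha_ju_j^n$, the $u^n$ converge in $L^2$ to $u=\sum_j\alpha_ju_j$ and are bounded in $H$ thanks to the competitor inequality $\int_X|Du^n|^2\,d\m\le\lambda_k(\O_n)$; the lower semicontinuity of the gradient norm $(\HH3)$ then yields $\int_X|Du|^2\,d\m\le\liminf_n\int_X|Du^n|^2\,d\m\le\ell$. Taking the supremum over admissible coefficients gives $\lambda_k(\O)\le\ell$, as required. (Alternatively, having proved monotonicity one could establish semicontinuity only for $\gamma$-convergence and then invoke Lemma \ref{s4l1}, but the route above delivers the weak-$\gamma$ statement in a single step.)

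The step I expect to be most delicate is precisely the one where the Euclidean intuition must be corrected: since $D$ is only subadditive and not linear, I cannot estimate $D(\sum_j\alpha_ju_j^n)$ by recombining the individual gradients $Du_j^n$. The crucial bound on the gradient of a linear combination must instead be read off directly from the competitor inequality of Proposition \ref{s4p1}(c); with that observation in hand, the compactness $(\HH2)$, the support identification of Proposition \ref{s3p1}, and the semicontinuity $(\HH3)$ assemble exactly as in the scalar case $k=1$.
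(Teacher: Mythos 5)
Your proposal is correct and follows essentially the same route as the paper's own proof: monotonicity via $H_0(\omega)\subset H_0(\O)$, then extraction of $L^2$-convergent orthonormal quasi-eigenfunctions by the compact embedding $(\HH2)$, membership of the limits in $H_0(\O)$ via Proposition \ref{s3p1}, and the bound on the Rayleigh quotient over the limit span obtained by applying $(\HH3)$ to each fixed linear combination $u^n=\sum_j\alpha_ju_j^n$, exactly as in the paper. Your closing remark correctly identifies the one genuinely delicate point — that the nonlinearity of $D$ forces one to use the competitor inequality of Proposition \ref{s4p1}(c) rather than recombining individual gradients — which is precisely how the paper's argument works.
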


\begin{proof}
It is clear that $\lambda_k$ is decreasing with respect to the inclusion since $\omega\subset\O$ implies $H_0(\omega)\subset H_0(\O)$. We now prove the semicontinuity. Let $\O_n\xrightarrow[n\to\infty]{w\gamma}\O$, that is $w_{\O_n}\xrightarrow[n\to\infty]{L^2(X)}w$ and $\O=\{w>0\}$. We can suppose that the sequence $\lambda_k(\O_n)$ is bounded by some positive constant $C_k$. Let for each $n>0$ the functions $u^n_1,\dots,u^n_k\in H_0(\O_n)$ satisfy the conditions $(a), (b)$ and $(c)$ from Proposition \ref{s4p1}. Then, we have that up to a subsequence we can suppose that $u^n_j$ converges in $L^2(X,m)$ to some function $u_j\in H^1(X,m)$. Moreover, by Proposition \ref{s3p1}, we have that $u_j\in H_0(\O)$, $\forall j=1,\dots,k$. Consider the linear subspace $K\subset H_0(\O)$ generated by $u_1,\dots,u_k$. Since $u_1,\dots,u_k$ are mutually orthogonal in $L^2(X,m)$, we have that $\dim K=k$ and so
$$\lambda_k(\O)\le\sup\Big\{\int_\O|Du|^2d\m:\ u\in K,\ \int_\O u^2d\m=1\Big\}.$$
It remains to prove that for each $u\in K$ such that $\|u\|_{L^2}=1$, we have
$$\int_X |Du|^2dm\le \liminf_{n\to\infty}\lambda_k(\O_n).$$
In fact, we can suppose that $u=\alpha_1u_1+\dots+\alpha_ku_k$, where $\alpha_1^2+\dots+\alpha_k^2=1$ and so, $u$ is the strong limit in $L^2(X,m)$ of the sequence $u^n=\alpha_1u^n_1+\dots+\alpha_ku^n_k\in H_0(\O_n)$. Thus, we obtain
$$\int_X |Du|^2dm\le \liminf_{n\to\infty}\int_X |Du^n|^2dm\le \liminf_{n\to\infty}\lambda_k(\O_n)$$
as required.
\end{proof}

\begin{prop}\label{s4p3}
The Dirichlet energy functional $E:\E\to\R$ introduced in Definition \ref{en}, satisfies conditions (J1) and (J2).
\end{prop}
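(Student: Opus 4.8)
The plan is to use that, by Definition \ref{s3d1}, $\gamma$-convergence $\O_n \xrightarrow{\gamma} \O$ is exactly the strong $L^2(X,m)$-convergence $w_{\O_n} \to w_\O$ of the energy minimizers, and that the value $E(\O)$ coincides with $F_\O(w_\O)$, where $F_\O = F_\O^{1,1}$ and $w_\O$ is its unique minimizer (Proposition \ref{s2p1} and \eqref{s2e3}). As a preliminary step I would record this identification: for $u \in H_0(\O)$ one has $u = 0$ $m$-a.e.\ on $X \setminus \O$, and moreover $Du = 0$ $m$-a.e.\ on $\{u = 0\}$. The latter follows by writing $|u| = u^+ \vee u^-$, applying D4 together with D3, and comparing with the identity $D|u| = Du$ from Remark \ref{s1r1}. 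Hence the integrals over $X$ in $F_\O$ reduce to integrals over $\O$ on $H_0(\O)$, so that $\min F_\O = E(\O)$ in the sense of Definition \ref{en}.

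Property (J2) is then immediate: if $\O_1 \subset \O_2$ then $H_0(\O_1) \subset H_0(\O_2)$ by Definition \ref{s2d1}, and since $E$ is the infimum of one and the same functional over these nested admissible classes, enlarging the class can only decrease the infimum. This gives $E(\O_2) \le E(\O_1)$, which is the required monotonicity.

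For property (J1) I would take $\O_n \xrightarrow{\gamma} \O$, so $w_{\O_n} \to w_\O$ strongly in $L^2(X,m)$, and split $E(\O_n) = F_{\O_n}(w_{\O_n})$ into its three terms. The zeroth-order terms pass to the limit directly: $\int_X w_{\O_n}^2\,dm \to \int_X w_\O^2\,dm$ by strong $L^2$-convergence, while $\int_X w_{\O_n}\,dm \to \int_X w_\O\,dm$ because $m(X) < \infty$ makes $L^2$-convergence imply $L^1$-convergence. For the gradient term I would use the a priori bound \eqref{s2p1e1} to see that $(w_{\O_n})$ is bounded in $H$, and then apply hypothesis $\HH3$ to obtain $\int_X |Dw_\O|^2\,dm \le \liminf_n \int_X |Dw_{\O_n}|^2\,dm$. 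Adding the two convergent lower-order terms to this liminf yields $E(\O) = F_\O(w_\O) \le \liminf_n F_{\O_n}(w_{\O_n}) = \liminf_n E(\O_n)$.

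The only genuinely delicate point is the semicontinuity of the gradient term, which is precisely where $\HH3$ is invoked together with the uniform $H$-bound on the minimizers; the remaining terms are handled by the $L^2$-continuity of the lower-order parts of the functional. I therefore expect the gradient estimate, and the preliminary reduction $Du = 0$ on $\{u=0\}$ needed so that the $X$- and $\O$-integrals agree, to be the two steps that require the most care.
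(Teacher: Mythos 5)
Your proof is correct and takes essentially the same route as the paper's own (one-line) argument: (J2) is immediate from the nesting $H_0(\O_1)\subset H_0(\O_2)$, and (J1) is exactly hypothesis $\HH3$ applied to the sequence $w_{\O_n}\to w_\O$, which is bounded in $H$ by \eqref{s2p1e1}, with the lower-order terms passing to the limit by strong $L^2$-convergence on the finite measure space. The preliminary locality step $Du=0$ $m$-a.e.\ on $\{u=0\}$, which you derive correctly from (D3), (D4) and $D|u|=Du$, is a detail the paper leaves implicit when identifying the integrals over $\O$ with those over $X$, so it adds rigor without changing the approach.
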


\begin{proof}
The condition $(J1)$ is obvious in view of Definition \ref{en}. The lower semicontinuity, follows from the lower semicontinuity of the $L^2$-norm of the gradient (condition $\HH3$).
\end{proof}

In view of Proposition \ref{s4p2} and for $\lambda_k$ defined as in \eqref{lbk}, there is a large class of functionals $J$ which depend on the spectrum 
\be\label{s4e1}
\lambda(\O):=(\lambda_1(\O),\lambda_2(\O),\dots)\in\R^\N,
\ee
and which satisfy the conditions of Theorem \ref{teoJ}. In fact, consider a function 
$$\Phi:[0,+\infty]^\N\to[0,+\infty],$$
which satisfies the following conditions:

\begin{enumerate}[($\Phi$1)]
\item If $z\in[0,+\infty]^{\N}$ and $(z_n)_{n\ge1}\subset[0,+\infty]^{\N}$ is a sequence such that for each $j\in\N$
$$z_{n}^{(j)}\xrightarrow[n\to\infty]{}z^{(j)},$$
where $z_n^{(j)}$ indicates the $j^{th}$ component of $z_n$, then
$$\Phi(z)\le\liminf_{n\to\infty}\Phi(z_n).$$
\item If $z_1^{(j)}\le z_2^{(j)}$, for each $j\in\N$, then $\Phi(z_1)\le\Phi(z_2)$.
\end{enumerate}
Then, the functional $J:\E\to\R$, defined as
$$J(\O)=\Phi(\lambda(\O)),$$
satisfies the conditions $(J1)$ and $(J2)$, where for any Borel set $\O\in\B(X)$, $\lambda(\O)$ is as in \eqref{s4e1}. In particular, the shape optimization problem 
\be\label{sopphi}
\min\{\Phi(\lambda(\O))\ :\ \O\in\B(X),\ m(\O)\le c\},
\ee
has a solution. Analogously, by Proposition \ref{s4p3}, the problem
\be\label{sopen}
\min\{E(\O)\ :\ \O\in\B(X),\ m(\O)\le c\},
\ee
has a solution which is an energy set. We restate the considerations above in the following result.

\begin{teo}\label{teophi}
Suppose that $(X,d)$ is a separable metric space with a finite Borel measure $m$. Suppose that $H\subset L^2(X,m)$ and $D:H\rightarrow L^2(X,m)$ satisfy the hypothesis $H1,H2,D1,D2,D3,D4,\HH1,\HH2$ and $\HH3$. Then the shape optimization problems \eqref{sopphi} and \eqref{sopen} have solutions which are energy sets.
\end{teo}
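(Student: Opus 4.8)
The plan is to reduce both problems to the abstract existence result, Theorem \ref{teoJ}, by verifying that the relevant functionals satisfy hypotheses (J1) and (J2) on the family $\E$ of energy sets, and then to promote the resulting energy-set minimizers to minimizers over all Borel sets by means of Remark \ref{s4r1}. The energy functional is handled immediately: by Proposition \ref{s4p3}, $E$ satisfies (J1) and (J2), so Theorem \ref{teoJ} produces an energy set $\O_c$ of measure at most $c$ minimizing $E$ over $\E$; since $E(\O)=F(H_0(\O))$ for a suitable $F$ defined on closed subspaces of $H$, Remark \ref{s4r1} shows that $\O_c$ also solves \eqref{sopen} over $\B(X)$.

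For \eqref{sopphi} I would set $J=\Phi\circ\lambda$ and check (J1)--(J2). Monotonicity (J2) is direct: if $\O_1\subset\O_2$, then $\lambda_k(\O_1)\ge\lambda_k(\O_2)$ for every $k$ by Proposition \ref{s4p2}, hence $\lambda(\O_1)\ge\lambda(\O_2)$ componentwise, and condition ($\Phi$2) gives $J(\O_1)\ge J(\O_2)$. For (J1) I would first observe that $\gamma$-convergence is stronger than weak-$\gamma$-convergence: if $w_{\O_n}\to w_\O$ strongly in $L^2$, the limit is $w_\O$ and $\{w_\O>0\}=\O$, so each $\lambda_k$, being l.s.c. for the weak-$\gamma$-convergence by Proposition \ref{s4p2}, is a fortiori l.s.c. for the $\gamma$-convergence.

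Now, given $\O_n\xrightarrow{\gamma}\O$, I would pass to a subsequence realizing $\liminf_n J(\O_n)$ and then, by a diagonal extraction, to a further subsequence along which $\lambda_k(\O_n)\to\ell_k\in[0,+\infty]$ for every $k$; the l.s.c. of each $\lambda_k$ yields $\lambda_k(\O)\le\ell_k$. Applying ($\Phi$1) to the componentwise convergent sequence $\lambda(\O_n)\to\ell$ gives $\Phi(\ell)\le\liminf_n\Phi(\lambda(\O_n))$, while ($\Phi$2) together with $\lambda(\O)\le\ell$ gives $\Phi(\lambda(\O))\le\Phi(\ell)$; combining the two yields $J(\O)\le\liminf_n J(\O_n)$, which is (J1). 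Thus Theorem \ref{teoJ} provides an energy set minimizing $J$ over $\E$, and since $J(\O)=\Phi(\lambda(\O))$ depends on $\O$ only through $H_0(\O)$, Remark \ref{s4r1} upgrades it to a solution of \eqref{sopphi} on $\B(X)$.

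The main obstacle is precisely the lower semicontinuity (J1) for $J=\Phi\circ\lambda$: the eigenvalues enjoy only a $\liminf$-type semicontinuity, whereas ($\Phi$1) is phrased for genuinely convergent coordinates, so the diagonal argument extracting the componentwise limits $\ell_k$ (allowing the value $+\infty$), combined with the monotonicity ($\Phi$2), is the crux of the matter. If the infimum in \eqref{sopphi} is $+\infty$ the statement is vacuous, so one may assume it finite and run the argument along a minimizing sequence on which $J$ is finite.
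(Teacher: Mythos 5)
Your proposal is correct and takes essentially the same route as the paper: the paper likewise deduces that $J=\Phi\circ\lambda$ satisfies (J1)--(J2) from Proposition \ref{s4p2} (and handles $E$ via Proposition \ref{s4p3}), applies Theorem \ref{teoJ}, and passes from energy sets back to Borel sets via Remark \ref{s4r1}. Your diagonal extraction of componentwise limits $\ell_k\in[0,+\infty]$, combined with ($\Phi$1) and the monotonicity ($\Phi$2), simply makes explicit the step the paper leaves implicit when reconciling the $\liminf$-type semicontinuity of each $\lambda_k$ with the convergence hypothesis in ($\Phi$1).
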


\begin{cor}\label{corlb}
Consider a separable metric space $(X,d)$ and a finite Borel measure $m$ on $X$. Let $H^1(X,m)$ denote the Sobolev space on $(X,d,m)$ and let $Du=g_u$ be the minimal generalized upper gradient of $u\in H^1(X,m)$. Under the assumption that the inclusion $H^1(X,m)\hookrightarrow L^2(X,m)$ is compact, we have that the problems \eqref{sopphi} and \eqref{sopen} have solutions, which are energy sets. 
\end{cor}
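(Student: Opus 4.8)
The plan is to recognize Corollary \ref{corlb} as the concrete instance of the abstract existence result Theorem \ref{teophi} obtained by choosing $H=H^1(X,m)$ and letting $D$ be the minimal generalized upper gradient, $Du=g_u$. Thus the entire task reduces to checking that this particular pair $(H,D)$ satisfies the nine structural hypotheses $H1$, $H2$, $D1$--$D4$, $\HH1$, $\HH2$, $\HH3$; once this is done, Theorem \ref{teophi} applies verbatim and produces solutions of \eqref{sopphi} and \eqref{sopen} that are energy sets.

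The algebraic and lattice hypotheses come essentially for free. Conditions $H1$, $H2$ (the Riesz and Stone properties of $H^1(X,m)$) and $D1$--$D4$ (nonnegativity, subadditivity, positive homogeneity, and the splitting formula for $g_{u\vee v}$) are precisely the properties of the minimal generalized upper gradient established in \cite{cheeger}, as recorded in the discussion following Remark \ref{s1r1}. For $\HH1$ I would use that $(H^1(X,m),\|\cdot\|_{1,2})$ is a Banach space by \cite[Theorem 2.7]{cheeger}, together with the equivalence of $\|u\|_H=(\|u\|_{L^2}^2+\|g_u\|_{L^2}^2)^{1/2}$ and $\|u\|_{1,2}=\|u\|_{L^2}+\|g_u\|_{L^2}$, which follows from the elementary bounds $\tfrac{1}{\sqrt2}(a+b)\le\sqrt{a^2+b^2}\le a+b$; equivalent norms share completeness, so $(H,\|\cdot\|_H)$ is complete. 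Finally $\HH2$, the compactness of $i:H\hookrightarrow L^2$, is nothing but the standing assumption of the corollary.

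The only genuine point is $\HH3$, the $L^2$-lower semicontinuity of $u\mapsto\int_X g_u^2\,dm$ along $H$-bounded sequences, and this is where I expect the real work to lie. Let $u_n\to u$ in $L^2$ with $\sup_n\|g_{u_n}\|_{L^2}<\infty$, set $L=\liminf_n\|g_{u_n}\|_{L^2}$, and pass to a subsequence realizing this $\liminf$. The idea is to unwind the definition of generalized upper gradient: for each $n$ there are $L^2$-approximations $u_{n,j}\to u_n$ carrying genuine upper gradients $g_{n,j}\to g_{u_n}$, so a diagonal selection produces functions $v_n\to u$ in $L^2$ with honest upper gradients $h_n$ satisfying $\|h_n\|_{L^2}\to L$. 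Since $|u|_{1,2}$ is by definition the infimum of $\liminf_j\|g_j\|_{L^2}$ over all such approximating pairs, the sequence $(v_n,h_n)$ is admissible and gives $|u|_{1,2}\le L<\infty$; hence $u\in H$, and because $\|g_u\|_{L^2}=|u|_{1,2}$ we obtain $\|g_u\|_{L^2}\le\liminf_n\|g_{u_n}\|_{L^2}$, equivalently $\int_X g_u^2\,dm\le\liminf_n\int_X g_{u_n}^2\,dm$ (for nonnegative scalars $\liminf a_n^2=(\liminf a_n)^2$). This establishes $\HH3$.

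The main obstacle is exactly this diagonal-approximation step: one must be careful that the double approximation (first by the defining sequences of each $g_{u_n}$, then by the subsequence realizing $L$) can be collapsed into a single admissible competitor for the infimum defining $|u|_{1,2}$, and that boundedness in $H$ is what keeps $L$ finite so that the limit genuinely lies in $H$. With all nine hypotheses verified, a direct application of Theorem \ref{teophi} completes the proof.
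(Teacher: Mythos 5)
Your proposal is correct and follows essentially the same route as the paper, which states Corollary \ref{corlb} without a separate proof precisely because it is the instance of Theorem \ref{teophi} with $H=H^1(X,m)$ and $Du=g_u$: conditions $H1$, $H2$, $D1$--$D4$ are attributed to \cite{cheeger} in Section \ref{s1}, $\HH1$ follows from \cite[Theorem 2.7]{cheeger}, and $\HH2$ is the standing compactness assumption. Your explicit diagonal verification of $\HH3$ (collapsing the double approximation into a single admissible competitor for the infimum defining $|u|_{1,2}$ and then using $\|g_u\|_{L^2}=|u|_{1,2}$) is the standard argument and correctly fills in the one step the paper leaves implicit.
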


\begin{oss}\label{s4r2}
There are various assumptions that can be made on the measure metric space $(X,d,m)$ in order to have that the inclusion $H^1(X,m)\hookrightarrow L^2(X,m)$ is compact. A detailed discussion on this topic can be found in \cite[Section 8]{smetp}. For the sake of completeness, we state here a result from \cite{smetp}:\\

Consider a separable metric space $(X,d)$ of finite diameter equipped with a finite Borel measure $m$ such that:
\begin{enumerate}[(a)]
\item there exist constants $C_m>0$ and $s>0$ such that for each ball $B(x_0,r_0)\subset X$, each $x\in B(x_0,r_0)$ and $0<r\le r_0$, we have that
$$\frac{m(B(x,r))}{m(B(x_0,r_0))}\ge C_m\frac{r^s}{r_0^s};$$
\item $(X,d,m)$ supports a weak Poincar\'e inequality, i.e. there exist $C_P>0$ and $\sigma\ge1$ such that for each $u\in H^1(X,m)$ and each ball $B=B(x,r)\subset X$ we have
$$\frac{1}{|B|}\int_{B}\left|u(y)-\frac{1}{|B|}\int_{B}udm\right|dm(y)\le C_P r\left(\frac{1}{|B(x,\sigma r)|}\int_{B(x,\sigma r)}g_u^2dm\right)^{1/2}.$$ 
\end{enumerate}
Then, the inclusion $H^1(X,m)\hookrightarrow L^2(X,m)$ is compact.
\end{oss}

%%%%%%%%%%%%%%%%%%%%%%%%%%%%%%%%%%%%%%%%%%%%%%%
\section{Quasi-open sets and energy sets}\label{s5}

In this section we introduce the notions of capacity, quasi-open sets and quasi-continuous functions in the general setting given by a linear subspace $H\subset L^2(X,m)$ and operator $D:H\mapsto L^2(X,m)$, satisfying the assumptions $H1$, $H2$, $D1$, $D2$, $D3$, $D4$, $\HH1$, $\HH2$, $\HH3$ from Section \ref{s1} and $\HH4$, defined below. All these notions are deeply studied in the Euclidean case and some of the results we present have Euclidean analogues. Thus, where possible, we will limit ourselves to state the results and give precise references (mainly, we will refer to \cite{pierrehenrot}) for their proofs in $\R^d$.

The optimization problems that appear in this setting, are the exact analogues of the ones studied in the Euclidean space. We will show that for a large class of shape functionals the results from Section \ref{s4} apply also in this context (see Theorem \ref{teoJqo}).

In order to have a capacity theory, analogous to the one in the Euclidean space, we make a further assumption on the Banach space $(H,\|\cdot\|_H)$:
\begin{enumerate}[($\HH4$)]
\item the linear subspace $H\cap C(X)$, where $C(X)$ denotes the set of real continuous functions on $X$, is dense in $H$ with respect to the norm $\|\cdot\|_{H}$.
\end{enumerate}
 
\begin{deff}\label{s5d1}
We define the capacity (that depends on $H$ and $D$) of an arbitrary set $\O\subset X$ as
\be\label{cap}
\cp(\O)=\inf\big\{\|u\|^2_H\ :\ u\in H,\ u\ge0\hbox{ on }X,\ u\ge1\hbox{ in a neighbourhood of }\O\big\}.
\ee
We will say that a property $P$ holds quasi-everywhere (shortly q.e.), if the set on which it does not hold has zero capacity.
\end{deff}

\begin{oss}\label{s5r1}
If $u\in H$ is such that $u\ge0$ on $X$ and $u\ge1$ on $\O\subset X$, then $\|u\|_H^2\le m(\O)$. Thus, we have that $\cp(\O)\ge m(\O)$ and, in particular, if the property $P$ holds q.e, then it also holds $m$-a.e.
\end{oss}

\begin{deff}\label{s5d2}
A function $u:X\to\R$ is said to be quasi-continuous if there exists a decreasing sequence of open sets $(\omega_n)_{n\ge1}$ such that:
\begin{itemize}
\item $\cp(\omega_n)\xrightarrow[n\to\infty]{}0$,
\item On the complementary $\omega_n^c$ of $\omega_n$ the function $u$ is continuous.
\end{itemize}
\end{deff}

\begin{deff}\label{s5d3}
We say that a set $\O\subset X$ is quasi-open if there exists a sequence of open sets $(\omega_n)_{n\ge1}$ such that
\begin{itemize}
\item $\O\cup\omega_n$ is open for each $n\ge1$,
\item $\cp(\omega_n)\xrightarrow[n\to\infty]{}0$.
\end{itemize}
\end{deff}

The following two Propositions contain the fundamental properties of the quasi-continuous functions and the quasi-open sets.

\begin{prop}\label{s5p1}
Suppose that a function $u:X\to\R$ is quasi-continuous. Then we have that:
\begin{enumerate}[(a)]
\item the level set $\{u>0\}$ is quasi-open,
\item if $u\ge0$ $m$-a.e., then $u\ge0$ q.e. on $X$.
\end{enumerate}
\end{prop}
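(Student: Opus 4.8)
The plan is to base both parts on the defining sequence $(\omega_n)_{n\ge1}$ of open sets supplied by Definition \ref{s5d2}: we have $\cp(\omega_n)\to0$ and $u$ is continuous on each complement $\omega_n^c$.

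\textbf{Part (a).} First I would use that, since $u$ is continuous on the (closed) set $\omega_n^c$, the level set $\{u>0\}\cap\omega_n^c$ is relatively open in $\omega_n^c$; hence there is an open set $G_n\subset X$ with $\{u>0\}\cap\omega_n^c=G_n\cap\omega_n^c$. A short set computation then gives
$$\{u>0\}\cup\omega_n=\big(\{u>0\}\cap\omega_n^c\big)\cup\omega_n=(G_n\cap\omega_n^c)\cup\omega_n=G_n\cup\omega_n,$$
which is open. Since moreover $\cp(\omega_n)\to0$, the very sequence $(\omega_n)$ witnesses, through Definition \ref{s5d3}, that $\{u>0\}$ is quasi-open.

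\textbf{Part (b).} Set $Z=\{u<0\}$; by hypothesis $m(Z)=0$, and the goal is $\cp(Z)=0$. For each $n$ I would split $Z=(Z\cap\omega_n^c)\cup(Z\cap\omega_n)$. The piece $Z\cap\omega_n\subset\omega_n$ is harmless: monotonicity of the capacity (immediate from its infimum definition) yields $\cp(Z\cap\omega_n)\le\cp(\omega_n)\to0$. As in part (a), the piece $A_n:=Z\cap\omega_n^c$ is relatively open in $\omega_n^c$, say $A_n=G_n\cap\omega_n^c$ with $G_n$ open, so that $\O_n:=\omega_n\cup A_n=\omega_n\cup G_n$ is open, contains $Z$, and satisfies $\O_n\setminus\omega_n=A_n\subset Z$; hence $m(\O_n)=m(\omega_n)\le\cp(\omega_n)\to0$ by Remark \ref{s5r1}. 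I would also record the key localization: if $x\in A_n$, continuity of $u$ on $\omega_n^c$ gives an open $U\ni x$ with $U\cap\omega_n^c\subset Z$, so $m(U\cap\omega_n^c)=0$; since every nonempty open set has positive measure, $\mathrm{int}(\omega_n^c)$ cannot meet $U$, whence $U\subset\overline{\omega_n}$ and therefore $A_n\subset\mathrm{int}(\overline{\omega_n})$.

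\textbf{Main obstacle.} The remaining step, which is the crux, is to upgrade the measure bound $m(\O_n)\to0$ to the capacity bound $\cp(\O_n)\to0$ (after which $\cp(Z)\le\inf_n\cp(\O_n)=0$ concludes the proof). This cannot be done from the measure alone, since a small-measure open set need not have small capacity. Instead I would combine the admissible potentials that already certify $\cp(\omega_n)\to0$ with the continuity of $u$ on $\omega_n^c$ and the containment $A_n\subset\overline{\omega_n}$ to manufacture admissible functions for $\O_n$ of vanishing $H$-norm, using the standard countable subadditivity of $\cp$ where needed; this construction is the exact analogue of the Euclidean one, for which I would follow the proof in \cite{pierrehenrot}. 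All other ingredients are elementary set algebra together with the two facts $\cp\ge m$ and the monotonicity of $\cp$.
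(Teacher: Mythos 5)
First, a point of reference: the paper does not prove this proposition at all; it cites \cite[Proposition 3.3.41]{pierrehenrot} for part (a) and \cite[Proposition 3.3.30]{pierrehenrot} for part (b). Against that baseline, your part (a) is complete, correct, and is exactly the standard argument the cited reference gives: continuity of $u$ on the closed set $\omega_n^c$ makes $\{u>0\}\cap\omega_n^c$ relatively open, and the identity $\{u>0\}\cup\omega_n=G_n\cup\omega_n$ shows the defining sequence $(\omega_n)$ of the quasi-continuity of $u$ itself witnesses quasi-openness. This is a genuine improvement on the paper's text, which outsources the proof entirely.

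Part (b), however, has a genuine gap, which you yourself flag: everything you actually establish ($m(\O_n)\to0$, $A_n\subset\mathrm{int}(\overline{\omega_n})$, $Z\subset\O_n$) is insufficient, and the step you defer to \cite{pierrehenrot} is not a technical finish but the entire content of the statement. Two concrete points. First, the localization $A_n\subset\mathrm{int}(\overline{\omega_n})$ is a dead end: the capacity of $\overline{\omega_n}$ (or of $\mathrm{int}(\overline{\omega_n})$) is not controlled by $\cp(\omega_n)$, since with the neighbourhood-based Definition \ref{s5d1} a dense open set can have arbitrarily small capacity while its closure is all of $X$ (in $\R^d$, $d\ge2$, cover a countable dense set by balls of summably small capacity); so no admissible potential for $\O_n$ can be extracted from that containment alone. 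Second, the lemma actually needed is: \emph{for an open set $G$, any $v\in H$ with $v\ge1$ $m$-a.e. on $G$ satisfies $\cp(G)\le\|v\|_H^2$} (equivalently, part (b) for quasi-continuous representatives of functions in $H$). Granting this, your proof closes immediately: the potentials $v_n$ certifying $\cp(\omega_n)\to0$ satisfy $v_n\ge1$ on $\omega_n$ and $m(\O_n\setminus\omega_n)=0$, hence $v_n\ge1$ a.e. on the open set $\O_n$, giving $\cp(Z)\le\cp(\O_n)\le\|v_n\|_H^2\to0$. But this lemma is where the real potential theory lives: its proof in the abstract setting requires hypothesis $(\HH4)$ (approximate $v$ in $H$ by continuous functions, for which a.e.\ inequality on an open set is an everywhere inequality because every nonempty open set has positive $m$-measure, then pass to the limit q.e.\ via Proposition \ref{s5t1}(b)). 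Your sketch never identifies this, and it also invokes countable subadditivity of $\cp$, which the paper never establishes and which itself needs an argument from (D4) plus completeness. So as written, part (b) is not a proof but a correct reduction to a statement essentially equivalent to the one being proved.
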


\begin{proof}
See \cite[Proposition 3.3.41]{pierrehenrot} for a proof of $(a)$ and \cite[Proposition 3.3.30]{pierrehenrot} for a proof of $(b)$.
\end{proof}

\begin{prop}\label{s5t1}
\begin{enumerate}[(a)]
\item For each function $u\in H$, there is a quasi-continuous function $\tilde{u}$ such that $u=\tilde{u}$ $m$-a.e.. We say that $\tilde{u}$ is a quasi-continuous representative of $u\in H$. If $\tilde{u}$ and $\tilde{u}'$ are two quasi-continuous representatives of $u\in H$, then $\tilde{u}=\tilde{u}'$ q.e. 
\item If $u_n\xrightarrow[n\to\infty]{H}u$, then there is a subsequence $(u_{n_k})_{k\ge1}\subset H$ such that, for the quasi-continuous representatives of $u_{n_k}$ and $u$, we have
$$\tilde{u}_{n_k}(x)\xrightarrow[n\to\infty]{}\tilde{u}(x),$$
for q.e. $x\in X$.
\end{enumerate}
\end{prop}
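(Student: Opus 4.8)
The plan is to construct a quasi-continuous representative by quasi-uniform approximation with continuous functions, exactly as in the Euclidean potential theory, the only genuinely new point being that $D$ is not linear, so the capacity estimates must be organized through the lattice structure $(D4)$ rather than through addition. Two preliminary properties of $\cp$ are needed. Monotonicity ($A\subset B\Rightarrow\cp(A)\le\cp(B)$) is immediate, since any competitor for the larger set competes for the smaller one. Countable subadditivity is the delicate step: given sets $A_k$ and near-optimal competitors $u_k\ge0$ with $u_k\ge1$ on an open neighbourhood $V_k\supset A_k$ and $\|u_k\|_H^2\le\cp(A_k)+\eps 2^{-k}$, I cannot add the $u_k$, so instead I pass to $w_n=u_1\vee\dots\vee u_n\in H$. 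From $(D4)$ one gets $\int_X|D(u\vee v)|^2d\m\le\int_X|Du|^2d\m+\int_X|Dv|^2d\m$, and since $(u\vee v)^2\le u^2+v^2$ for $u,v\ge0$, it follows that $\|u\vee v\|_H^2\le\|u\|_H^2+\|v\|_H^2$, hence by induction $\|w_n\|_H^2\le\sum_{k\le n}\|u_k\|_H^2$. The sequence $w_n$ increases and is bounded in $H$, so it converges in $L^2$ to $w=\sup_k u_k$; by the lower semicontinuity $\HH3$ we obtain $w\in H$ with $\|w\|_H^2\le\sum_k\cp(A_k)+\eps$. As $w\ge1$ on the open set $\bigcup_kV_k\supset\bigcup_kA_k$, this gives $\cp\big(\bigcup_kA_k\big)\le\sum_k\cp(A_k)$.

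Next I establish a Chebyshev inequality and then part (a). For continuous $v\in H$ the set $\{|v|>t\}$ is open and $|v|/t$ is an admissible competitor (recall $D|v|=Dv$ by Remark \ref{s1r1}), so $\cp(\{|v|>t\})\le t^{-2}\|v\|_H^2$. Using the density assumption $\HH4$ I pick $v_k\in H\cap C(X)$ with $v_k\to u$ in $H$, and after passing to a subsequence arrange $\|v_{k+1}-v_k\|_H\le 4^{-k}$. Then $\omega_k=\{|v_{k+1}-v_k|>2^{-k}\}$ is open, Chebyshev gives $\cp(\omega_k)\le4^{-k}$, and $G_n=\bigcup_{k\ge n}\omega_k$ is open with $\cp(G_n)\le\sum_{k\ge n}4^{-k}\to0$ by subadditivity. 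On each $X\setminus G_n$ the telescoping series $\sum(v_{k+1}-v_k)$ converges uniformly, so $v_k$ converges uniformly there; I define $\tilde u$ as this pointwise limit, which exists off $\bigcap_nG_n$, a set of zero capacity. The function $\tilde u$ is continuous on every $X\setminus G_n$, hence quasi-continuous by Definition \ref{s5d2}, and since $v_k\to u$ also in $L^2$, a further subsequence converges $m$-a.e., forcing $\tilde u=u$ $m$-a.e. For the uniqueness assertion, the difference of two quasi-continuous representatives is quasi-continuous (continuity holds off the union of the two exceptional open sets, whose capacities still tend to $0$) and vanishes $m$-a.e.; applying Proposition \ref{s5p1}(b) to both $+(\tilde u-\tilde u')$ and $-(\tilde u-\tilde u')$ yields $\tilde u=\tilde u'$ q.e.

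Finally, for part (b) I first upgrade the Chebyshev inequality to arbitrary $v\in H$. Approximating $v$ as in (a) produces continuous $v_j$ converging to $\tilde v$ quasi-uniformly, i.e. uniformly off an open set $G$ with $\cp(G)<\eta$; then $\{|\tilde v|>t\}\subset\{|v_j|>t-\delta\}\cup G$ for $j$ large, and monotonicity together with subadditivity and the continuous Chebyshev bound give $\cp(\{|\tilde v|>t\})\le t^{-2}\|v\|_H^2$ after letting $j\to\infty$ and $\delta,\eta\to0$. Now, given $u_n\to u$ in $H$, I extract a subsequence with $\|u_{n_{k+1}}-u_{n_k}\|_H\le 8^{-k}$ and apply this inequality to the differences, whose quasi-continuous representatives are $\tilde u_{n_{k+1}}-\tilde u_{n_k}$, obtaining $\cp\big(\{|\tilde u_{n_{k+1}}-\tilde u_{n_k}|>2^{-k}\}\big)\le C\,16^{-k}$. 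The $\limsup$ of these sets has zero capacity by subadditivity and monotonicity, off it the $\tilde u_{n_k}$ are Cauchy and hence converge q.e., and the $L^2$-convergence identifies the limit with $\tilde u$ q.e. I expect the main obstacle to be the countable subadditivity of $\cp$: this is where the nonlinearity of $D$ obstructs the usual argument, and it must be routed through the lattice identity $(D4)$, the bound $\|u\vee v\|_H^2\le\|u\|_H^2+\|v\|_H^2$, and the lower semicontinuity $\HH3$ to pass to the supremum; once this is in hand, the Borel--Cantelli/quasi-uniform convergence scheme behind both parts is routine.
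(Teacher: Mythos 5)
Your proposal is essentially correct, and it takes a genuinely different route from the paper: the paper does not prove Proposition \ref{s5t1} at all, but simply cites \cite[Theorem 3.3.29, Proposition 3.3.33]{pierrehenrot}, where the Euclidean case is treated. Your self-contained argument is therefore more informative in this abstract setting, and you correctly isolated the one point where the Euclidean proof does not transfer verbatim: countable subadditivity of $\cp$ cannot be obtained by summing near-optimal competitors, since $D$ is not linear. Routing it through the lattice identity (D4) (which gives the pointwise bound $|D(u\vee v)|^2\le |Du|^2+|Dv|^2$), the elementary inequality $(u\vee v)^2\le u^2+v^2$ for nonnegative functions, and then passing to the supremum via monotone $L^2$ convergence and the semicontinuity ($\HH3$), is exactly the adaptation needed to make the paper's citation legitimate under the axioms (H1)--(H2), (D1)--(D4), ($\HH1$)--($\HH4$). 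The Chebyshev bound for continuous competitors, the telescoping quasi-uniform scheme in (a), and the uniqueness via Proposition \ref{s5p1}(b) applied to $\pm(\tilde u-\tilde u')$ are all sound; note also that your construction only needs density of $H\cap C(X)$, i.e. ($\HH4$), which is precisely the extra hypothesis the paper imposes in Section \ref{s5}.

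One step in (b) does not work as written: ``the $L^2$-convergence identifies the limit with $\tilde u$ q.e.''. $L^2$-convergence identifies the pointwise limit $g$ of $\tilde u_{n_k}$ with $u$ (hence with $\tilde u$) only $m$-a.e., and $m$-a.e. equality upgrades to q.e. equality only between \emph{quasi-continuous} functions, via Proposition \ref{s5p1}(b). But your exceptional sets $E_k=\{|\tilde u_{n_{k+1}}-\tilde u_{n_k}|>2^{-k}\}$ are level sets of quasi-continuous functions, not open sets, so the uniform convergence off $\bigcup_{k\ge n}E_k$ does not by itself make $g$ quasi-continuous, and without that the q.e. identification has no support. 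The repair is standard and uses only your own tools: by the definition \eqref{cap}, any set of capacity less than $\alpha$ is contained in an \emph{open} set of capacity less than $\alpha$ (an admissible competitor is $\ge 1$ on an open neighbourhood, which is itself an admissible open enlargement), so replace each $E_k$ by an open $V_k\supset E_k$ with $\cp(V_k)\le 2\cdot 16^{-k}$, and adjoin open sets $\theta_k$ with $\cp(\theta_k)\le 2^{-k}$ off which $\tilde u_{n_k}$ is continuous (these exist by quasi-continuity). Off the decreasing open sets $G_n=\bigcup_{k\ge n}(V_k\cup\theta_k)$, whose capacities tend to $0$ by subadditivity, all the $\tilde u_{n_k}$ with $k\ge n$ are continuous and the telescoping series converges uniformly, so $g$ is quasi-continuous; since $g=u$ $m$-a.e., the uniqueness statement you proved in (a) gives $g=\tilde u$ q.e., which closes the argument.
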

\begin{proof}
See \cite[Theorem 3.3.29]{pierrehenrot} for a proof of $(a)$, and \cite[Proposition 3.3.33]{pierrehenrot} for a proof of $(b)$. 
\end{proof}

\begin{oss}\label{oss0}
We note that, in view of Proposition \ref{s5t1}, each energy set $\Omega$ is a quasi-open set up to a set of measure zero. In fact, by the definition of energy set, we have that $\O=\{w_\O>0\}$ m-a.e. and choosing the quasi-continuous representative of $w_\O$ we have the thesis. In the cases $H=H^1(X,m)$ and $H=H^1_0(X,m)$, we have also the converse implication. In fact, suppose that $\Omega$ is a quasi-open set and that  $\omega_n$ is a sequence of open sets, as in Definition \ref{s5d3}. For each $n\ge0$, consider the functions $w_n=w_{\O\cup\omega_n}$ and $v_n\in H$ such that $\|v_n\|_H^2\le2cap(\omega_n)$, $0\le v_n\le 1$ and $v_n\ge 1$ on $\omega_n$. Notice that, taking $u(x)=d(x,X\setminus\O)$ in Remark \ref{s2r1}, we have that $\{w_n>0\}=\O\cup\omega_n$. Consider the function $(1-v_n)\wedge w_n\in H^1_0(\Omega)$. By Proposition \ref{s2p3}, we have that $\Omega\setminus\{v_n=1\}\subset\{w_\O>0\}$ and since 
$$m(\{v_n=1\})\le \|v_n\|_H^2\le2cap(\omega_n)\to 0,$$ 
we obtain that $\{w_\O>0\}=\O$.
\end{oss}

\begin{oss}
We consider the following relations of equivalence on the Borel measurable functions 
$$u\stackrel{cp}{\sim}v,\hbox{ if }u=v\hbox{ q.e.,}\quad u\stackrel{m}{\sim}v,\hbox{ if }u=v\hbox{ $m$-a.e.}$$
We define the space
\be\label{s5c2e1}
\Ss:=\{u:X\to\R\ :\ u\hbox{ quasi-cont., }u\in H\}/\stackrel{cp}{\sim},
\ee 
and recall that 
\be\label{s5c2e11}
\Ss:=\{u:X\to\R\ :\ u\in H\}/\stackrel{m}{\sim}.
\ee 
Then the Banach spaces $\Ss$ and $H$, both endowed with the norm $\|\cdot\|_{H}$, are isomorphic. In fact, in view of Proposition \ref{s5p1} and Proposition \ref{s5t1}, it is straightforward to check that the map $[u]_{cp}\mapsto[u]_m$ is a bijection, where $[u]_{cp}$ and $[u]_m$ denote the classes of equivalence of $u$ related to $\stackrel{cp}{\sim}$ and $\stackrel{m}{\sim}$, respectively. In the sequel we will not make a distinction between $H$ and $\Ss$.
\end{oss}

For each $\O\subset X$ we define the space
\be\label{s5c1e2}
\Ss_0(\O):=\{u\in H\ :\ u=0\hbox{ q.e. on }X\setminus\O\},
\ee
which, by Theorem \ref{s5t1} $(b)$, is a closed linear subspace of $\Ss$, different from the previously defined 
\be\label{s5c2e3}
H_0(\O)=\{u\in H\ :\ u=0\hbox{ $m$-a.e. on }X\setminus\O\}.
\ee
We note that the inclusion $\Ss_0(\O)\subset H_0(\O)$ holds for each subset $\O\subset X$ and, in general, it is strict. The following Proposition explains the connection between $H_0(\O)$ and $\Ss_0(\O)$.

\begin{prop}\label{s5p2}
For each Borel set $\O$, there is a quasi-open set $\tilde{\O}$ such that:
\begin{enumerate}[(a)]
\item $\tilde\O\subset\O$ $m$-a.e. ,
\item $H_0(\O)=\Ss_0(\tilde{\O})$. 
\end{enumerate}
Moreover, if $\tilde\O$ and $\tilde\O'$ are two quasi-open sets for which $(a)$ and $(b)$ hold, then $\tilde\O=\tilde\O'$ q.e..
\end{prop}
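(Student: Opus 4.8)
The plan is to realise $H_0(\O)$ as the space $\Ss_0(\tilde\O)$ of a \emph{single}, explicitly constructed quasi-open set $\tilde\O$, built from a countable dense family in $H_0(\O)$; the point of this device is that it replaces the delicate passage from an $m$-a.e. to a q.e. support inclusion (which is problematic because $D$ is nonlinear) by a genuine pointwise domination between two fixed functions, to which Proposition \ref{s5p1}(b) applies directly. Concretely, $H_0(\O)$ is a closed subspace of $H$, hence separable, so I would fix a countable $\|\cdot\|_H$-dense sequence $(u_k)_{k\ge1}$ in $H_0(\O)$ and set
$$v:=\sum_{k\ge1}2^{-k}\frac{|u_k|}{1+\|u_k\|_H}.$$
The series converges in $H$ since each term has $H$-norm at most $2^{-k}$ (using $D|u_k|=Du_k$ from Remark \ref{s1r1}); moreover $v\ge0$ and $v\in H_0(\O)$, because $H_0(\O)$ is closed and each $|u_k|\in H_0(\O)$. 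Taking the quasicontinuous representative $\tilde v$ (Proposition \ref{s5t1}(a)), I would define $\tilde\O:=\{\tilde v>0\}$, which is quasi-open by Proposition \ref{s5p1}(a). Property $(a)$ is then immediate: since $v\in H_0(\O)$ vanishes $m$-a.e. on $X\setminus\O$, we get $\tilde\O=\{\tilde v>0\}\subseteq\O$ $m$-a.e.

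For $(b)$ I would argue both inclusions. The inclusion $\Ss_0(\tilde\O)\subseteq H_0(\O)$ is soft: q.e.\ vanishing implies $m$-a.e.\ vanishing (Remark \ref{s5r1}), and combined with $(a)$ this shows every $u\in\Ss_0(\tilde\O)$ vanishes $m$-a.e.\ on $X\setminus\O$. For the reverse inclusion $H_0(\O)\subseteq\Ss_0(\tilde\O)$, fix $k$; from $v\ge 2^{-k}|u_k|/(1+\|u_k\|_H)$ I obtain the pointwise bound $|u_k|\le C_k v$ $m$-a.e.\ with $C_k=2^k(1+\|u_k\|_H)$. Applying Proposition \ref{s5p1}(b) to the nonnegative (a.e.) function $C_kv-|u_k|\in H$ upgrades this to $|\tilde u_k|\le C_k\tilde v$ q.e., so $\tilde u_k=0$ q.e.\ on $\{\tilde v=0\}=X\setminus\tilde\O$, i.e.\ $u_k\in\Ss_0(\tilde\O)$. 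Since $(u_k)$ is $H$-dense in $H_0(\O)$ and $\Ss_0(\tilde\O)$ is closed in $H$ (Proposition \ref{s5t1}(b)), I conclude $H_0(\O)\subseteq\Ss_0(\tilde\O)$, which gives $(b)$.

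For uniqueness, let $\tilde\O'$ be another quasi-open set satisfying $(a)$ and $(b)$, so that $\Ss_0(\tilde\O')=H_0(\O)=\Ss_0(\tilde\O)$. Since $v\in H_0(\O)=\Ss_0(\tilde\O')$, we have $\tilde v=0$ q.e.\ off $\tilde\O'$, whence $\tilde\O=\{\tilde v>0\}\subseteq\tilde\O'$ q.e. For the opposite inclusion I would use a nonnegative $g\in\Ss_0(\tilde\O')$ with $\{\tilde g>0\}=\tilde\O'$ q.e.: approximating $g$ in $H$ by the dense family $(u_k)$ and passing to a q.e.-convergent subsequence (Proposition \ref{s5t1}(b)) shows that q.e.\ point of $\tilde\O'=\{\tilde g>0\}$ lies in some $\{\tilde u_k>0\}\subseteq\tilde\O$, giving $\tilde\O'\subseteq\tilde\O$ q.e.\ and hence $\tilde\O=\tilde\O'$ q.e.

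\textbf{Main obstacle.} The existence part $(a)$--$(b)$ is handled cleanly by the dense-sequence device, which sidesteps the nonlinearity of $D$. The genuinely technical point is the ingredient needed for uniqueness, namely that \emph{every} quasi-open set is, up to a set of zero capacity, the positivity set $\{\tilde g>0\}$ of some nonnegative $g\in H$ — the converse of Proposition \ref{s5p1}(a). I expect to establish this from Definition \ref{s5d3} by writing $\tilde\O'$ as a q.e.-decreasing intersection of open sets (after passing to a subsequence with $\sum_n\cp(\omega_n)<\infty$, so that $\tilde\O'=\bigcap_n(\tilde\O'\cup\omega_n)$ q.e.) and forming a suitable potential from truncated distance functions to their complements; verifying that the resulting $g$ has positivity set exactly $\tilde\O'$ q.e.\ is the step requiring the most care.
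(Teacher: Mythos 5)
Your existence argument is essentially the paper's own proof: the paper likewise fixes a countable dense family $(u_k)_{k\ge1}\subset H_0(\O)$, sets $w=\sum_k 2^{-k}|u_k|/\|u_k\|_H$ and $\tilde\O=\{w>0\}$, obtains $(a)$ from $w\in H_0(\O)$, and gets $(b)$ by density together with Proposition~\ref{s5t1}(b); your version only adds the harmless renormalization $1+\|u_k\|_H$ and spells out the q.e.\ upgrades via Proposition~\ref{s5p1}(b) and the closedness of $\Ss_0(\tilde\O)$, which the paper leaves implicit. The one point of divergence is uniqueness, and there your diagnosis is accurate: the paper's proof opens with ``suppose $\tilde\O=\{w>0\}$ and $\tilde\O'=\{w'>0\}$,'' i.e.\ it \emph{tacitly assumes} that any competing quasi-open set is q.e.\ the positivity set of a nonnegative element of $H$ (the converse of Proposition~\ref{s5p1}(a)), after which it runs exactly your two-line symmetric argument ($w'\in\Ss_0(\tilde\O')=H_0(\O)=\Ss_0(\tilde\O)$ forces $\tilde\O'\subset\tilde\O$ q.e., and symmetrically). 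So the ``main obstacle'' you flag is not an artifact of your route but a step the paper itself elides. One caution on your sketched repair: truncated distance functions are only guaranteed to work in the concrete Cheeger setting, where bounded Lipschitz functions belong to $H^1(X,m)$ because $m$ is finite and the Lipschitz constant is an upper gradient; in the abstract framework $H1$--$H2$, $D1$--$D4$, $\HH1$--$\HH4$ nothing places distance functions in $H$, so the potential for $\tilde\O'$ would have to be assembled from the functions furnished by the capacity definition \eqref{cap} for the sets $\omega_n$ and from elements of $H$ vanishing outside the open sets $\tilde\O'\cup\omega_n$ --- compare Remark~\ref{oss0}, where the paper carries out a closely related construction, but only up to sets of $m$-measure zero and only for $H=H^1(X,m)$ or $H=H^1_0(X,m)$.
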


\begin{proof}
Consider a countable dense subset $(u_k)_{k=1}^\infty=\A\subset H_0(\O)$. Then $\tilde\O$ is the desired quasi-open set, where 
$$\tilde{\O}:=\bigcup_{u\in\A}\{u\ne0\}=\{w>0\}\quad\hbox{and}\quad w=\sum_{k=1}^\infty\frac{|u_k|}{2^k\|u_k\|_H}.$$
In fact, let $u\in H_0(\O)$. Then, there is a sequence $(u_n)_{n\ge1}\subset\A$ such that $u_n\xrightarrow[n\to\infty]{H}u$ and, by Proposition \ref{s5t1} $(b)$, $u=0$ q.e. on $X\setminus\tilde{\O}$ and so, we have the first part of the thesis. Suppose that $\tilde\O=\{w>0\}$ and $\tilde\O'=\{w'>0\}$ be two quasi-open sets satisfying $(a)$ and $(b)$. Then, $w'\in H_0(\O)=\Ss_0(\tilde\O)$ and so, $\tilde\O'=\{w'>0\}\subset\tilde\O$ q.e. and analogously, $\tilde\O\subset\tilde\O'$ q.e.
\end{proof}

For some shape functionals working with energy sets or quasi-open sets makes no difference. In fact, suppose that $F$ is a decreasing functional on the family of closed linear subspaces of $H$. Then we can define the functional $J$ on the family of Borel sets, by $J(\O)=F(H_0(\O))$, and the functional $\tilde{J}$ on the class of quasi-open sets, by $\tilde{J}(\O)=F(\Ss_0(\O))$. The following result shows that the shape optimization problems with measure constraint, related to $J$ and $\tilde J$, are equivalent.

\begin{teo}\label{teoJqo}
Let $F$ be a functional on the family of closed linear spaces of $H$, which is decreasing with respect to the inclusion. Then, we have that 
\begin{align}
&\inf\big\{F(H_0(\O))\ :\ \O\hbox{ Borel, }m(\O)\le c\big\}\label{the1}\\
&\qquad=\inf\big\{F(\Ss_0(\O))\ :\ \O\hbox{ quasi-open, }m(\O)\le c\big\}.\nonumber
\end{align} 
Moreover, if one of the infima is achieved, then the other one is also achieved. 
\end{teo}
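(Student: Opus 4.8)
The plan is to abbreviate the two sides of \eqref{the1} as $I_B=\inf\{F(H_0(\O)):\O\text{ Borel},\ m(\O)\le c\}$ and $I_{qo}=\inf\{F(\Ss_0(\O)):\O\text{ quasi-open},\ m(\O)\le c\}$, to establish $I_{qo}\le I_B$ and $I_B\le I_{qo}$ separately, and then to read off the attainment statement from the very maps used in these two inequalities.

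First I would prove $I_{qo}\le I_B$, the direction handled directly by Proposition \ref{s5p2}. Given an admissible Borel set $\O$ (so $m(\O)\le c$), that proposition produces a quasi-open set $\tilde\O$ with $\tilde\O\subset\O$ $m$-a.e. and $H_0(\O)=\Ss_0(\tilde\O)$. Since $\tilde\O\subset\O$ $m$-a.e. gives $m(\tilde\O)\le m(\O)\le c$, the set $\tilde\O$ is admissible on the quasi-open side and $F(\Ss_0(\tilde\O))=F(H_0(\O))$; taking the infimum over all Borel $\O$ yields $I_{qo}\le I_B$.

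The reverse inequality $I_B\le I_{qo}$ is the delicate one, because a quasi-open set need not be Borel and so cannot be used as a competitor on the Borel side without modification. Given a quasi-open $\O$ with associated open sets $\omega_n$ satisfying $\cp(\omega_n)\to0$, I would set $N=\bigcap_n\omega_n$ and $G=\O\cup N=\bigcap_n(\O\cup\omega_n)$. Then $G$ is Borel (indeed a $G_\delta$, since each $\O\cup\omega_n$ is open), and by monotonicity of the capacity $\cp(N)\le\inf_n\cp(\omega_n)=0$, so $N$ has zero capacity and hence zero measure by Remark \ref{s5r1}. Consequently $X\setminus G$ and $X\setminus\O$ differ only by the capacity-zero set $G\setminus\O\subset N$, which forces $\Ss_0(\O)=\Ss_0(G)$ (the defining condition ``$u=0$ q.e. off the set'' is insensitive to capacity-zero changes) and $m(G)=m(\O)\le c$. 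Using the always-valid inclusion $\Ss_0(G)\subset H_0(G)$ together with the monotonicity of $F$ gives $F(H_0(G))\le F(\Ss_0(G))=F(\Ss_0(\O))$, and since $G$ is admissible on the Borel side we conclude $I_B\le F(H_0(G))\le F(\Ss_0(\O))$; the infimum over quasi-open $\O$ gives $I_B\le I_{qo}$. Combining the two inequalities proves \eqref{the1}.

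Finally, the attainment statement follows from the same two constructions. If $\O^*$ is a Borel minimizer realizing $I_B$, then the quasi-open set $\tilde\O^*$ from Proposition \ref{s5p2} satisfies $F(\Ss_0(\tilde\O^*))=F(H_0(\O^*))=I_B=I_{qo}$ with $m(\tilde\O^*)\le c$, so it is a minimizer on the quasi-open side. Conversely, if $\O^*$ is a quasi-open minimizer realizing $I_{qo}$, the Borel set $G$ built above satisfies $I_B\le F(H_0(G))\le F(\Ss_0(\O^*))=I_{qo}=I_B$, forcing equality, so $G$ attains $I_B$. I expect the only genuine obstacle to be the quasi-open$\to$Borel passage together with the bookkeeping of representatives; both are resolved by the $G_\delta$ trick above and by the fact that $\Ss_0$ and $m$ are unchanged under capacity-zero (hence $m$-null) modifications of the underlying set.
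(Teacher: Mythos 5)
Your proof is correct and takes essentially the same route as the paper: one inequality via Proposition \ref{s5p2}, the other via the inclusion $\Ss_0(\O)\subset H_0(\O)$ combined with the monotonicity of $F$, with attainment read off from the same two constructions. The only real difference is your explicit $G_\delta$ hull $G=\O\cup\bigcap_n\omega_n$, which carefully handles the fact that a quasi-open set need not itself be Borel --- a measurability point the paper instead glosses over by invoking Corollary \ref{s2c1} and Remark \ref{oss0} to restrict the Borel infimum to quasi-open sets.
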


\begin{proof}
We first note that by Corollary \ref{s2c1} and Remark \ref{oss0}, the infimum in the l.h.s. of \eqref{the1} can be considered on the family of quasi-open sets. Since $F$ is a decreasing functional, we have that for each quasi-open $\O\subset X$
$$F(H_0(\O))\le F(\Ss_0(\O)).$$
On the other hand, by Proposition \ref{s5p2}, there exists $\tilde\O$ such that $m(\tilde\O)<m(\O)$ and $F(H_0(\O))=F(\Ss_0(\tilde\O))$ and so, we have that the two infima are equal.\\
\quad Let $\O_{cp}$ be a solution of the problem 
$$\min\left\{F(\Ss_0(\O))\ :\ \O\hbox{ quasi-open, }m(\O)\le c\right\}.$$
Then we have that 
$$F(H_0(\O_{cp}))\le F(\Ss_0(\O_{cp}))=\inf\left\{F(H_0(\O)):\ \O\hbox{ Borel, }m(\O)\le c\right\},$$
and so the infimum on the l.h.s. in \eqref{the1} is achieved, too.\\
\quad Let $\O_m$ be a solution of the problem 
$$\min\left\{F(H_0(\O)):\ \O\hbox{ Borel, }m(\O)\le c\right\},$$
and let $\tilde\O_m\subset\O_m$ a.e. such that $\Ss_0(\tilde\O_m)=H_0(\O_m)$. Then the infimum in the r.h.s. in \eqref{the1} is achieved in $\tilde\O_m$. In fact, we have
$$F(\Ss_0(\tilde\O_m))=F(H_0(\O_m))=\inf\left\{F(\Ss_0(\O))\ :\ \O\hbox{ quasi-open, }m(\O)\le c\right\},$$
which concludes the proof.
\end{proof}

\begin{deff}\label{lbcap}
For each quasi-open set $\O\subset X$, we define
\be\label{lbcp}
\tilde\lambda_k(\O)=\min_{K\subset\tilde H_0(\O)}\max_{0\neq u\in K}\frac{\int_\O|Du|^2dm}{\int_\O u^2dm},
\ee
where the minimum is over the $k$-dimensional subspaces $K$ of $\tilde H_0(\O)$, and 
\be\label{encp}
\tilde E(\O)=\inf\Big\{\frac{1}{2}\int_\O|Du|^2dm+\frac{1}{2}\int_\O|u|^2dm-\int_\O u\,dm:\ u\in \tilde H_0(\O)\Big\}.
\ee
\end{deff}

\begin{teo}\label{main}
In a separable metric space $(X,d)$ with a finite Borel measure $m$ on $X$, consider the linear subspace $H\subset L^2(X,m)$ and the (nonlinear) operator $D:H\to L^2(X,m)$ satisfying conditions $H1$, $H2$, $D1$, $D2$, $D3$, $D4$, $\HH1$, $\HH2$, $\HH3$ and $\HH4$. Then the shape optimization problem
\be\label{sop2}
\min\{\tilde{E}(\O)\ :\ \O\hbox{ quasi-open, }m(\O)\le c\},
\ee
has a solution. Moreover, if $\Phi:[0,+\infty]^\N\to\R$ is a function satisfying the conditions $(\Phi1)$ and $(\Phi1)$ of Section \ref{s4}, then the following shape optimization problem:
\be\label{sop1}
\min\{\Phi(\tilde{\lambda}(\O))\ :\ \O\hbox{ quasi-open, }m(\O)\le c\},
\ee
where $\tilde\lambda(\O)$ denotes the infinite vector $(\tilde\lambda_1(\O),\tilde\lambda_2(\O),\dots)\in[0,+\infty]^\N$, also has a solution.
\end{teo}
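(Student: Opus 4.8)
The plan is to reduce Theorem~\ref{main} to the already-established existence results of Section~\ref{s4} via the equivalence Theorem~\ref{teoJqo}. The key observation is that both $\tilde{E}$ and $\Phi(\tilde\lambda(\cdot))$, defined on quasi-open sets using the spaces $\Ss_0(\O)$, are of the form $F(\Ss_0(\O))$ for a functional $F$ on closed linear subspaces of $H$ that is decreasing with respect to inclusion. Indeed, comparing \eqref{encp} with \eqref{eneq} and \eqref{lbcp} with \eqref{lbk}, the energy and the eigenvalues are given by the same variational formulas, only with $\Ss_0(\O)$ replacing $H_0(\O)$; and in both cases enlarging the subspace can only decrease the infimum (resp.\ the min-max), so the monotonicity hypothesis of Theorem~\ref{teoJqo} is satisfied.

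First I would treat the energy functional. Setting $F(V)=\inf\{\frac12\int|Du|^2dm+\frac12\int u^2dm-\int u\,dm:\ u\in V\}$ for a closed subspace $V\subset H$, I note that $F$ is decreasing with respect to inclusion, that $\tilde E(\O)=F(\Ss_0(\O))$, and that $E(\O)=F(H_0(\O))$ in the notation of Definition~\ref{en}. By Theorem~\ref{teophi} the problem \eqref{sopen} of minimizing $E$ over Borel sets with $m(\O)\le c$ admits a solution (which is an energy set). Applying Theorem~\ref{teoJqo} to this $F$, the two infima in \eqref{the1} coincide and, since the Borel-side infimum is attained, the quasi-open side infimum is attained as well. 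This gives a solution of \eqref{sop2}.

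Next I would treat the spectral problem \eqref{sop1} by the same device, taking $F(V)=\Phi\big(\lambda_1(V),\lambda_2(V),\dots\big)$ where $\lambda_k(V)$ is the min-max over $k$-dimensional subspaces of $V$. Each $\lambda_k$ is decreasing in $V$, so by condition $(\Phi2)$ (monotonicity of $\Phi$) the composite $F$ is again decreasing with respect to inclusion, and $\Phi(\tilde\lambda(\O))=F(\Ss_0(\O))$ while $\Phi(\lambda(\O))=F(H_0(\O))$. By Theorem~\ref{teophi} (equivalently, by Proposition~\ref{s4p2}, condition $(\Phi1)$, $(\Phi2)$ and Theorem~\ref{teoJ}) the Borel problem \eqref{sopphi} has a solution, and a further appeal to Theorem~\ref{teoJqo} transfers existence to the quasi-open formulation \eqref{sop1}.

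The main obstacle is purely bookkeeping: I must verify that the functionals of Definition~\ref{lbcap} really are instances of the abstract $F(\Ss_0(\O))$ with $F$ decreasing, i.e.\ that replacing $H_0$ by $\Ss_0$ does not alter the variational structure but only the admissible subspace. This is immediate once one observes that $\Ss_0(\O)$ is a closed linear subspace of $H$ (Proposition~\ref{s5t1}(b)), so the min-max and the infimum are taken over genuine subspaces of $H$, and that the restriction $\int_\O$ versus $\int_X$ is harmless since functions in $\Ss_0(\O)$ (resp.\ $H_0(\O)$) vanish outside $\O$. Once this identification is in place, Theorem~\ref{teoJqo} does all the work of transporting the existence statement from the Borel setting to the quasi-open one, and no new compactness or semicontinuity argument is required.
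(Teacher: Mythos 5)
Your proposal is correct and follows essentially the same route as the paper's own proof: existence for the Borel/energy-set formulation via Theorem~\ref{teophi}, then transfer to the quasi-open setting via Theorem~\ref{teoJqo}, after noting that $\tilde E$ and $\Phi(\tilde\lambda(\cdot))$ are induced by functionals $F$ on closed subspaces of $H$ that are decreasing with respect to inclusion. In fact you supply more detail than the paper does, which disposes of both claims in three sentences; your verification that $\lambda_k(V)$ and the energy infimum are decreasing in $V$, and that $\Ss_0(\O)$ is a closed subspace so the variational formulas are genuine instances of $F(\Ss_0(\O))$, fills in exactly what the paper leaves implicit.
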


\begin{proof}
By Theorem \ref{teophi}, we have that the problem \eqref{sopphi} has a solution. Since the functionals $\lambda_k$ and $\tilde\lambda_k$ are induced by a decreasing functional on the subspaces of $H$, we can apply Theorem \ref{teoJqo} and so, problem \eqref{sop1} also has a solution. The proof that \eqref{sop2} has a solution is analogous.
\end{proof}

\begin{cor}\label{corcp}
Consider a separable metric space $(X,d)$ and a finite Borel measure $m$ on $X$. Let $H^1(X,m)$ denote the Sobolev space on $(X,d,m)$ and let $Du=g_u$ denote the minimal generalized upper gradient for any $u\in H^1(X,m)$. Suppose that $m$ is doubling and that the space $(X,d,m)$ supports a weak Poincar\'e inequality. Under the condition that the inclusion $H^1(X,m)\hookrightarrow L^2(X,m)$ is compact, we have that the problems \eqref{sop1} and \eqref{sop2} have solutions. In particular, if $X$ has finite diameter, then \eqref{sop1} and \eqref{sop2} have solutions. 
\end{cor}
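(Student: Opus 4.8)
The strategy is to check that the concrete choice $H=H^1(X,m)$, with $Du=g_u$ the minimal generalized upper gradient, satisfies every structural hypothesis $H1$, $H2$, $D1$, $D2$, $D3$, $D4$, $\HH1$, $\HH2$, $\HH3$, $\HH4$ demanded by Theorem \ref{main}, and then to invoke that theorem. The lattice properties $H1$, $H2$ and the calculus rules $D1$--$D4$ for $g_u$ belong to the Cheeger construction and hold as recalled in Section \ref{s1} (see \cite{cheeger}), while completeness $\HH1$ is \cite[Theorem 2.7]{cheeger}. The compact inclusion $\HH2$ is exactly the standing assumption of the statement. Hence only $\HH3$ and $\HH4$ require an argument.

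For $\HH3$ I would reason directly from the definition of the seminorm, using $\|g_u\|_{L^2}=|u|_{1,2}$. By construction $|\cdot|_{1,2}$ is the infimum of $\liminf_j\|g_j\|_{L^2}$ taken over all sequences $u_j\to u$ in $L^2$ admitting upper gradients $g_j$; it is therefore the lower semicontinuous envelope (with respect to $L^2$ convergence) of the functional $u\mapsto\inf\{\|g\|_{L^2}:g\hbox{ upper gradient of }u\}$, and as such is automatically $L^2$-lower semicontinuous. Thus if $u_n\to u$ in $L^2$ with $\sup_n\|u_n\|_H<\infty$, a diagonal extraction from the sequences approximating each $g_{u_n}$ yields $|u|_{1,2}\le\liminf_n|u_n|_{1,2}<\infty$, so $u\in H$ and $\int_X|Du|^2dm\le\liminf_n\int_X|Du_n|^2dm$, which is $\HH3$.

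The decisive step is $\HH4$, and it is here that the doubling condition and the weak Poincar\'e inequality are used. Under these two hypotheses the Lipschitz functions are dense in $H^1(X,m)$ for the norm $\|\cdot\|_{1,2}$; this is the classical density theorem in the Cheeger/Newtonian setting (see \cite{cheeger}). Since every Lipschitz function is continuous, the subspace $H\cap C(X)$ contains a $\|\cdot\|_H$-dense set and is therefore itself dense, which is exactly $\HH4$. With $H1$ through $\HH4$ verified for $(H^1(X,m),g_u)$, Theorem \ref{main} applies unchanged and produces solutions of \eqref{sop1} and \eqref{sop2}. I expect this density step to be the main obstacle, since it is the only point that genuinely exploits the geometric assumptions rather than the abstract machinery, and it requires quoting the density result in precisely the form adapted to the Cheeger space $H^1(X,m)$.

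For the final assertion I would remove the compactness hypothesis by deducing it from the geometry. When $X$ has finite diameter, iterating the doubling inequality produces a relative measure bound of the form in condition (a) of Remark \ref{s4r2} with some exponent $s$ (comparing $B(x,r)$ with $B(x_0,r_0)$ via $B(x_0,r_0)\subset B(x,2r_0)$), while the weak Poincar\'e inequality furnishes condition (b); Remark \ref{s4r2} then yields the compact inclusion $H^1(X,m)\hookrightarrow L^2(X,m)$. The first part of the corollary now applies and delivers the desired minimizers.
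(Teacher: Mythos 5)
Your proposal is correct and follows essentially the same route as the paper: both verify the hypotheses of Theorem \ref{main} for $(H^1(X,m),g_u)$, obtaining $\HH4$ from the density of (locally) Lipschitz functions under the doubling and weak Poincar\'e assumptions (the paper cites \cite[Theorem 4.24]{cheeger}), and settling the finite-diameter claim via Remark \ref{s4r2}. Your explicit verifications of $\HH3$ and of condition (a) of Remark \ref{s4r2} merely spell out steps the paper leaves implicit.
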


\begin{proof}
Since $m$ is doubling and $(X,d,m)$ supports a weak Poincar\'e inequality of type $(1,2)$, we can apply \cite[Theorem 4.24]{cheeger}. Thus we have that the locally Lipschitz functions are dense in $H^1(X,m)$ and so, condition $\HH4$ is satisfied. Now the existence is a consequence of Theorem \ref{main}. The last claim follows from Remark \ref{s4r2}.
\end{proof}

%%%%%%%%%%%%%%%%%%%%%%%%%%%%%%%%%%%%%%%%%%%%%%%%%%
\section{Applications and examples}\label{s6}

In the previous sections we developed a general theory which allows us to threat shape optimization problems in a large class of metric spaces. Theorem \ref{main} provides a solution of the problem
$$\min\Big\{\Phi(\tilde\lambda(\O))\ :\ \O\subset X,\ \O\hbox{ quasi-open, }m(\O)\le c\Big\},$$ 
where $\Phi$ is a suitable function (see assumptions $(\Phi1)$ and $(\Phi2)$ in Section \ref{s4}), $\tilde\lambda(\O)$ is defined through a (non-linear) gradient-like functional (see Section \ref{s5}) and $c>0$. In this section we apply this result to various situations. We start discussing the classical problem when $X$ is a domain in $\R^d$ and continue with examples concerning more complex structures as Finsler manifolds, Carnot-Caratheodory spaces and infinite dimensional Hilbert spaces with Gaussian measures. We notice that for a fixed ambient space $(X,d,m)$, the shape functionals we consider depend on the choice of the space $H$. In fact, even in the case of a regular domain $X\subset\R^d$, we have that if $H=H^1_0(X)$, then $\tilde\lambda_1$ is the classical first eigenvalue of the Dirichlet Laplacian, while if $H=H^1(X)$, then $\tilde\lambda_1$, as defined in \eqref{lbcap}, is the first eigenvalue of the Laplacian with mixed Dirichlet-Neumann boundary conditions (see Section \ref{s61}). In order to distinguish these, and other similar situations, we work with the following notation:
\be\label{s6e1}
\lambda_k(\O;H):=\tilde\lambda_k(\O),\qquad E(\O;H)=\tilde E(\O),
\ee
where $\tilde\lambda_k$ and $\tilde E$ are as in Definition \ref{lbcap} and $\O$ is a quasi-open set (see Definition \ref{s5d3}).
We also adopt the notation 
\be\label{s6e3}
\lambda(\O;H)=(\lambda_1(\O;H),\lambda_1(\O;H),\dots)\in [0,+\infty]^\N.
\ee

%%%%%%%%%%%%%%%%%%%%%%%%%%%%%%%%%%%%%%%%%%%%%%%%%%
\subsection{Bounded domains in $\R^d$}\label{s61}

Consider a bounded open set $\Dr\subset\R^d$. Let $H$ be the Sobolev space $H^1_0(\Dr)\subset L^2(\Dr)$ and $D$ the Euclidean norm of the weak gradient, that is $Du=|\nabla u|$. Then, for any quasi-open set $\O\subset\Dr$, the space $\tilde H_0(\O)$ is the usual Sobolev space $H^1_0(\O)$ and $\lambda_k(\O;H)$, defined in \eqref{s6e1}, is the $k$-th eigenvalue of the Dirichlet Laplacian $-\Delta$ on $\O$. In view of the general existence result Theorem \ref{main}, we have the following:

\begin{teo}\label{s61t1}
Consider $\Dr\subset\R^d$ a bounded open set of the Euclidean space $\R^d$ and suppose that $\Phi:\R^\N\to[0,+\infty]$ satisfies conditions $(\Phi1)$ and $(\Phi2)$ from Section \ref{s4}. Then the optimization problem
$$\min\Big\{\Phi(\lambda(\O))\ :\ \O\subset\Dr,\ \O\hbox{ quasi-open, }|\O|\le c\Big\},$$
admits at least one solution, where $\lambda(\O)$ is defined in \eqref{s6e3}.
\end{teo}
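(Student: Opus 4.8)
The plan is to recognize the problem as the special case of Theorem \ref{main} obtained by taking the ambient metric measure space to be $(\Dr,|\cdot|,\LL^d)$, with $H=H^1_0(\Dr)$ and $Du=|\nabla u|$ the Euclidean modulus of the weak gradient. The bulk of the argument is therefore a verification that this concrete choice satisfies all the structural hypotheses $H1$, $H2$, $D1$--$D4$, $\HH1$--$\HH4$, after which the abstract existence theorem applies verbatim.

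First I would record the basic admissibility of the ambient space: since $\Dr$ is bounded, the Lebesgue measure is finite on $\Dr$, and every nonempty open subset of $\Dr$ has strictly positive Lebesgue measure, so the standing assumptions on $(X,d,m)$ are met. The lattice axioms $H1$ and $H2$ are the familiar facts that $H^1_0(\Dr)$ is stable under $u\vee v$, $u\wedge v$ and under the truncation $u\wedge 1$. For the gradient axioms, $D1$ and $D3$ are immediate from $Du=|\nabla u|\ge0$ and $|\nabla(\alpha u)|=|\alpha|\,|\nabla u|$; $D2$ is the pointwise triangle inequality $|\nabla(u+v)|\le|\nabla u|+|\nabla v|$; and $D4$ is the standard locality property of the weak gradient, namely that $\nabla(u\vee v)=\nabla u$ a.e. on $\{u>v\}$ and $\nabla(u\vee v)=\nabla v$ a.e. on $\{u\le v\}$ (the two gradients coinciding a.e. on $\{u=v\}$).

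Next I would dispatch the functional-analytic hypotheses. Completeness $\HH1$ holds because $H^1_0(\Dr)$ is a Hilbert space; the compact embedding $\HH2$ is exactly the Rellich--Kondrachov theorem, available precisely because $\Dr$ is bounded; and the lower semicontinuity $\HH3$ follows by extracting from a bounded sequence a weakly $H^1_0$-convergent subsequence and invoking the weak lower semicontinuity of the convex Dirichlet integral $u\mapsto\int|\nabla u|^2$. Finally $\HH4$ holds since $C^\infty_c(\Dr)\subset H\cap C(\Dr)$ is dense in $H^1_0(\Dr)$ by definition.

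With all hypotheses in place, Theorem \ref{main} yields a quasi-open minimizer of $\Phi(\tilde\lambda(\cdot))$, and the only remaining, genuinely delicate step is to reconcile the abstract objects with their classical Euclidean counterparts. One must check that the capacity of Definition \ref{s5d1} generated by this $H$ and $D$ agrees, up to the relevant equivalence, with the classical Sobolev $H^1$-capacity on $\R^d$, so that \emph{quasi-open} has its usual meaning, that $\Ss_0(\O)=H^1_0(\O)$ for quasi-open $\O\subset\Dr$, and hence that $\tilde\lambda_k(\O)$ from \eqref{lbcp} coincides with the $k$-th eigenvalue of the Dirichlet Laplacian on $\O$. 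This identification, rather than any of the routine verifications above, is where the real content lies, but it is classical and can be read off from the capacity theory in \cite{pierrehenrot}. Substituting it turns the abstract conclusion of Theorem \ref{main} into the stated existence of a minimizer for $\Phi(\lambda(\O))$ over quasi-open $\O\subset\Dr$ with $|\O|\le c$.
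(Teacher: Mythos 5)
Your proposal matches the paper's own treatment: Theorem \ref{s61t1} is obtained there precisely by taking $H=H^1_0(\Dr)$ with $Du=|\nabla u|$ and invoking the abstract existence result Theorem \ref{main}, with the hypotheses $H1$, $H2$, $D1$--$D4$, $\HH1$--$\HH4$ holding for exactly the reasons you give (Rellich--Kondrachov for $\HH2$, density of $C^\infty_c(\Dr)$ for $\HH4$). The only difference is one of emphasis: since $\lambda(\O)$ in the statement is by definition the abstract $\tilde\lambda(\O)$ of \eqref{s6e3}, the identification with the classical Dirichlet eigenvalues that you single out as the genuinely delicate step is not needed for the literal statement, and the paper accordingly records it only as an interpretive remark in the paragraph preceding the theorem rather than as part of a proof.
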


\begin{oss}\label{s61r2}
A different situation occurs if one considers the functional $\lambda_k(\O;H)$ with $H=H^1(\Dr)$. In fact, if we take $\O$ and $\Dr$ regular and $k=1$, then $\lambda_1(\O;H)$ is the first eigenvalue of the Laplacian with Dirichlet condition on $\partial\O\setminus\partial\Dr$ and Neumann condition on $\partial\O\cap\partial\Dr$. With some mild regularity assumptions on $\Dr$ (for instance, $\Dr$ Lipschitz), which imply the compact inclusion of $H^1(\Dr)$ in $L^2(\Dr)$, we obtain that the problem
\be\label{s61e1}
\min\Big\{\Phi(\lambda(\O;\Dr))\ :\ \O\subset\Dr,\ \O\hbox{ quasi-open, }|\O|\le c\Big\},
\ee
has a solution, where $\Phi$ satisfies the assumptions $(\Phi1)$ and $(\Phi2)$ and $\lambda(\O;\Dr):=\lambda(\O;H^1(\Dr))$ is defined as in \eqref{s6e3}.
\end{oss}

\begin{oss}\label{s61r3}
Suppose that $d=2$, $\Dr=(0,1)\times(0,1)$ and 
$$H=\Big\{u\in H^1(\Dr): u(\cdot,0)=u(\cdot,1),\ u(0,\cdot)=u(1,\cdot)\Big\}.$$ 
For any quasi-open set $\O\subset\R^d$, which is $(1,0)$ and $(0,1)$-periodic, i.e. invariant with respect to the translations of $\R^d$ along the vectors $(1,0)$ and $(0,1)$, we define 
$$\lambda_{k,per}(\O):=\lambda_k(\O\cap\Dr;H).$$
In view of the general existence Theorem \ref{main}, we have that the problem:
\be\label{s61e2}
\min\Big\{\Phi(\lambda_{per}(\O))\ :\ \O\subset\R^d,\ \O\hbox{ quasi-open and periodic, }|\O\cap\Dr|\le c\Big\},
\ee
has a solution, where as always, $\Phi$ satisfies the assumptions $(\Phi1)$ and $(\Phi2)$ and $\lambda_{per}(\O;\Dr):=\lambda(\O\cap\Dr;H)$ is as in \eqref{s6e3}. 
\end{oss}

%%%%%%%%%%%%%%%%%%%%%%%%%%%%%%%%%%%%%%%%%%%%%%%%%%
\subsection{Finsler manifolds}

Consider a differentiable manifold $M$ of dimension $d$ endowed with a Finsler structure (for a detailed introduction to the topic see \cite{finsler}), i.e. with a map $F:TM\rightarrow[0,+\infty)$ which has the following properties:
\begin{enumerate}
\item $F$ is $C^\infty$ on $TM\setminus 0$,
\item $F$ is absolutely homogeneous, i.e. $F(x,\lambda X)=|\lambda|F(x,X)$, $\forall \lambda\in\R$,
\item $F$ is strictly convex, i.e. the Hessian matrix $g_{ij}(x)=\frac{1}{2}\frac{\partial^2}{\partial X^i\partial X^j}[F^2](x,X)$ is positive definite for each $(x,X)\in TM$.
\end{enumerate}
With these properties, $F(x,\cdot):T_x M\rightarrow [0,+\infty)$ is a norm for each $x\in M$. Writing each tangent vector in the base $(\frac{\partial}{\partial x^1},\dots,\frac{\partial}{\partial x^d})$, induced by a local coordinate chart, we obtain an isomorphism between $\R^d$ and $T_x M$ and so, we can consider the dual norm $F^\ast$ with respect to the standard scalar product on $R^d$. We define the gradient of a function $f\in C^\infty(M)$ as $Df(x):=F^\ast(x,df_x)$, where $df_x$ stays for the differential of $f$ in the point $x\in M$. The Finsler manifold $(M,F)$ is also a metric space with the distance:
$$d_F(x,y)=\inf\Big\{\int_0^1 F(\gamma(t),\dot\gamma(t))\,dt\ :\ \gamma(0)=x,\ \gamma(1)=y\Big\}.$$
For any finite Borel measure $\mu$ on $M$, we define $H^1_0(M,F,\mu)$ as the closure of the set of differentiable functions with compact support $C^\infty_c(M)$, with respect to the norm 
$$\|u\|:=\sqrt{\|u\|_{L^2(\mu)}^2+\|Du\|_{L^2(\mu)}^2}.$$
The functionals $\lambda_k$, $E$ and $\lambda$ are defined as in \eqref{s6e1} and \eqref{s6e3}, on the class of quasi-open sets, related to the $H^1(M,F,\mu)$ capacity. Various choices for the measure $\mu$ are available, according to the nature of the Finsler manifold $M$. For example, if $M$ is an open subset of $\R^d$, it is natural to consider the Lebesgue measure $\mu=\L^d$. In this case, the non-linear operator associated to the functional $\int F^\ast(x,du_x)^2dx$ is called Finsler Laplacian. On the other hand, for a generic manifold $M$, a canonical choice for $\mu$ is the Busemann-Hausdorff measure $\mu_F$, given by the volume form 
$$\frac{|B_1(0)|}{|I_x|}dx^1\wedge\dots\wedge dx^d,$$
where $B_1(0)$ is the unit ball in $\R^d$ with respect to the Euclidean distance, 
$$I_x=\{X\in T_x M:F(x,X)\le 1\},$$ 
and $|\cdot|$ is the Lebesgue measure. The Busemann-Hausdorff measure $\mu_F$ is the $d$-Hausdorff measure with respect to the distance $d_F$. The non-linear operator associated to the functional $\int F^\ast(x,du_x)^2d\mu_F(x)$ is the generalisation of the Laplace-Beltrami operator. In view of Section \ref{s5}, we have existence results for the shape functionals depending on the spectrum $\lambda(\O):=\lambda(\O;H^1(M,F,\mu))$ (see \ref{s6e3}) and related to the Finsler Laplacian and the generalized Laplace-Beltrami operators.

\begin{teo}\label{fint}
Given a compact Finsler manifold $(M,F)$ with Busemann-Hausdorff measure $\mu_F$ and a functional $\Phi:[0,+\infty]^\N\to\R$ satisfying the assumptions $(\Phi_1)$ and $(\Phi_2)$, the following problems have solutions:
$$\min\Big\{\Phi(\lambda(\O))\ :\ \mu_F(\O)\le c,\ \O\hbox{ quasi-open, }\O\subset M\Big\},$$
$$\min\Big\{E(\O)\ :\ \mu_F(\O)\le c,\ \O\hbox{ quasi-open, }\O\subset M\Big\},$$
for any fixed $c\le \mu_F(M)$.
\end{teo}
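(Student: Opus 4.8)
The plan is to realize this statement as a direct instance of the general existence result Theorem \ref{main}, so that the whole task reduces to verifying that the Finsler data fit the abstract framework of Sections \ref{s1} and \ref{s5}. Concretely, I would take $X=M$ with the metric $d_F$, the finite Borel measure $\m=\mu_F$ (finite because $M$ is compact), the space $H=H^1(M,F,\mu_F)$ (which, $M$ being compact and boundaryless, coincides with the closure of $C^\infty(M)$ described in the text, since $C^\infty_c(M)=C^\infty(M)$), and the nonlinear gradient operator $Du(x)=F^\ast(x,du_x)$ extended from $C^\infty(M)$ to $H$ by density. With these identifications the definitions \eqref{s6e1} give $\lambda(\O)=\tilde\lambda(\O)$ and $E(\O)=\tilde E(\O)$, so the two problems in the statement are precisely \eqref{sop1} and \eqref{sop2} for this choice of $H$ and $D$.

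First I would check the purely algebraic hypotheses $H1$, $H2$ and $D1$--$D4$. Since $F^\ast(x,\cdot)$ is a norm on each cotangent space, it is nonnegative, positively one-homogeneous and subadditive; transported through the weak differential these yield $D1$, $D3$ and $D2$ respectively, while $H1$ and $H2$ are the standard lattice properties of Sobolev functions ($u\vee v,u\wedge v,u\wedge1\in H$). The locality identity $D4$ follows from the fact that the weak differential of $u\vee v$ equals $du_x$ on $\{u>v\}$ and $dv_x$ on $\{u<v\}$, together with $du_x=dv_x$ a.e. on $\{u=v\}$; applying $F^\ast(x,\cdot)$ gives $D(u\vee v)=Du\cdot I_{\{u>v\}}+Dv\cdot I_{\{u\le v\}}$. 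Condition $\HH4$ is immediate by construction, as $H\cap C(M)\supset C^\infty(M)$ is dense in $H$ by the very definition of $H$ as a closure, and completeness $\HH1$ holds because $H$ is a closed subspace of $L^2(M,\mu_F)$.

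The analytic core, and the step I expect to be the main obstacle, is the pair $\HH2$ (compactness of the inclusion $H\hookrightarrow L^2(M,\mu_F)$) and $\HH3$ (lower semicontinuity of $u\mapsto\int_M|Du|^2\,d\mu_F$). Here I would exploit the compactness of $M$: fix an auxiliary Riemannian metric $g^0$ on $M$; since the unit sphere bundle is compact and $F$ is continuous and positive on $TM\setminus0$, the Finsler norm $F(x,\cdot)$ is uniformly two-sided comparable to $\sqrt{g^0_x(\cdot,\cdot)}$, hence so are the dual norms $F^\ast$ and the volumes $\mu_F$ and $\mathrm{vol}_{g^0}$. Consequently $H$ coincides as a vector space with the Riemannian Sobolev space $H^1(M,g^0)$ with equivalent norms, and $\HH2$ follows from the Rellich--Kondrachov theorem on the compact manifold $(M,g^0)$. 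For $\HH3$, the functional $u\mapsto\int_M F^\ast(x,du_x)^2\,d\mu_F$ is convex in the gradient (as $F^\ast(x,\cdot)^2$ is convex) and coercive, so a sequence bounded in $H$ and convergent in $L^2$ is bounded in the Hilbert space $H^1(M,g^0)$, hence admits a weakly convergent subsequence there; weak lower semicontinuity of the convex integrand then yields the required inequality.

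Having verified $H1$, $H2$, $D1$--$D4$ and $\HH1$--$\HH4$, I would invoke Theorem \ref{main} (equivalently Corollary \ref{corcp}, since the comparison with $(M,g^0)$ shows that $(M,d_F,\mu_F)$ is doubling and supports a weak Poincar\'e inequality), obtaining solutions of \eqref{sop1} and \eqref{sop2}. Translating back through \eqref{s6e1} gives solutions of the two stated problems for every $c\le\mu_F(M)$, which completes the proof.
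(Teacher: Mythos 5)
Your proposal is correct and essentially reproduces the paper's own proof: both reduce the statement to Theorem \ref{main}, treat the lattice and norm hypotheses $H1$, $H2$, $D1$--$D4$, $\HH1$, $\HH3$, $\HH4$ as routine, and isolate the compact embedding $\HH2$ as the real content, which you obtain by a global two-sided comparison of $F$ with an auxiliary Riemannian metric followed by Rellich--Kondrachov, while the paper runs the same uniform-comparability idea chart by chart with a finite atlas and a partition of unity (both also pointing to the metric-space alternative of Remark \ref{s4r2}). The only slip is your justification of $\HH1$ --- $H$ is not a closed subspace of $L^2(M,\mu_F)$ in the $L^2$ topology, but complete for $\|\cdot\|_H$ --- and it is harmless, since your norm equivalence with the complete space $H^1(M,g^0)$ already supplies completeness.
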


\begin{proof}
It is easy to see that the conditions $H1,H2$, $D1,D2,D3,D4$, $\HH1,\HH3$ and $\HH_4$ are satisfied for the space $H^1(M,F,\mu_F)$ and the Finsler gradient $D$. It remains to prove the compact inclusion (condition $\HH2$). It is a direct consequence of a general result for metric measure spaces (see Remark \ref{s4r2}), but it can also be obtained with a standard partition of unity argument. In fact, let $U$ be a coordinate neighbourhood centered in $x\in M$. Since $F(y,\cdot):T_yM\to\R$ is a norm for each $y\in U$, we have that there exist positive constants $c_y$ and $C_y$ such that
$$c_y|Y|\le F(y,Y)\le C_y |Y|,\ \forall Y\in T_yM,$$
where $Y=\sum_j\frac{\partial}{\partial x^j}Y^j$ and $|Y|=\sqrt{\sum_{j=1}^d|Y^j|^2}$.
Moreover, the constants $c_y$ and $C_y$ are continuous in $y$ and so, there is a coordinate neighbourhood $U_x$ centered in $x$ and positive constants $c_x, C_x$ such that
\be\label{fin}
c_x|Y|\le F(y,Y)\le C_x |Y|,\ \forall Y\in T_yM,\ y\in U_x.
\ee
Let $U_{x_k}$, $k=1,\dots,m$ be a finite cover of $M$ with coordinate neighbourhoods with constants $c_k$ and $C_k$ for which \eqref{fin} holds. Let $\phi_k$ be a partition of unity on $M$ such that $supp(\phi_k)\subset U_{x_k}$. Then the norm $\|u\|_F$ is equivalent to the norm $\sum_k\|\phi_ku\|_F$. But $\phi_k u$ has a support in $U_{x_k}$ (i.e. $u\in H^1_0(U_{x_k},F)$) and the estimate \eqref{fin} gives us the compact inclusion of each $H^1_0(U_{x_k},F)$ in $L^2$. Thus, we obtain that the inclusion of $H^1(M,F)$ in $L^2$ is compact. Applying Theorem \ref{main}, we have the thesis.
\end{proof}

\begin{teo}\label{finlap}
Consider an open set $M\subset\R^d$ endowed with a Finsler structure $F$ and the Lebesgue measure $\L^d$. If the diameter of $M$ with respect to the Finsler metric $d_F$ is finite, then the following problems have solutions:
$$\min\Big\{\Phi(\lambda(\O))\ :\ \mu_F(\O)\le c,\ \O\hbox{ quasi-open, }\O\subset M\Big\},$$
$$\min\{E(\O)\ :\ |\O|\le c,\ \O\hbox{ quasi-open, }\O\subset M\},$$
where $\Phi:[0,+\infty]^\N\to\R$ satisfies assumptions $(\Phi_1)$ and $(\Phi_2)$, $|\O|$ denotes the Lebesgue measure of $\O$ and $c\le |M|$.
\end{teo}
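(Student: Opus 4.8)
The plan is to deduce the statement from the abstract existence result Theorem \ref{main}, applied to the space $H=H^1_0(M,F,\L^d)$ together with the (nonlinear) Finsler gradient $Du(x)=F^\ast(x,du_x)$ on $M$ equipped with the Lebesgue measure. Thus the whole proof reduces to verifying the ten conditions $H1$, $H2$, $D1$, $D2$, $D3$, $D4$, $\HH1$, $\HH2$, $\HH3$, $\HH4$ for this pair; once they are established, Theorem \ref{main} yields at once solutions to both the spectral problem for $\Phi(\lambda(\O))$ and the Dirichlet-energy problem for $E(\O)$, which is exactly the assertion.

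All of these conditions except the compact embedding are checked precisely as in the proof of Theorem \ref{fint}, and none of them uses the diameter hypothesis, since they are either purely algebraic or strictly local. Indeed, $H1$ and $H2$ are the lattice and Stone properties of the Sobolev space, which survive the passage to the closure of $C^\infty_c(M)$; conditions $D1$, $D2$, $D3$ are immediate from the fact that $F^\ast(x,\cdot)$ is a norm on the cotangent space at every $x$ (nonnegativity, the triangle inequality, and absolute homogeneity respectively), while $D4$ follows from the pointwise identities $d(u\vee v)=du$ on $\{u>v\}$ and $d(u\vee v)=dv$ on $\{u\le v\}$ for weak differentials. Completeness $\HH1$ holds since $H$ is by definition the closure of $C^\infty_c(M)$ with respect to $\|\cdot\|$, hence a Banach space; the semicontinuity $\HH3$ is the standard lower semicontinuity of the convex integral functional $u\mapsto\int_M F^\ast(x,du_x)^2\,dx$ under $L^2$-convergence of sequences bounded in $H$; and $\HH4$ is automatic, because $H$ is the closure of $C^\infty_c(M)\subset C(M)$.

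The only condition that genuinely requires the finite-diameter assumption is the compact inclusion $\HH2$, namely $H^1_0(M,F,\L^d)\hookrightarrow L^2(M,\L^d)$, and this is where the finite Finsler diameter of $M$ takes over the role played by the compactness of $M$ in Theorem \ref{fint}. Smoothness and strict convexity of $F$ give, on every coordinate ball compactly contained in $M$, a comparison $c_x|\xi|\le F^\ast(x,\xi)\le C_x|\xi|$ of the form \eqref{fin}, so that on such a ball the Finsler norm is equivalent to the Euclidean $H^1$ norm and the classical Rellich--Kondrachov theorem applies. The plan is to globalize this local compactness; the cleanest route is to invoke the general metric-measure compactness criterion recalled in Remark \ref{s4r2}, checking that the space $(M,d_F,\L^d)$ has finite diameter (by hypothesis), is doubling, and supports a weak $(1,2)$-Poincar\'e inequality relative to $d_F$, which delivers $\HH2$ directly. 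Alternatively, one can argue by hand: the finite $d_F$-diameter should make $(M,d_F)$ totally bounded (via a Hopf--Rinow-type argument), so that a bounded sequence in $H$ is carried by finitely many charts on which \eqref{fin} reduces matters to the Euclidean case, and a diagonal extraction produces an $L^2$-convergent subsequence.

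The main obstacle is precisely this last step. The comparison constants $c_x,C_x$ in \eqref{fin} may degenerate as $x$ approaches $\partial M$ or escapes to infinity, so the finite partition-of-unity argument of Theorem \ref{fint} does not transfer verbatim, and the real work consists in using the finiteness of the Finsler diameter (together with the finiteness of $\L^d(M)$, which is built into the standing assumption that $m$ is a finite measure) to prevent mass from concentrating there. Establishing either the doubling and Poincar\'e properties of $(M,d_F,\L^d)$ or the total boundedness of $(M,d_F)$ — and thereby ruling out any loss of compactness at the boundary — is the crux of the argument; once $\HH2$ is secured, the conclusion for both optimization problems follows immediately from Theorem \ref{main}.
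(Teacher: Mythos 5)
Your reduction to Theorem \ref{main}, your axiom-by-axiom bookkeeping, and your diagnosis that everything except the compact embedding $\HH2$ transfers from the proof of Theorem \ref{fint} are all correct, and you have correctly located where the finite-diameter hypothesis must enter. But the proposal stops exactly at the decisive point: you never prove $\HH2$; you name two candidate strategies and then declare that establishing doubling and the Poincar\'e inequality for $(M,d_F,\L^d)$, or total boundedness of $(M,d_F)$, ``is the crux of the argument''. As written this is a genuine gap. Moreover, the hand-made fallback you sketch would not close it even if carried out: total boundedness of $(M,d_F)$ together with a bound in $H$ does not by itself yield $L^2$-precompactness (one still needs uniform control of oscillations on small balls, i.e.\ precisely a Poincar\'e-type inequality), and, as you yourself observe, the comparison constants in \eqref{fin} can degenerate near $\partial M$, so no finite-chart partition-of-unity argument in the style of Theorem \ref{fint} is available.

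The paper closes this step by a different and much shorter move, which your proposal circles around without making: it observes that $H^1_0(M,F,\L^d)$ coincides with the Cheeger Sobolev space $H^1(M,\L^d)$ of the metric measure space $(M,d_F,\L^d)$, and that the Finsler norm of the gradient, $F^\ast(x,du_x)$, is precisely the minimal generalized upper gradient $g_u$ relative to the distance $d_F$ (the analogous identification in the Carnot--Carath\'eodory subsection is cited from \cite{smetp}). Once the problem is recast intrinsically, nothing is verified by hand: Corollary \ref{corcp} applies verbatim, with the finite $d_F$-diameter feeding the Hajlasz--Koskela compactness criterion of Remark \ref{s4r2} to give $\HH2$, and doubling plus the weak Poincar\'e inequality --- hypotheses of Corollary \ref{corcp}, which the paper treats as part of this setting rather than reproving for a general open $M\subset\R^d$ --- giving $\HH4$ via Cheeger's density theorem rather than via your closure-of-$C^\infty_c$ observation. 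So your instinct to invoke Remark \ref{s4r2} was the right one, but to turn the plan into a proof you must either carry out the identification with the Cheeger space so that Corollary \ref{corcp} can be used as a black box, or actually verify the measure lower bound and the weak $(1,2)$-Poincar\'e inequality for $(M,d_F,\L^d)$; the proposal does neither.
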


\begin{proof}
We first observe that the Sobolev space $H^1_0(\O,F,\L^d)$ is in fact the space $H^1(M,\LL^d)$ as defined on the measure metric space $(M,d_F,\L^d)$. Moreover, the Finsler norm of the gradient of $u$ is precisely the upper gradient $g_u$ of $u$ with respect to the metric $d_F$. To conclude, it is enough to apply Corollary \ref{corcp}.
\end{proof}

\begin{oss}
In the hypotheses of Theorem \ref{finlap} and with the additional assumption that $F$ does not depend on $x\in M$, we can apply the symmetrization technique from \cite{kawohl} to obtain that, when $c>0$ is small enough, the optimal set for the problem is a ball (with respect to the distance $d_F$) of Lebesgue measure $c$. On the other hand, if we consider a Riemannian manifold $(M,g)$ in Theorem \ref{fint}, i.e. $F(x,X)=\sqrt{g_{ij}(x)X^iX^j}$, the optimal sets for $\lambda_1$, of measure $c$, are asymptotically close to geodesic balls as $c\to0$ (see \cite{sicbaldi} for a precise statement and hypotheses on $M$). We do not know if an analogous result holds for a generic Finsler manifold.
\end{oss}

%%%%%%%%%%%%%%%%%%%%%%%%%%%%%%%%%%%%%%%%%%%%%%%%%%
\subsection{Hilbert spaces with Gaussian measure}

Consider a separable Hilbert space $(\HH,\<\cdot,\cdot\>_{\HH})$ with an orthonormal base $(e_k)_{k\in\N}$. Suppose that $\mu=N_{Q}$ is a Gaussian measure on $\HH$ with mean $0$ and covariance operator $Q$ (positive, of trace class) such that 
$$Qe_k=\lambda_k e_k,$$ 
where $0<\lambda_1\le\lambda_2\le\dots\le\lambda_n\le\dots$ is the spectrum of $Q$.

Denote with $\EE(\HH)$ the space of all linear combinations of the functions on $\HH$ which have the form $E_h(x)=e^{i\<h,x\>}$ for some $h\in\HH$. Then, the linear operator 
$$\nabla:\EE(\HH)\subset L^2(\HH,\mu)\to L^2(\HH,\mu;\HH),\qquad\qquad\nabla E_h=ih E_h,$$
is closable. We define the Sobolev space $W^{1,2}(\HH)$ as the domain of the closure of $\nabla$. Thus, for any function $u\in W^{1,2}(\HH)$, we defined the gradient $\nabla u\in L^2(\HH,\mu;\HH)$.

We denote with $\nabla_k u\in L^2(\HH,\mu)$ the components of the gradient in $W^{1,2}(\HH)$
$$\nabla_k u=\<\nabla u,e_k\>_{\HH}.$$
We have the following integration by parts formula:
$$\int_{\HH}\nabla_k uv\,d\mu+\int_{\HH}u\nabla_k v\,d\mu
=\frac{1}{\lambda_k}\int_{\HH}x_k uv\,d\mu.$$
If $\nabla_k u\in W^{1,2}(\HH)$, we have
$$\int_{\HH}\nabla_k (\nabla_k u) v\,d\mu+\int_{\HH}\nabla_k u \nabla_k v\,d\mu=\frac{1}{\lambda_k}\int_{\HH}x_k\nabla_k uv\,d\mu,$$
$$-\int_{\HH}\nabla_k (\nabla_k u) v\,d\mu-\frac{1}{\lambda_k}\int_{\HH}x_k\nabla_k uv\,d\mu=\int_{\HH}\nabla_k u\nabla_k v\,d\mu,$$
and summing (formally) over $k\in\N$, we obtain
$$\int_{\HH}(-Tr[\nabla^2 u]-\<Q^{-1}x,\nabla u\>_{\HH})v\,d\mu
=\int_{\HH}\<\nabla u,\nabla v\>_{\HH}\,d\mu,$$
where $\<Q^{-1}x,\nabla u\>_{\HH}:=\sum_{k}\frac{1}{\lambda_k}x_k\nabla_k u$.

Suppose now, that $\O\in\B(\HH)$ is a Borel set. Then we have the following

\begin{deff}
Given $\lambda\in\R$, we say that $u\in H_0(\O)=W^{1,2}_0(\O)$ is a weak solution of the equation
$$\begin{cases}
-Tr[\nabla^2 u]-\<Q^{-1}x,\nabla u\>=\lambda u,\\
u\in H_0(\O),
\end{cases}$$
if for each $v\in W^{1,2}_0(\O)$, we have
$$\int_{\HH}\<\nabla u,\nabla v\>_{\HH}\,d\mu=\lambda \int_{\HH}uvd\mu.$$
\end{deff}

By a general theorem (see \cite{davies}), we know that there is a self-adjoint operator $A$ on $L^2(\O,\mu)$ such that for each $u,v\in Dom(A)\subset W^{1,2}_0(\O)$,
$$=\int_{\HH}Au\cdot vd\mu=\int_{\HH}\<\nabla u,\nabla v\>_{\HH}d\mu.$$
Then, by the compactness of the embedding $W^{1,2}_0(\O)\hookrightarrow L^2(\mu)$, $A$ is a positive operator with compact resolvent. Keeping in mind the construction of $A$, we will write 
$$A=-Tr[\nabla^2]-\<Q^{-1}x,\nabla\>.$$
The spectrum of $-Tr[\nabla^2]-\<Q^{-1}x,\nabla\>$ is discrete and consists of positive eigenvalues $0\le\lambda_1(\O)\le\lambda_2(\O)\le\dots$ for which the variational formulation \eqref{s6e1} holds, i.e. $\lambda_k(\O)=\lambda_k(\O;H)$. Moreover we set $\lambda(\O):=\lambda(\O;W^{1,2}(\HH))$ as defined in \eqref{s6e3}.

\begin{teo}
Suppose that $\HH$ is a separable Hilbert space with Gaussian measure $\mu$. Then, for any $0\le c\le1$, the following optimization problem has a solution:
$$\min\Big\{\Phi(\lambda(\O))\ :\ \O\subset X\hbox{ quasi-open, }\mu(\O)=c\Big\},$$
where $\Phi:[0,+\infty]\to\R$ is a functional satisfying the conditions $(\Phi1)$ and $(\Phi2)$ from Section \ref{s4}.
\end{teo}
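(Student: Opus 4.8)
The plan is to realize this statement as a direct application of the general existence result Theorem \ref{main} to the metric measure space $(\HH,\|\cdot\|_\HH,\mu)$, with $H=W^{1,2}(\HH)$ and the (nonlinear) gradient operator $Du=|\nabla u|_\HH$. Once the abstract hypotheses $H1$, $H2$, $D1$, $D2$, $D3$, $D4$, $\HH1$, $\HH2$, $\HH3$, $\HH4$ are verified for this concrete triple, Theorem \ref{main} produces a quasi-open minimizer for the constraint $\mu(\O)\le c$, and the only additional work is to upgrade the inequality constraint to the equality $\mu(\O)=c$.

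First I would dispatch the structural conditions. Properties $D1$, $D2$, $D3$ are immediate from $Du=|\nabla u|_\HH$ together with the linearity of $\nabla$ and the triangle inequality in $\HH$; the lattice conditions $H1$, $H2$ and the locality rule $D4$ follow from the chain rule for Gaussian Sobolev functions, which gives the usual truncation and $\vee/\wedge$ formulas for the components $\nabla_k u$. Completeness $\HH1$ holds by construction, since $W^{1,2}(\HH)$ is defined as the domain of the closure of $\nabla$ and is therefore a Hilbert space for $\|\cdot\|_H$. Density $\HH4$ is likewise built into the definition: the exponential functions spanning $\EE(\HH)$ are smooth, hence continuous, and dense in $W^{1,2}(\HH)$. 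Finally, $\HH3$ is the standard weak lower semicontinuity of the Dirichlet integral: a sequence bounded in $H$ has, along a subsequence, weakly convergent gradients in $L^2(\HH,\mu;\HH)$, and the $L^2$ norm is weakly lower semicontinuous.

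The heart of the matter, and the step I expect to be the main obstacle, is the compact embedding $\HH2$, that is, the compactness of $W^{1,2}(\HH)\hookrightarrow L^2(\HH,\mu)$; this is exactly where the trace-class hypothesis on $Q$ is used. The same hypothesis is what makes $\mu=N_Q$ a genuine measure on $\HH$, and it yields $\sum_k\lambda_k<\infty$, hence $\lambda_k\to0$. I would prove compactness by diagonalizing the operator $A=-\mathrm{Tr}[\nabla^2]-\<Q^{-1}x,\nabla\>$ associated with the Dirichlet form. Since $\mu$ factors as the product of the one-dimensional Gaussians $N(0,\lambda_k)$ in the coordinates $x_k=\<x,e_k\>_\HH$, the operator $A$ splits into a sum of scaled one-dimensional Ornstein--Uhlenbeck operators, whose eigenfunctions are Hermite polynomials; the joint eigenfunctions are their tensor products, indexed by multi-indices $(n_k)$ with finitely many nonzero entries, with eigenvalue $\sum_k n_k/\lambda_k$. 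The decisive observation is that $\lambda_k\to0$ forces $1/\lambda_k\to\infty$, so for each $R>0$ the inequality $\sum_k n_k/\lambda_k\le R$ compels $n_k=0$ for all but the finitely many indices with $\lambda_k\ge 1/R$ and leaves only finitely many admissible values for those; hence only finitely many eigenvalues lie below $R$. Thus the spectrum of $A$ is discrete with finite multiplicities and accumulates only at $+\infty$, so $A$ has compact resolvent and the embedding is compact. I expect this to be the delicate point, since in infinite dimensions the plain number operator is not compact, and compactness here relies crucially on the decay $\lambda_k\to0$.

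With all hypotheses in force, Theorem \ref{main} yields a quasi-open solution $\O$ of the relaxed problem under $\mu(\O)\le c$. To recover the equality constraint, I would invoke the monotonicity in $(\Phi2)$ together with the fact that enlarging a quasi-open set enlarges $\tilde H_0$ and hence lowers every $\tilde\lambda_k$ (cf. Proposition \ref{s4p2}). If $\mu(\O)<c$, then since $\mu(\HH)=1\ge c$ and $\mu$ is non-atomic one can find a quasi-open $\O'\supset\O$ with $\mu(\O')=c$; monotonicity gives $\Phi(\lambda(\O'))\le\Phi(\lambda(\O))$, so $\O'$ is still a minimizer and now satisfies the constraint with equality. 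This completes the argument.
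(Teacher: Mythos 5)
Your argument is correct and is, at its core, the same as the paper's: the published proof consists of setting $H=W^{1,2}(\HH)$, $Du=\|\nabla u\|_{\HH}$, observing that the hypotheses $H1,\dots,\HH4$ hold, and invoking Theorem \ref{main}, exactly as you propose. Two things differ, both to your credit. First, for the crucial compactness $\HH2$ the paper simply cites \cite[Theorem 9.2.12]{daprato}, whereas you re-derive it by diagonalizing $A=-Tr[\nabla^2]-\<Q^{-1}x,\nabla\>$ on tensorized Hermite polynomials with eigenvalues $\sum_k n_k/\lambda_k$; your counting argument (an eigenvalue $\le R$ forces $n_k=0$ whenever $\lambda_k<1/R$, and only finitely many $k$ survive since $\lambda_k\to0$, each with finitely many admissible $n_k$) is exactly right, and your aside that the plain number operator --- i.e. the Cameron--Martin/Malliavin gradient, with eigenvalues $\sum_k n_k$ of infinite multiplicity --- is \emph{not} compact correctly pinpoints why the flat $\HH$-gradient together with the decay of the covariance spectrum is what makes $\HH2$ true. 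This is essentially the proof behind the citation, so it fills in a reference rather than opening a new route. Second, you address a point the paper's one-line proof passes over in silence: Theorem \ref{main} produces a minimizer under the constraint $m(\O)\le c$, while the statement here requires $\mu(\O)=c$. Your saturation step is sound: enlarging a quasi-open set enlarges $\tilde H_0$, hence decreases every $\tilde\lambda_k$, and $(\Phi2)$ preserves minimality; to justify the existence of a quasi-open $\O'\supset\O$ with $\mu(\O')=c$ a bit more carefully than ``$\mu$ is non-atomic'', take $\O'=\O\cup B_r(0)$ and note that $r\mapsto\mu(\O\cup B_r(0))$ is continuous (spheres are $\mu$-null, since $\|x\|^2$ has absolutely continuous law) and increases from $\mu(\O)$ to $1\ge c$, so the intermediate value theorem provides the right radius, and the union of a quasi-open set with an open ball is quasi-open. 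In short: same strategy as the paper, with a self-contained proof of the cited compact embedding and a small genuine gap of the paper (inequality versus equality constraint) correctly identified and repaired.
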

\begin{proof}
Take $H:=W^{1,2}(\HH)$ and $Du=\|\nabla u\|_{\HH}$. The pair $(H,D)$ satisfies the hypothesis $H1,\dots,\HH3$ and $\HH4$. In fact, the norm $\|u\|^2=\|u\|^2_{L^2}+\|Du\|^2_{L^2}$ is the usual norm in $W^{1,2}(\HH)$ and with this norm $W^{1,2}(\HH)$ is a separable Hilbert space and the inclusion $H\hookrightarrow L^2(\HH,\mu)$ is compact (see \cite[Theorem 9.2.12]{daprato}). Moreover, the continuous functions are dense in $W^{1,2}(\HH)$, by construction. Applying Theorem \ref{main} we obtain the conclusion.
\end{proof}

%%%%%%%%%%%%%%%%%%%%%%%%%%%%%%%%%%%%%%%%%%%%%%%%%%
\subsection{Carnot-Caratheodory spaces}

Consider a bounded open and connected set $\Dr\subset \R^d$ and $C^{\infty}$ vector fields $Y_1,\dots,Y_k$ on $\Dr$. Suppose that the vector fields satisfy the H\"ormander's condition, i.e. the Lie algebra generated by $Y_1,\dots,Y_k$ has dimension $d$ in each point $x\in\O$. Following \cite{smetp} we define a distance on $\Dr$ in the following way:

\begin{deff}
We say that an absolutely continuous curve $\gamma:[a,b]\to\Dr$ is admissible, if there exist measurable functions $c_1,\dots,c_k:[a,b]\to\R$ such that
$$\sum_{j=1}^{k}|c_j(t)|^2\le1\quad\forall t\in[a,b]\quad\hbox{and}\quad\dot{\gamma}(t)=\sum_{j=1}^{k}c_{j}(t)Y(\gamma(t)).$$
The Carnot-Caratheodory distance between $x,y\in\Dr$ with respect to the vector fields $Y_1,\dots,Y_k$ is given by
$$\rho(x,y)=\inf\big\{T>0:\ \exists\gamma:[a,b]\to\Dr\ \hbox{admissibles with}\ \gamma(a)=x,\ \gamma(b)=y\big\}.$$
\end{deff} 

Note that $\rho$ is a distance on $\Dr$ since, in our case, there is always an admissible curve connecting $x$ and $y$. This is a direct consequence from a result due to Sussmann, \cite{sussmann} (for more references and deeper discussion on this topic see \cite{smetp}).

Consider the metric space $(\Dr,\rho)$ equipped with the $d$-dimensional Lebesgue measure $\LL^d$. We define the Sobolev space on $\O$ with respect to the family of vector fields $Y=(Y_1,\dots,Y_k)$ as 
$$W^{1,2}_Y(\Dr)=\{u\in L^2(\Dr):\ Y_ju\in L^2,\ \forall j=1,\dots,k\},$$
$$\|u\|_{1,2}=\left(\|u\|_{L^2}^2+\sum_{j=1}^k \|Y_j u\|_{L^2}^2\right)^{1/2},$$
where the derivation $Y_ju$ is intended in sense of distributions.
For $u\in W^{1,2}_Y(\Dr)$, we define the gradient $Yu=(Y_1u,\dots,Y_ku)$ and set $|Yu|=\left(|Y_1u|^p+\dots+|Y_ku|^{2}\right)^{1/2}\in L^2(\Dr)$.
If $u\in C^{\infty}(\Dr)$, then $|Yu|$ is an upper gradient for $u$ with respect to the distance $\rho$. In fact, if $\gamma$ is a $\rho$-Lipschitz curve, then by Proposition 11.4 of \cite{smetp}, it is admissible and
$$|u(\gamma(b))-u(\gamma(a))|\le\int_a^b\sum_{j=1}^k|c_j(t)(Y_ju)(\gamma(t))|\,dt\le\int_{a}^b |(Yu)(\gamma(t))|dt.$$
Setting $Du=|Yu|$ and $H=W^{1,2}_Y(\Dr)$, we can define the energy $E(\O):=E(\O;H)$ and the spectrum $\lambda(\O)=\lambda(\O;H)$ as in \eqref{s6e1} and \eqref{s6e3}. Below, we obtain an existence result for the functionals of the type $\Phi(\lambda(\O))$, simply by applying Corollary \ref{corcp}. To prove that we are really in the setting of Corollary \ref{corcp}, we start by noting that the set $W^{1,2}_Y(\Dr)\cap C^{\infty}(\O)$ is dense in $u\in W^{1,2}_Y(\Dr)$ (see \cite[Theorem 11.9]{smetp}). Thus, we have that $W^{1,2}_Y(\Dr)$ is a subset of the Cheeger space $H^{1,2}(\Dr,\rho,\lambda)$ and that $|Yu|$ is a weak upper gradient for $u$. In \cite[Theorem 11.7]{smetp} it was shown that it is, actually, the least upper gradient of $u$. By the result of Nagel, Stein and Wainger (see \cite{nastwa}), the Lebesgue measure is doubling with respect to the distance $\rho$. Moreover, the weak Poincar\`e inequality holds on the space $(\Dr,\rho,\LL^d)$ (see \cite{smetp}). Thus we can apply Corollary \ref{corcp}, obtained for metric measure spaces, in the setting of the Carnot-Caratheodory spaces:

\begin{teo}
Consider a family $Y=(Y_1,\dots,Y_k)$ of $C^{\infty}$ vector fields defined on an open neighborhood of the closure of the open connected set $\O\subset\R^d$. If $Y_1,\dots,Y_k$ satisfy the H\"ormander condition. If $\Dr$ is of finite Lebesgue measure and has finite diameter with respect to the Carnot-Caratheodory distance, then for any $0\le c\le|\Dr|)$, the following shape optimization problems admit solutions:
$$\min\big\{\Phi(\lambda(\O))\ :\ \O\subset\Dr\ \O\hbox{ quasi-open, }|\O|\le c\big\},$$
$$\min\big\{E(\O):A\subset\Dr\ \O\hbox{ quasi-open, }|\O|\le c\big\},$$
where $\Phi:[0,+\infty]^\N\to\R$ satisfies the assumptions $(\Phi_1)$ and $(\Phi_2)$ from Section \ref{s4}, $\lambda(\Dr)=\lambda(\Dr;W^{1,2}_Y)$ and $E(\Dr)$ are as in \eqref{s6e1} and \eqref{s6e3}.
\end{teo}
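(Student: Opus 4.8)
The plan is to show that the Carnot--Carath\'eodory setting fits verbatim into the metric-measure framework of Corollary \ref{corcp}, so that the existence of minimizers is an immediate consequence of that result. The data to be matched are the separable metric space $(\Dr,\rho)$, the finite measure $\LL^d$, the Sobolev space $H=W^{1,2}_Y(\Dr)$ and the nonlinear gradient $Du=|Yu|$. Concretely I would verify, in order: that $(\Dr,\rho)$ is a separable space of finite diameter; that $W^{1,2}_Y(\Dr)$ is precisely the Cheeger space $H^1(\Dr,\rho,\LL^d)$ with $|Yu|=g_u$; that $\LL^d$ is doubling for $\rho$ and supports a weak Poincar\'e inequality; and finally, that the resulting compact embedding lets Corollary \ref{corcp} apply.

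The first point is where the H\"ormander condition is used: by Sussmann's theorem (\cite{sussmann}) any two points of the connected set $\Dr$ are joined by an admissible curve, so $\rho$ is everywhere finite and is a genuine distance; separability and the finiteness of $\LL^d$ are inherited from $\R^d$, and the finiteness of the $\rho$-diameter is a standing hypothesis.

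The identification in the second point is, I expect, the main obstacle, since it requires reconciling two a priori unrelated definitions: the distributional space $W^{1,2}_Y$, built from the vector-field derivatives $Y_ju$, and the purely metric Cheeger space, built from upper gradients along $\rho$-rectifiable curves. The bridge is that for a smooth $u$ the function $|Yu|$ is an upper gradient of $u$ for $\rho$, a fact that follows from the admissibility of $\rho$-Lipschitz curves (Proposition 11.4 of \cite{smetp}); coupled with the density of $W^{1,2}_Y(\Dr)\cap C^\infty(\Dr)$ in $W^{1,2}_Y(\Dr)$ (\cite[Theorem 11.9]{smetp}) this gives $W^{1,2}_Y(\Dr)\subset H^1(\Dr,\rho,\LL^d)$ together with $g_u\le|Yu|$, while the opposite inequality $g_u=|Yu|$ is the content of \cite[Theorem 11.7]{smetp}. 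As a byproduct this density also furnishes condition $\HH4$. Once this identification is in place, the structural conditions $H1,H2$, $D1,D2,D3,D4$, $\HH1$ and $\HH3$ come for free from the Cheeger theory recalled in Section \ref{s1}.

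For the geometric inputs, the doubling property of $\LL^d$ relative to the Carnot--Carath\'eodory balls is the theorem of Nagel, Stein and Wainger (\cite{nastwa}), and the weak Poincar\'e inequality on $(\Dr,\rho,\LL^d)$ is supplied by \cite{smetp}. These two facts, together with the finiteness of the $\rho$-diameter, yield the compact inclusion $H^1(\Dr,\rho,\LL^d)\hookrightarrow L^2(\Dr,\LL^d)$ demanded by $\HH2$, through the finite-diameter clause of Corollary \ref{corcp} (which in turn rests on the embedding criterion stated in Remark \ref{s4r2}). Having checked all of $H1,H2$, $D1,D2,D3,D4$, $\HH1,\HH2,\HH3,\HH4$, I would conclude by a direct appeal to Corollary \ref{corcp}, which delivers quasi-open minimizers of measure at most $c$ for both $\min\{\Phi(\lambda(\O))\}$ and $\min\{E(\O)\}$.
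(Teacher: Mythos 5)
Your proposal is correct and follows essentially the same route as the paper: both reduce the theorem to Corollary \ref{corcp} by checking that $(\Dr,\rho,\LL^d)$ with $H=W^{1,2}_Y(\Dr)$ and $Du=|Yu|$ fits the metric-measure framework, using the same ingredients in the same roles — Sussmann's theorem \cite{sussmann} to make $\rho$ a genuine distance, the density of smooth functions and the identification of $|Yu|$ as the minimal upper gradient via \cite[Theorems 11.9 and 11.7]{smetp} (with Proposition 11.4 for the upper-gradient inequality), doubling from \cite{nastwa}, and the weak Poincar\'e inequality from \cite{smetp}. The only cosmetic difference is that you phrase the second step as a full identification of $W^{1,2}_Y(\Dr)$ with the Cheeger space, whereas the paper only needs (and only claims) the inclusion with $g_u=|Yu|$, which already suffices since $H$ may be any closed Riesz subspace of $H^1(X,m)$ satisfying the hypotheses.
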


\bigskip
\begin{ack}
The authors wish to thank Luigi Ambrosio and Giuseppe Da Prato for some useful discussions and for reading parts of the paper in the early stages of its preparation. This work is part of the project 2008K7Z249 {\it``Trasporto ottimo di massa, disuguaglianze geometriche e funzionali e applicazioni''} financed by the Italian Ministry of Research.
\end{ack}

%%%%%%%%%%%%%%%%%%%%%%%%%%%%%%%%%%%%%%%%%%%%%%%%%%

\bigskip
{\small
\begin{minipage}[t]{6.9cm}
Giuseppe Buttazzo\\
Dipartimento di Matematica\\
Universit\`a di Pisa\\
Largo B. Pontecorvo, 5\\
56127 Pisa - ITALY\\
{\tt buttazzo@dm.unipi.it}
\end{minipage}
\begin{minipage}[t]{6.9cm}
Bozhidar Velichkov\\
Scuola Normale Superiore di Pisa\\
Piazza dei Cavalieri, 7\\
56126 Pisa - ITALY\\
{\tt b.velichkov@sns.it}
\end{minipage}}

\end{document}